\documentclass{amsart}

\usepackage{amssymb, amsmath, enumerate, amsfonts, amsthm, mathrsfs, url, bm, mathtools,tikz, color, xcolor,subcaption,comment}
\usepackage[backref]{hyperref}
\hypersetup{
	colorlinks,
    linkcolor={blue!60!black},
    citecolor={blue!60!black},
    urlcolor={blue!60!black}
}
\usetikzlibrary{backgrounds,calc}
\usetikzlibrary { decorations.pathmorphing, decorations.pathreplacing, decorations.shapes, }

\usetikzlibrary{graphs,graphs.standard}
\pgfdeclarelayer{bg}
\pgfdeclarelayer{fg}
\pgfsetlayers{bg,main,fg}

\numberwithin{equation}{section}

\newcommand{\ls}{\lesssim}
\newcommand{\bbq}{\mathbb{Q}}
\newcommand{\bbz}{\mathbb{Z}}
\newcommand{\gs}{\gtrsim}

\newcommand{\abs}[1]{\mathopen{}\left| #1\mathclose{}\right|}

\newcommand{\brac}[1]{\mathopen{}\left( #1 \mathclose{}\right)}
\newcommand{\norm}[1]{\mathopen{}\left\| #1\mathclose{}\right\|}

\newcommand{\bbr}{\mathbb{R}}
\newcommand{\bbc}{\mathbb{C}}

\newcommand{\Abs}[1]{\lvert #1\rvert}

\newcommand{\bbf}{\mathbb{F}}
\newtheorem{theorem}{Theorem}[section]

\newtheorem{corollary}[theorem]{Corollary}

\newtheorem{lemma}[theorem]{Lemma}

\title{The sum-product conjecture is false for real numbers}
\author{Thomas F. Bloom}
\address{Department of Mathematics, University of Manchester, Manchester, M13 9PL}
\email{thomas.bloom@manchester.ac.uk}
\author{Will Sawin}
\address{Department of Mathematics, Princeton University, Princeton, NJ 08540}
\email{wsawin@math.princeton.edu}
\author{Carl Schildkraut}
\address{Department of Mathematics, Stanford University, Stanford CA}
\email{carlsch@stanford.edu}
\author{Dmitrii Zhelezov}

\begin{document}
\begin{abstract}
We disprove the sum-product conjecture for real numbers by constructing arbitrarily large $A\subset \bbr$  (whose elements are algebraic integers in a number field of degree $\asymp \log\abs{A}$) such that
\[\max(\abs{A+A},\abs{AA})\leq \abs{A}^{2-c}\]
where $c>0$ is an absolute constant.

We also disprove the many sums and products conjecture by constructing, for any $k\geq 3$, arbitrarily large $A\subset \bbr$ such that
\[\max(\abs{kA},\Abs{A^{(k)}})\leq \abs{A}^{C\frac{\log k}{\log\log k}}\]
for some constant $C>0$. We obtain similar constructions for $p$-adics, finite fields, and function fields in positive characteristic, and also obtain new lower bounds for the number of solutions to linear equations in a multiplicative group, and the number of solutions to the unit equation in sufficiently many variables.
\end{abstract}
\maketitle

\section{Introduction}
Given any finite set $A$ in some ring we define the sum set and product set of~$A$~as 
\[A+A=\{a+b: a,b\in A\}\quad\textrm{and}\quad AA=\{ ab : a,b\in A\}.\]
The sum-product conjecture in a given ring is that at least one of these must grow near-maximally; more precisely
\begin{equation}\label{eq-sp}\max(\abs{A+A},\abs{AA})\geq \abs{A}^{2-o(1)}\end{equation}
(where the $o(1)$ term tends $0$ as $\abs{A}\to \infty$). This is often attributed to Erd\H{o}s and Szemer\'{e}di, who proved the first results in this direction \cite{ErSz83}, but it first appeared in the literature in a paper of Erd\H{o}s in 1976 \cite{Er76} (in which he says he first made the conjecture 18 months earlier). This question makes sense over any ring (although there are obvious complications in finite rings or rings with zero divisors). Erd\H{o}s \cite{Er76} asked this specifically for $\bbz$, $\bbr$, and $\bbc$, although his main interest was for $A\subset \bbz$.

Most proofs in the sum-product literature are geometric and combinatorial, using no number theory, and thus apply to any finite $A\subset \bbr$; the best result achieved in this direction so far is
\[\max(\abs{A+A},\abs{AA})\geq \abs{A}^{\frac{4}{3}+c-o(1)}\quad\textrm{for}\quad A\subset \bbr\]
for some small constant $c>0$. This was proved with $c=0$ by Solymosi \cite{So09} and for some $c>0$ by Konyagin and Shkredov \cite{KoSh16}. The value of $c$ has been improved a number of times since, with the current record of $c=\frac{10}{4407}$ due to Cushman \cite{Cu25}. 

In this paper we prove that the sum-product conjecture \eqref{eq-sp} is false over the reals, by constructing arbitrarily large counterexamples in totally real algebraic number fields of large degree. The degree of these fields tends to infinity as the sets grow (like $\asymp \log n$ for a counterexample of size $n$), and so \eqref{eq-sp} may still be true in number fields of bounded degree (and, in particular, the original setting of $\bbz$).

\begin{theorem}\label{th-headline}
There exists an absolute constant $c>0$ such that there are arbitrarily large finite $A\subset\bbr$ with
\[\max(\abs{A+A},\abs{AA})\leq \abs{A}^{2-c}.\]
\end{theorem}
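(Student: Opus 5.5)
Our plan is to build $A$ inside a totally real number field $K$ of degree $d\asymp \log\abs{A}$, as a set of products of a fixed small family of units, chosen so that $AA$ is small because it lives in a small multiplicative box, and $A+A$ is small because many sums coincide. Concretely, we will fix units $u_1,\dots,u_r\in\mathcal{O}_K^\times$ that are multiplicatively independent, and consider the generalized geometric progression
\[
A=\{u_1^{e_1}\cdots u_r^{e_r} : 0\le e_i< N\}.
\]
Then $\abs{AA}\le (2N)^r\le 2^r\abs{A}$, so $AA$ is far below $\abs{A}^{2-c}$ provided $2^r\le \abs{A}^{1-c}$, which holds easily once $N$ is large or even when $N$ is a fixed constant, say $N=2$. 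With $N$ fixed and $r\to\infty$ the product set bound becomes $\abs{AA}\le 3^r=\abs{A}^{\log 3/\log 2}$, which is already below $\abs{A}^{2-c}$. So the whole problem reduces to forcing many additive coincidences $a+b=a'+b'$ among elements of $A$.

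To control $A+A$ we will use the multiplicative structure. Take $u_i$ to be conjugates (or small perturbations) of a single unit $\varepsilon$ satisfying a short additive relation, e.g.\ $\varepsilon$ with $1+\varepsilon=\eta$ another unit, or more generally an exceptional-unit / unit-equation solution $x_1+\dots+x_m=0$ with many solutions. The key idea: if the unit group contains many solutions to a fixed linear relation, then in the product set $A=\prod_i\{1,\varepsilon_i\}$ (with $\varepsilon_i$ living in distinct coordinates of a compositum $K=K_1\cdots K_r$, each $K_i$ of bounded degree) we get, coordinatewise, $A+A$ contained in $\prod_i$ of sumsets, and an identity in each factor propagates. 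More precisely, we will take $A=B_1B_2\cdots B_r$ where each $B_i\subset K_i$ is a small set of bounded size $s$ with $\abs{B_iB_i}$ and (crucially) an additive energy gain, and use tensor-power: if $A=\prod B_i$ as products of elements from linearly disjoint fields, then $A+A\subseteq \sum$ of products, and we bound $\abs{A+A}$ by the dimension-counting: elements of $A+A$ are sums $\prod b_i+\prod b_i'$. We instead use a bound via $\abs{A+A}\le \abs{\{\prod b_i+\prod b'_i\}}$ and note this count only sees pairs of product vectors, giving $\le \binom{s^r}{2}$ trivially; the gain must come from identities like $b+b'=c+c'$ inside $B_i$ lifting. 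I would therefore pick the base case carefully: a single $B\subset K_0$, $\abs{B}=s$, and set $A$ to be the image of $B^{\otimes r}$ under the diagonal, i.e.\ $A=\{\sigma_1(b_1)\cdots\sigma_r(b_r)\}$, and then exploit that $K$ is a field so sums of two products in which $r-1$ coordinates agree factor: $\prod_{j\ne i}b_j\,(b_i+b_i')$.

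The main obstacle is exactly this: the tensor/product construction gives $\abs{A+A}$ roughly $\abs{A}^2$ times a density of coincidences that decays exponentially in $r$ only if the base set already has $\abs{B+B}<\abs{B}^2$ \emph{and} the count multiplies. Sums of general products do not factor, so I would restrict attention to a subset: use Balog--Szemerédi-type reasoning is unavailable, so instead I will choose $A$ as a random-like subset of $\prod B_i$ weighted toward pairs sharing many coordinates, and bound $\abs{A+A}$ by counting, for each sum, the pattern of differing coordinates; with a fixed fraction $\delta r$ of coordinates allowed to differ and each differing coordinate contributing from $B+B$ with $\abs{B+B}\le s^{2-\gamma}$, I expect $\abs{A+A}\le \abs{A}^{2-c}$ with $c$ depending on $\gamma,\delta$, while $\abs{AA}\le\abs{B B}^r$ stays small. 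Verifying that the sums genuinely collapse (injectivity failures are what we want, but we need an upper bound, which is automatic from the parametrization) and that elements are real (take $K_i$ totally real) completes the plan.
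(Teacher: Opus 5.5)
Your proposal has a genuine gap: it never gives a mechanism that makes $\abs{A+A}$ small, and that is the entire difficulty. The reduction in your first paragraph is fine: units keep $\abs{AA}$ small, so one must force additive coincidences. The tensor-product step meant to supply those coincidences fails. Suppose elements of $A$ are products $b_1\cdots b_r$ with $b_i$ from linearly disjoint fields $K_i$, as in the compositum you describe. Then $a+a'$ involves $B+B$ only when $a,a'$ differ in exactly one coordinate. If they differ on a set $T$ with $\abs{T}\geq 2$, the sum is $\bigl(\prod_{j\notin T}b_j\bigr)\bigl(\prod_{j\in T}b_j+\prod_{j\in T}b_j'\bigr)$, and the second factor has nothing to do with $B+B$. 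In fact, take $\abs{T}\geq 3$ and $b_j/b_j'\notin\mathbb{Q}$ whenever $b_j\neq b_j'$ (e.g.\ units with $b_j\neq\pm b_j'$). Then $a+a'$ is a rank-two tensor in $K_1\otimes\cdots\otimes K_r$ whose decomposition into two pure tensors is unique by Kruskal's theorem, so all such sums are distinct. Nor can you ``weight toward pairs sharing many coordinates'': $A+A$ contains $a+a'$ for \emph{every} pair. Each $a$ has only $O(r^2s^2)$ elements within Hamming distance $2$, so once $\abs{A}$ is large almost all pairs differ in at least three coordinates. Such constructions therefore give $\abs{A+A}\geq(\tfrac12-o(1))\abs{A}^2$, and the bound ``automatic from the parametrization'' is only the trivial one.

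The missing idea is that the paper gets the additive saving not from coincidences among units but from the geometry of numbers. It works in a totally real field $K$ of degree $d$ with $\Delta_K\leq C^d$ (Martinet); the bound $R_K\leq\Delta_K$ then controls the unit lattice too. It takes $A=GP$, where $G=B^\times(Y)$ is a box in the unit lattice and $P=X+B^+(\epsilon X)$ is a box of algebraic integers with all conjugates in $[(1-\epsilon)X,(1+\epsilon)X]$. The unit-separation lemma makes the product direct, $\abs{A}=\abs{G}\abs{P}$. Since $A\subseteq B^+(2Xe^Y)$, you get $\abs{A+A}\leq\abs{B^+(4Xe^Y)}\leq (Ce^YX)^d\leq (C'e^Y)^d\Delta_K^{1/2}\abs{A}$, because $A$ contains a translate of a lattice box of size at least $(\epsilon X)^d\Delta_K^{-1/2}$. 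Meanwhile $\abs{AA}\leq\abs{GG}\abs{P}^2$, with $\abs{GG}\leq O(1)^d\Delta_K\abs{G}$ and $\abs{G}\geq Y^{d-1}d^{-1/2}\Delta_K^{-1}$, so $\abs{AA}\leq c^{-d}Y^{1-d}\Delta_K^2\abs{A}^2$. Taking $Y$ a large constant and then $X$ large gives the power saving, with $\abs{A}$ exponential in $d$. Even in the paper, a pure unit set such as $B^\times(Y)$ only gives $\abs{kA}\leq O(ke^Y)^d$ against $\abs{A}\leq O(Y)^d$. That is useful only for large $k$, so the dense additive box $P$ is what makes $k=2$ work.
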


Our arguments deliver an explicit value of $c$, but this is a tedious calculation, and the value obtained is very small. Since the main interest of this result is the existence of an absolute constant $c>0$, we have chosen to present a non-explicit version of the proof, to better demonstrate the main ideas. In Section~\ref{sec-numerical} we sketch how a version of the proof with explicit constants can deliver $c\geq 0.00000087$ (although this should not be taken too seriously, and can certainly be improved with a little more effort).

Theorem~\ref{th-headline} is easily deduced from the following more general result.
\begin{theorem}\label{th-main}
There exists an absolute constant $C>0$ such that the following holds. There are infinitely many $d$, with accompanying totally real number fields $K$ of degree $d$ over $\bbq$, such that, for any $X\geq 1$, there exists $A\subset \mathcal{O}_K$ with 
\[X^{d} \leq \abs{A}\leq (CX)^{d},\]
\[\abs{A+A}\leq C^{d}\abs{A}\textrm{, and}\quad\abs{AA}\leq 2^{-d}\abs{A}^2.\]
\end{theorem}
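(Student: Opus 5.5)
The plan is to take $A$ to be the lattice points of $\mathcal{O}_K$ lying in a carefully shaped region of Minkowski space. Fix a totally real $K$ of degree $d$ with embeddings $\sigma_1,\dots,\sigma_d$. Via $a\mapsto(\sigma_i(a))_i$, $\mathcal{O}_K$ becomes a lattice in $\bbr^d$ of covolume $\sqrt{D_K}$. I will set
\[A=\Bigl\{a\in\mathcal{O}_K:\ |\sigma_i(a)|\le X\ \forall i,\ \ \sum_{i}\log\frac{|\sigma_i(a)|}{X}\le-(1-\delta)d\Bigr\}.\]
Here $\delta>0$ is a small absolute constant.

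The additive side should be routine. The extra condition is satisfied by a typical point of the box, because $-\log(|x|/X)$ for $x$ uniform in $[-X,X]$ is an $\mathrm{Exp}(1)$ variable with mean $1$. Hence $A$ contains a positive proportion of the lattice points of $[-X,X]^d$, and $|A|\asymp(2X)^d/\sqrt{D_K}$. Since $A+A\subset[-2X,2X]^d$, a lattice-point count gives $|A+A|\le C^d|A|$. For this and the bound $X^d\le|A|\le(CX)^d$ we need $D_K^{1/2d}$ bounded. So I would choose $K$ from an infinite totally real class field tower (Golod--Shafarevich, as in Martinet), which has bounded root discriminant. Such towers exist for infinitely many $d$.

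For the product side, $AA$ lies in the region
\[R=\Bigl\{c:\ |\sigma_i(c)|\le X^2,\ \sum_i\log\frac{|\sigma_i(c)|}{X^2}\le-(2-2\delta)d\Bigr\}.\]
By Cramér's theorem for sums of $\mathrm{Exp}(1)$ variables, $R$ occupies a fraction $e^{-(1-\log 2-o_\delta(1))d}$ of the box $[-X^2,X^2]^d$. Counting lattice points of $R$ would then give
\[|AA|\lesssim\frac{(2X^2)^d e^{-0.3d}}{\sqrt{D_K}}\approx|A|^2\,\frac{\sqrt{D_K}}{2^d}\,e^{-0.3d}.\]
This count needs care, since $R$ has a cusp near the coordinate hyperplanes. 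I would handle it by subdividing dyadically in each coordinate and using the unit group to move pieces into bounded-shape regions.

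The main obstacle is that this bound is too weak. It needs $\sqrt{D_K}^{1/d}<2e^{0.3}$, and Odlyzko's bounds forbid this for totally real fields. So the shape of $A$ must be optimised further, and a box with a single entropy cut will not suffice. My first attempt would replace the one linear constraint on $(\log|\sigma_i(a)|)_i$ by a more restrictive convex region $S$. The region $S+S$ should be thin compared with $[-X,X]^d$ in the $\log$ coordinates, while the Euclidean set $\{x:(\log|x_i|)\in S\}$ keeps a sumset of size $C^d$ times itself. I would also exploit multiplicity in $AA$: every product $ab$ equals $(au)(u^{-1}b)$ for units $u$ whose embeddings lie near $1$, and every $c\in AA$ has many factorisations, so the count of distinct products can fall well below the volume bound. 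The delicate step is balancing the large-deviation saving against the factor $D_K^{1/2}$ so that the final ratio $|AA|/|A|^2$ comes out at most $2^{-d}$. This is where I expect the real difficulty of the theorem to lie.
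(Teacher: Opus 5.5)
Your proposal does not yet prove the product-set bound. The one construction you analyse fails, and what follows are possible repairs that are never carried out.

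As you note, Odlyzko's bound (roughly $\Delta_K^{1/(2d)}\geq 7$ for totally real $K$ of large degree) makes your estimate $\abs{AA}\lesssim\Delta_K^{1/2}2^{-d}e^{-0.3d}\abs{A}^2$ weaker even than the trivial $\abs{AA}\leq\abs{A}^2$. No choice of $\delta$ changes this.

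One of your repairs is impossible as stated. By Lemma~\ref{lem-unitsep}, a unit with $\phi^{-1}<\Abs{\sigma_i(u)}<\phi$ for every $i$ is $\pm1$. So there are no nontrivial ``units whose embeddings lie near $1$'' to produce extra factorisations $ab=(au)(u^{-1}b)$. Unit multiplicities can only help if $A$ is spread over a multiplicative window $[e^{-Y},e^{Y}]$ of large width in every coordinate. In your set, $\log(\Abs{\sigma_i(a)}/X)$ typically varies only by $O(1)$.

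The missing idea is a parameter whose growth increases the exponential rate of the product saving, with the cost falling only on the sum side, where $C^d$ losses are allowed. The paper sets $A=GP$, where:
\begin{itemize}
\item $G=B^\times(Y)$ is the set of units with $\abs{\log\Abs{\sigma_i(u)}}\leq Y$ for all $i$;
\item $P=X+B^+(\epsilon X)$, so every embedding of every element lies in $[(1-\epsilon)X,(1+\epsilon)X]$.
\end{itemize}
Now Lemma~\ref{lem-unitsep} works for you, giving $\abs{A}=\abs{G}\abs{P}$. We have $GG\subseteq B^\times(2Y)$, while $\abs{G}\geq Y^{d-1}d^{-1/2}R_K^{-1}$ and $R_K\leq\Delta_K$ (Lemmas~\ref{lem-regbound} and~\ref{lem-ball-mult}). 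Hence $\abs{AA}\leq\abs{GG}\abs{P}^2\leq c^{-d}Y^{1-d}\Delta_K^2\abs{A}^2$. Since $\Delta_K\leq C^d$, taking $Y$ a large constant beats $\Delta_K^2$ and $2^d$ together. The price is $A\subseteq B^+(2Xe^Y)$, so $\abs{A+A}\leq(C'e^Y)^d\Delta_K^{1/2}\abs{A}$, which is still $O(1)^d\abs{A}$.

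In your language, this corresponds roughly to pushing the cut deep into the large-deviation regime, $\sum_i\log(\Abs{\sigma_i(a)}/X)\leq-\lambda d$ with $\lambda$ a large constant. There the volume heuristic gives $\abs{AA}/\abs{A}^2\approx\Delta_K^{1/2}(e\lambda)^{-d}$. But then everything hinges on rigorous lattice-point counts, uniform in $d$, in a cuspy region of exponentially small relative volume. The product structure $A=GP$ sidesteps this: each count is either Blichfeldt in one of the two lattices or a packing bound from separation. Rescaling $X$ by a constant then gives $X^d\leq\abs{A}\leq(CX)^d$.
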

By choosing an arbitrary embedding of $K$ into $\bbr$, and $X=C^{1/\epsilon}$, we deduce the following, which provides examples in which the sum set is very small, and yet there is still a power saving on the size of the product set.
\begin{corollary}\label{cor-main}
There exists an absolute constant $c>0$ such that the following holds. For any $\epsilon\in (0,1)$ there are arbitrarily large $A\subset \bbr$ with
\[\abs{A+A}\leq \abs{A}^{1+\epsilon}\textrm{ and }\abs{AA}\leq \abs{A}^{2-c\epsilon}.\]
\end{corollary}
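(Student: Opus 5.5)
We deduce Corollary~\ref{cor-main} from Theorem~\ref{th-main} by tuning $X$ against the constant $C$. Let $C$ be the constant of Theorem~\ref{th-main}, which we may assume satisfies $C\geq 2$. Given $\epsilon\in(0,1)$, we take one of the infinitely many admissible degrees $d$ and its field $K$, fix a real embedding $K\hookrightarrow\bbr$, and apply Theorem~\ref{th-main} with $X=C^{1/\epsilon}\geq 1$. This gives $A\subset\mathcal{O}_K\subset\bbr$ (injectivity of the embedding preserves all cardinalities) with $\abs{A}\geq X^d=C^{d/\epsilon}$. Since $d$ ranges over an infinite set, $\abs{A}$ is arbitrarily large.

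For the sum set we have
\[\abs{A+A}\leq C^d\abs{A}=(X^d)^{\epsilon}\abs{A}\leq \abs{A}^{1+\epsilon},\]
using $X^d\leq\abs{A}$.

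For the product set we use the upper bound $\abs{A}\leq (CX)^d=C^{d(1+1/\epsilon)}$. This gives $\log\abs{A}\leq d\log C\cdot\frac{1+\epsilon}{\epsilon}\leq \frac{2d\log C}{\epsilon}$, and hence
\[2^{-d}\leq \abs{A}^{-\frac{\epsilon\log 2}{2\log C}}.\]
Therefore $\abs{AA}\leq 2^{-d}\abs{A}^2\leq \abs{A}^{2-c\epsilon}$ with $c=\frac{\log 2}{2\log C}$. This constant is absolute and independent of $\epsilon$.

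None of these steps is a real obstacle. The only point needing care is that the saving exponent must be measured against the upper bound for $\abs{A}$, not the lower bound, so that it comes out linear in $\epsilon$.
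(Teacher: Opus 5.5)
Your proposal is correct and follows the paper's own route exactly: fix a real embedding of $K$, apply Theorem~\ref{th-main} with $X=C^{1/\epsilon}$, and read off the sumset bound from $\abs{A}\geq X^d$ and the product-set saving from $\abs{A}\leq (CX)^d$. Your explicit constant $c=\frac{\log 2}{2\log C}$ supplies the short calculation the paper leaves implicit.
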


Theorem~\ref{th-headline} is an immediate consequence. This result should also be compared to the lower bound of Solymosi \cite{So09}, who proved that for any $A\subset \bbr$
\begin{equation}\label{eq-sol}
\abs{A+A}^2\abs{AA}\geq \abs{A}^{4-o(1)}.
\end{equation}

Using similar ideas we also obtain new lower bounds in a number of other related problems of a sum-product flavour, which we summarise below.

\subsection{Many sums and products}
Erd\H{o}s \cite{Er77} also made the stronger conjecture that, for any $k\geq 2$ and $\epsilon>0$,
\[\max(\abs{kA},\Abs{A^{(k)}}) \gg_{k,\epsilon}\abs{A}^{k-\epsilon},\]
where $kA$ and $A^{(k)}$ denote the $k$-fold sum set and $k$-fold product set, respectively. (Once again this was for $A\subset \bbz$ originally, but Erd\H{o}s and Szemer\'{e}di \cite{ErSz83} asked it also for $A\subset \bbr$). Our methods provide a strong counterexample to this conjecture for large $k$.
\begin{theorem}\label{th-mainmult} 
There exists an absolute constant $C>0$ such that for any fixed $k\geq 3$ there exist arbitrarily large $A\subset \bbr$ with
\[\max(\abs{kA},\Abs{A^{(k)}})\leq \abs{A}^{C\frac{\log k}{\log\log k}}.\]
Furthermore, for any fixed $\epsilon\in (0,1)$, there exist arbitrarily large $A\subset \bbr$ such that
\[\max(\abs{kA},\Abs{A^{(k)}})\leq \abs{A}^{C^{1/\epsilon}+\epsilon \log k}\textrm{ for all }k\geq 3.\]
\end{theorem}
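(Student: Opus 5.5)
The plan is to work inside a totally real field $K$ of degree $d$, with $d\to\infty$, in which both additive and multiplicative structure can be controlled by lattice-point counting. Additively, $\mathcal{O}_K$ embeds via the $d$ real embeddings $\sigma_1,\dots,\sigma_d$ as a lattice in $\bbr^d$. Multiplicatively, I take a finite set $S$ of prime ideals of small norm and the group $G$ of $S$-units. The logarithmic map
\[x\mapsto \big(\log\abs{\sigma_i(x)}\big)_{i}\times\big(v_{\mathfrak p}(x)\big)_{\mathfrak p\in S}\]
embeds $G/\{\pm1\}$ as a lattice $\Lambda$ of rank $r=d-1+\abs{S}$. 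I take $A$ to be the set of $x\in G$ with $1\leq\sigma_i(x)\leq X$ for all $i$, essentially the construction underlying Theorem~\ref{th-main}, so I can assume from there that such $K$, $S$, $X$ exist with $\abs{A}\geq X^{d}$ (up to $C^d$).

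\emph{Sums.} Every element of $kA$ is an algebraic integer with all conjugates in $[k,kX]$. Hence $\abs{kA}$ is at most the number of lattice points of $\mathcal{O}_K$ in a box of volume $(kX)^d$, giving $\abs{kA}\leq (CkX)^d/\sqrt{\abs{\Delta_K}}\ll (Ck)^d\abs{A}$. This uses the lower bound $\abs{A}\geq X^d(\text{covolume})^{-1}C^{-d}$, which I would take from the proof of Theorem~\ref{th-main}.

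\emph{Products.} $A^{(k)}\subset G$, and its image in $\Lambda$ lies in the $k$-fold dilate of the (convex) region $R$ cut out by $0\leq\log\sigma_i\leq\log X$ together with $0\leq v_{\mathfrak p}\leq \log_{N\mathfrak p}X^{d}$. The number of points of $\Lambda$ in $kR$ is at most roughly $k^{r}$ times the number in a slightly fattened $R$. Comparing with $\abs{A}$ (a fixed proportion of $\Lambda\cap R$, again from the construction) gives $\Abs{A^{(k)}}\leq (Ck)^{r}\abs{A}$.

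\emph{Choice of parameters.} Put $r\approx d$, since $\abs{S}\ll d$ in the construction. Then
\[\max(\abs{kA},\Abs{A^{(k)}})\leq (Ck)^{O(d)}\abs{A}\leq \abs{A}^{1+O(\log k/\log X)}.\]
With $\abs{A}\approx X^d$, choosing $\log X\approx \log\log k$ gives the first bound with exponent $O(\log k/\log\log k)$. This requires Theorem~\ref{th-main}'s construction to work with $X$ as small as about $\log k$; if it needs $X$ at least some absolute constant, that is fine since $k\geq3$. For the uniform-in-$k$ statement I fix $X=C^{1/\epsilon}$, so $(Ck)^{O(d)}\leq \abs{A}^{\epsilon\log k+O(\epsilon)}$. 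The residual $C^{1/\epsilon}$ in the exponent absorbs the loss from $\abs{A}$ being only $X^{d}C^{-d}$ rather than $X^d$.

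\emph{Main obstacle.} The hard step is the lower bound $\abs{A}\gtrsim X^d C^{-d}$, namely that a positive-proportion (up to $C^{-d}$) fraction of the integers in the box are $S$-units. I do not expect to prove this directly. I would extract it from the construction in Theorem~\ref{th-main}, where the product-set saving already forces $A$ to be multiplicatively structured. Concretely, I would choose $K$ as a compositum of many real quadratic fields, so that $d=2^m$ and many small rational primes split completely, giving small-norm primes to put in $S$. I would then run a pigeonhole argument over the $S$-unit lattice showing that translates of the box in log-space capture about $X^d C^{-d}$ points of $\Lambda$. If only an averaged version is available (some translate $uR$ with $u\in G$), that suffices: replace $A$ by $u^{-1}A$, which leaves both counts unchanged.
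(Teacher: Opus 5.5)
There is a genuine gap at the step you flag as the main obstacle, and it cannot be repaired. The lower bound $\abs{A}\gtrsim X^dC^{-d}$ is false for $A=\{x\in G: 1\leq\sigma_i(x)\leq X\}$ once $X$ is large.

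Two elements of $A$ with the same valuation vector $v\in\mathbb{Z}_{\geq 0}^S$ generate the same ideal. So they differ by a unit lying in $B^\times(\log X)$, and by Lemma~\ref{lem-ball-mult} there are at most $10(5\log X+1)^{d-1}$ elements of $A$ for each $v$. Since $\sum_{\mathfrak p}v_{\mathfrak p}\log N\mathfrak p=\log N(x)\leq d\log X$, there are at most $\binom{O(d\log X)+\abs{S}}{\abs{S}}$ possible $v$. With $\abs{S}\ll d$ this gives $\abs{A}\leq (C\log X)^{O(d)}$, far below $(X/C)^d$.

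You have conflated the volume of the box in the additive (Minkowski) picture, which is $X^d$, with its volume in the logarithmic picture, which is only $(\log X)^{O(d)}$. No pigeonholing over translates in log-space can bridge that. Nor can Theorem~\ref{th-main} supply the bound. There $A=GP$ with $P$ an additive box, so it is not a set of $S$-units, and its product-set saving is only a factor $2^{-d}$, which says nothing about $A^{(k)}$.

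In fact no construction can satisfy your intermediate bounds. Suppose $\abs{kA},\Abs{A^{(k)}}\leq (Ck)^{O(d)}\abs{A}$ with $\abs{A}\geq (X/C)^d$ and $X\to\infty$. Then $\max(\abs{A+A},\abs{AA})\leq\max(\abs{3A},\Abs{A^{(3)}})\leq\abs{A}^{1+o(1)}$, contradicting \eqref{eq-sol}.

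The correct bookkeeping, which is what the paper does, needs no primes at all. Take $A=B^\times(Y)$, a \emph{symmetric} box in the unit lattice. Your one-sided box $1\leq\sigma_i\leq X$ contains no unit other than $1$, since units have norm $\pm 1$; presumably that is why you needed $S$. The three estimates are then:
\begin{enumerate}
\item Lemmas~\ref{lem-regbound} and~\ref{lem-ball-mult} give $\abs{A}\geq Y^{d-1}d^{-1/2}\Delta_K^{-1}\geq Y^{d-1}d^{-1/2}C^{-d}$.
\item $A^{(k)}\subseteq B^\times(kY)$ gives $\Abs{A^{(k)}}\leq (CkY)^{d}$.
\item $A\subseteq B^+(e^Y)$ gives $kA\subseteq B^+(ke^Y)$, so $\abs{kA}\leq (Cke^Y)^d$.
\end{enumerate}
Thus $\abs{A}$ is only polynomial in $Y=\log X$, while $kA$ fills an additive box of side $ke^Y$. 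The exponent is roughly $(Y+\log k)/\log Y$, not $1+O(\log k/\log X)$.

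Taking $Y=(C')^{1/\epsilon}$ gives the second statement. Taking $\epsilon\asymp 1/\log\log k$, so that $Y$ is a fixed power of $\log k$ at most $\log k$, gives the first. Your choice $\log X\approx\log\log k$ would, with the correct size of $A$, only give exponent about $\log k/\log\log\log k$.
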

This is likely the best possible dependence on $k$ in the exponent (at least for $k$ fixed and $\abs{A}\to \infty$). It follows from work of Mudgal \cite{Mu24} and the recent resolution of the weak polynomial Freiman--Ruzsa conjecture by Gowers, Green, Manners, and Tao \cite{GGMT25} that, for any $A\subset \bbr$, for all $k\geq 3$,
\[\max(\abs{kA},\Abs{A^{(k)}})\geq\abs{A}^{(\log k)^c}\]
for some absolute constant $c>0$. With sufficiently improved bounds on the number of solutions to linear equations in multiplicative groups (as discussed in Section~\ref{sec-linear}) this exponent can likely be improved to $\frac{\log k}{\log\log k}$. This exact quantitative dependence was achieved for $A\subset \bbz$ by P\'{a}lv\"{o}lgyi and Zhelezov \cite{PaZh21}. In a similar vein, Konyagin \cite{Ko14} proved that if $A\subset \bbc$ is a finite set such that $\abs{AA}\leq \abs{A}^{1+O(1/k)}$ then $\abs{kA}\geq \abs{A}^{c\log k}$ for some constant $c>0$. 

\subsection{Linear equations in a multiplicative group}
Another application of our construction is to provide new lower bounds for the number of solutions to linear equations in a multiplicative group.
\begin{theorem}\label{th-linearmain}
There is an absolute constant $C>1$ such that the following holds. Let $k\geq C$ be any integer. There exist infinitely many $d\geq 2$ and multiplicative groups $\Gamma\leq\bbr^\times$ of rank $\leq d$ such that there are at least
\[\geq (Ck)^{kd}\]
solutions to
\[x_1+\cdots+x_k=1\]
with $x_i\in \Gamma$ and $x_i>0$ for $1\leq i\leq k$.
\end{theorem}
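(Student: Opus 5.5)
The plan is to derive Theorem~\ref{th-linearmain} from the additive half of Theorem~\ref{th-main}, with a pigeonhole argument over $k$-fold sums. Take a field $K$ of degree $d$ and a set $A\subset\mathcal{O}_K$ as in Theorem~\ref{th-main}, with a parameter $X$ to be fixed later (polynomial in $k$). Fix a real embedding of $K$.

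The key structural input, which I must extract from the construction behind Theorem~\ref{th-main} rather than from its statement, is that $A$ lies in a multiplicative group $\Gamma_0\le K^\times$ of rank at most $d-1$, after discarding a positive proportion and dividing by a common element if needed. The smallness of $AA$ suggests the construction does give this: $A$ should consist of products of powers of a bounded number of multiplicatively independent generators. By passing to a subset of size $\geq 2^{-d}\abs{A}$ and, if necessary, replacing $A$ by $-A$ coordinatewise, I will also assume every element of $A$ is positive in the chosen embedding. Neither operation damages the bound $\abs{A+A}\le C^d\abs{A}$ by more than a factor $2^{O(d)}$, by Ruzsa's covering or triangle inequalities.

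Next I use Plünnecke--Ruzsa: from $\abs{A+A}\le C^d\abs{A}$ we get $\abs{kA}\le C^{kd}\abs{A}$. There are $\abs{A}^k$ ordered tuples $(a_1,\ldots,a_k)\in A^k$, and their sums all lie in $kA$. So by pigeonhole some $s\in kA$ has at least
\[\abs{A}^{k}/\abs{kA}\ge C^{-kd}\abs{A}^{k-1}\ge C^{-kd}X^{d(k-1)}\]
representations $s=a_1+\cdots+a_k$. Since all $a_i>0$, we have $s>0$, and $x_i=a_i/s>0$ gives a solution of $x_1+\cdots+x_k=1$. Distinct tuples give distinct solutions. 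All the $x_i$ lie in $\Gamma=\langle \Gamma_0, s\rangle$, which has rank at most $d$.

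Finally I choose $X=(C_1k)^{2}$ with $C_1$ large. Since $k\ge 2$ gives $(k-1)/k\ge 1/2$, we have $X^{d(k-1)}\ge (C_1k)^{kd}$. Hence the count is at least $(C_1k/C)^{kd}$, which is $\ge (C'k)^{kd}$ after renaming constants. The statement requires a single absolute constant appearing in both $k\ge C$ and $(Ck)^{k d}$; this can be arranged by enlarging $C_1$.

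The main obstacle is the rank condition. Theorem~\ref{th-main} as stated says nothing about $A$ living in a low-rank multiplicative group, so I would have to open up its proof. If the construction's $A$ only sits in a group of rank $O(d)$ rather than $\le d-1$, I would try two fixes. One is to rescale $d$: pass to a field of degree $d'=O(d)$ and absorb the loss into the constant $C$ in the exponent $kd$. The other is a multiplicative pigeonhole, fixing the exponents of all but $d-1$ generators on a dense subfamily of tuples, at a cost of only $C^{O(kd)}$.
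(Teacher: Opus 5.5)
There is a genuine gap, and it is exactly the step you flagged. The set $A=GP$ behind Theorem~\ref{th-main} does not lie in a multiplicative group of rank $O(d)$, even after trimming to a $2^{-d}$ proportion and dividing by a common element. Only the factor $G=B^\times(Y)$ lives in the unit group, which has rank $d-1$. The factor $P=X+B^+(\epsilon X)$ is a box of essentially arbitrary non-unit algebraic integers of norm about $X^d$, involving a huge number of distinct prime ideals.

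Your proposed repairs do not survive a quantitative check.
\begin{itemize}
\item Every $a\in A$ is an algebraic integer with $\abs{\sigma(a)}\le 2Xe^Y$ for all $\sigma$, so its absolute Weil height is at most $\log(2Xe^Y)$.
\item Schinzel's bound for totally real numbers gives $h(a/a')\ge\frac12\log\phi$ whenever $a\neq\pm a'$.
\item A packing argument in the logarithmic embedding, as in Lemma~\ref{lem-ball-mult}, then shows that any coset of a subgroup of $\bbr^\times$ of rank $\le d$ contains at most $(O(\log X+Y))^{d}$ elements of $A$. Once $X$ is large this is an exponentially small fraction of $\abs{A}\ge X^d$.
\end{itemize}
In your argument all entries of all solutions lie in the single coset $s\Gamma$. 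So you can obtain at most $(O(\log X+Y))^{(k-1)d}$ solutions. For your choice $X=(C_1k)^2$ with $Y$ fixed, this is $(O(\log k))^{kd}$, far below the target. The large count $C^{-kd}X^{d(k-1)}$ you derived comes from tuples whose entries are multiplicatively unrelated. No pigeonholing on exponents at cost $C^{O(kd)}$ recovers it, and changing the degree does not touch the issue.

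The missing idea is to discard $P$ entirely, which is what the paper does.
\begin{itemize}
\item Take $A$ to be the positive elements of $B^\times(Y)$, as in Lemma~\ref{lem-mainmult}. This set lies in the rank-$(d-1)$ unit group by construction, and $\abs{A}\ge (cY)^{d-1}$ by Theorem~\ref{th-tower}, Lemma~\ref{lem-regbound} and Lemma~\ref{lem-ball-mult}.
\item Bound the $k$-fold sumset directly, not via Pl\"unnecke--Ruzsa. Since $B^\times(Y)\subseteq B^+(e^Y)$, we have $kA\subseteq B^+(ke^Y)$, and Lemma~\ref{lem-ball} gives $\abs{kA}\le(3ke^Y)^d$. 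Going through $\abs{A+A}$ would be useless here: for $Y\asymp k$ we have $\abs{A}^2\le(5e^Y)^d$, so the available bound $\abs{A+A}\le\abs{B^+(2e^Y)}$ is weaker than trivial. The saving is a genuinely $k$-fold phenomenon.
\item Pigeonholing gives a sum $x$ with at least $Y^{-k}\bigl((cY)^k/(Cke^Y)\bigr)^d$ representations. Dividing by $x$ adds one generator, and positivity of the $a_i$ gives $x_i>0$.
\item Finally take $Y=k$.
\end{itemize}
So the parameter that must grow with $k$ is the multiplicative radius $Y$, which makes the heights of order $k$ as the packing bound above requires. It is not the additive radius $X$. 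This yields $(ck)^{kd}$ solutions for a small absolute constant $c$, which is the form $\exp((C^{-1}k\log k)d)$ proved in Section~\ref{sec-linear}.
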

In particular this shows that the dependence on $d$ in the corresponding upper bound of Evertse, Schlickewei, and Schmidt \cite{ESS02} is the best possible (for subgroups of $\bbr^\times$). Similarly, we produce a new lower bound for the number of solutions to the unit equation $x_1+\cdots+x_k=1$ with $x_i\in \mathcal{O}_K^\times$, provided $k$ is sufficiently large (in absolute terms).
\begin{theorem}\label{th-unitlower}
There exists an integer $k\geq 2$ and an absolute constant $C>1$ such that, for infinitely many $d$, there exists a number field $K$ of degree $d$ such that the equation
\[x_1+\cdots+x_k=1\]
has at least $C^d$ many solutions with $x_i\in \mathcal{O}_K^\times$.
\end{theorem}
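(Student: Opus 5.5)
The plan is to manufacture solutions by pigeonholing equal sums of many small units in a field with a large unit group of bounded height. If $u_1+\cdots+u_j=u_{j+1}+\cdots+u_{2j}$ with all $u_i\in\mathcal{O}_K^\times$, dividing by $u_{2j}$ and absorbing signs into the unit $-1$ gives a solution of $x_1+\cdots+x_k=1$ with $k=2j-1$ and $x_i\in\mathcal{O}_K^\times$. Since each solution arises from at most $\abs{\mathcal{O}_K^\times\cap\{\text{small}\}}$ such coincidences, it suffices to produce exponentially many coincidences.

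\emph{Step 1 (the fields).} I would take an infinite tower of totally real number fields $K$ of degree $d\to\infty$ with bounded root discriminant, $\abs{\mathrm{disc}\,K}^{1/d}\le \delta$. Such towers exist by Golod--Shafarevich (Martinet's totally real examples). By the standard bound $h_KR_K\ll \abs{\mathrm{disc}\,K}^{1/2}(c\log\abs{\mathrm{disc}\,K})^{d}$ and $h_K\ge1$, we get $R_K^{1/d}\le \rho$ for an absolute $\rho$.

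\emph{Step 2 (many small units).} The log-embedding maps $\mathcal{O}_K^\times/\{\pm1\}$ to a lattice of rank $d-1$ and covolume $\asymp\sqrt d\,R_K$ in the trace-zero hyperplane. Fix a large $B$ and count units all of whose conjugates satisfy $\abs{\sigma(u)}\le B$, i.e.\ lattice points in a box of side $2\log B$. A covering/averaging argument (translating the box and using that it is convex with volume $\gg(2\log B)^{d-1}$, with a loss of at most $C_0^d$) gives at least
\[N\ge \brac{\frac{c_1\log B}{\rho}}^{d}\]
such units. Making this step rigorous is the main obstacle: lattice point counts in a box for an arbitrary lattice need care, since the lattice could be very skew. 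I would first try to handle this by the pigeonhole version. Partition the box $[-\log B,\log B]^{d}$ into translates of a fundamental domain scaled down. Equivalently, note that the number of lattice points in $x+\text{box}$, averaged over $x$, equals $\mathrm{vol}/\mathrm{covol}$. Then choose a good translate $x$ and take differences of its points, which land in the doubled box. This gives $N$ as above, with $B$ replaced by $B^2$.

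\emph{Step 3 (pigeonhole).} All sums $u_1+\cdots+u_j$ of such units lie in $\mathcal{O}_K$ with every conjugate of absolute value at most $jB^2$. By Minkowski the number of such integers is at most $(c_2jB^2)^d/\sqrt{\abs{\mathrm{disc}\,K}}\le (c_2jB^2)^d$. Cauchy--Schwarz then gives at least $N^{2j}/(c_2jB^2)^d$ solutions of $u_1+\cdots+u_j=u_{j+1}+\cdots+u_{2j}$. Discarding degenerate ones is harmless, and in any case they still give valid unit equations after division. Dividing by $u_{2j}$ loses a factor at most $N$, so we obtain at least
\[\frac{N^{2j-1}}{(c_2jB^2)^d}\ge\brac{\frac{(c_1\log B/\rho)^{2j-1}}{c_2jB^2}}^{d}\]
solutions with $k=2j-1$.

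\emph{Step 4 (choosing parameters).} First choose $B$ so that $c_1\log B/\rho\ge 2$. Then choose $j$ large enough that $2^{2j-1}\ge 2c_2jB^2$, which is possible since the left side is exponential in $j$ and the right side is linear. This yields at least $2^d$ solutions for every field in the tower. That gives the theorem with $k=2j-1$ absolute and $C=2$, and $d$ ranging over the infinitely many degrees in the tower.
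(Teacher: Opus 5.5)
Your strategy is the paper's: units of bounded height in a Martinet field, $j$-fold sums confined to an additive box of size $O(jB^2)^d$, Cauchy--Schwarz to get many coincidences $u_1+\cdots+u_j=u_{j+1}+\cdots+u_{2j}$, then division by one unit with signs absorbed by $-1$. Like the paper, you fix the height first and the number of variables second. Your Step 2 argument (averaging over translates, then taking differences) is exactly the proof of Lemma~\ref{lem-ball-mult}.

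The step that fails as written is the regulator bound in Step 1. From $h_KR_K\ll\abs{\mathrm{disc}\,K}^{1/2}(c\log\abs{\mathrm{disc}\,K})^{d}$ you cannot conclude $R_K^{1/d}\le\rho$. In a tower of bounded root discriminant, $\log\abs{\mathrm{disc}\,K}\asymp d$, so this only gives $R_K\le (Cd)^{d}$. Step 2 then yields $N\ge (c_1\log B/(Cd))^d$, which is vacuous for large $d$, and the argument collapses.

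You need an estimate in which the logarithm is effectively divided by the degree. One option is a residue bound of the shape $\mathrm{Res}_{s=1}\zeta_K(s)\le\bigl(O(\log\Delta_K)/(d-1)\bigr)^{d-1}$ (Louboutin). Another is the paper's route in Lemma~\ref{lem-regbound}: evaluate the inequality $2^dR_Kh_K\le 2s(s-1)(\pi^{-d/2}\Delta_K^{1/2})^s\Gamma(s/2)^d\zeta(s)^d$ at the fixed value $s=2$, which gives $R_K\le\Delta_K\le\delta^d$. With that substitution the rest of your argument is correct and proves the statement as given. The upper bound $(c_2jB^2)^d$ in Step 3 is also fine, though it comes from the $1$-separation argument of Lemma~\ref{lem-ball} rather than from Minkowski.

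A further remark on degeneracy. As literally stated, and as you prove it, degenerate solutions are allowed, and then the theorem is trivial for $k\ge 3$: $(u,-u,1)$ is a solution for every unit $u$. The version the paper actually proves, Theorem~\ref{th-unitlower-body}, counts non-degenerate solutions. There, your aside that discarding degenerate ones is harmless needs real work. The trivial bound of $2^{2j}N^{2j-2}$ degenerate tuples does not beat your lower bound $N^{2j}/(c_2jB^2)^d$, because that would require $N^2\gg(c_2jB^2)^d$, whereas $N\le O(\log B)^d$. The paper handles this with H\"older's inequality (log-convexity of the additive energies). This bounds the degenerate contribution by $C^{k^2}\abs{A}^k$, which is negligible once the parameters force the energy to be at least $\abs{A}^{3k/2}$.
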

By contrast, when $k=2$ this equation is conjectured to have sub-exponential in $d$ many solutions. For more details, and further discussion of related results, see Section~\ref{sec-linear}.

\subsection{Sum-product in other settings}
Finally, in Section~\ref{sec-variants} we discuss variants of our construction. We first obtain, via only a slight modification of the argument, analogues of all of the above results for $A\subset \bbq_p$ for any prime $p$. 

The second variant, obtained by taking the construction in $\bbq_p$ `modulo $p$' in a suitable sense, provides an upper bound for the sum-product problem in the finite field $\bbf_p$, another natural setting. We state a slightly simplified version here (see Theorem~\ref{finite-field-main} for the full version).
\begin{theorem}
There exists a constant $c>0$ such that, for all sufficiently large primes $p$, there exists $A \subset \mathbb F_p$ with $p^{c}< \abs{A} < p^{1/2}$ and $\max(\abs{A+A},\abs{AA})\leq \abs{A}^{2-c}$. \end{theorem}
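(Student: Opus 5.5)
The plan is to transfer the construction of Theorem~\ref{th-main} from a totally real field to $\bbf_p$ by reduction modulo a prime of degree one. Fix one of the fields $K$ of degree $d$ and the set $A\subset\mathcal{O}_K$ provided by Theorem~\ref{th-main} with $X$ a large absolute constant. Then $\abs{A}\geq X^d$, $\abs{A+A}\leq C^d\abs{A}$, $\abs{AA}\leq 2^{-d}\abs{A}^2$, and the elements of $A$ have bounded height: every conjugate has size at most $B^d$ for some absolute $B$. I will take the bound $B^d$ on trust from the construction (sums of products of a bounded number of units with bounded logarithmic embeddings), rather than from the bare statement of Theorem~\ref{th-main}. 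Let $\mathfrak p$ be a prime of $\mathcal{O}_K$ of residue degree one over $p$, so that $\mathcal{O}_K/\mathfrak p\cong\bbf_p$, and let $\pi$ be the reduction map.

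Since $\pi$ is a ring homomorphism, $\pi(A)+\pi(A)=\pi(A+A)$ and $\pi(A)\pi(A)=\pi(AA)$. The only thing to check is therefore that $\pi$ is injective on $A$, so that $\abs{\pi(A)}=\abs{A}$. Suppose $a\neq b$ in $A$ with $\pi(a)=\pi(b)$. Then $\mathfrak p\mid a-b$, so $p\leq\abs{N_{K/\bbq}(a-b)}\leq (2B^d)^d$. Consequently, once $p>(2B)^{d^2}$, injectivity holds, and $\pi(A)$ satisfies the same bounds as $A$, namely $\max(\abs{A+A},\abs{AA})\leq\abs{A}^{2-c}$ with $c$ absolute, as in Theorem~\ref{th-headline}.

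The main obstacle is the parameter balance. Injectivity wants $p$ to be large compared with $e^{O(d^2)}$, while $\abs{A}\approx X^d=e^{O(d)}$. A crude norm bound would therefore only give $\abs{A}\geq p^{c}$ if $d^2\ll d$, which is false. This has to be repaired in one of two ways.

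The first option, which I would try first, is to avoid the norm bound by a pigeonhole or averaging argument over the choice of $\mathfrak p$. The set of differences $a-b$ has size at most $\abs{A}^2$, and each nonzero difference lies in at most $O(d^2\log B/\log p)$ primes of norm about $p$. Degree-one primes of norm in $[P,2P]$ are abundant by Chebotarev, roughly $P/\log P$ of them counted over rational $p$ splitting in $K$. So for $P$ a large power of $\abs{A}$, some prime $\mathfrak p$ divides no difference. This gives injectivity with $p\leq\abs{A}^{O(1)}$, provided the Chebotarev error is controlled uniformly in $d$. That uniformity is the delicate point, because the discriminant of $K$ grows with $d$. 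If effective Chebotarev is too weak, I would instead choose $K$ concretely, for instance a subfield of a cyclotomic field $\bbq(\zeta_m)^+$, where the primes $p\equiv\pm1 \pmod m$ split completely. In that case Dirichlet's theorem in progressions, or simply choosing $p$ directly, suffices.

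The second option is to take $X$ growing, with $X=p^{1/(2d)}$ say, so that $\abs{A}\leq p^{1/2}$ holds automatically and the lower bound $p^c$ comes from the structure. This must be checked against the height growth in $X$.

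The remaining conditions are then adjustments. The upper bound $\abs{A}<p^{1/2}$ is obtained by choosing $p$ larger than $\abs{A}^2$, which is compatible with the averaging argument. For all sufficiently large $p$, rather than special primes, one picks $d$ and $X$ as functions of $p$ so that the prescribed $p$ works for one of the infinitely many fields. This is easiest with the cyclotomic choice, where $m$ is taken as a divisor of $p-1$ of suitable size; such divisors may not exist for every $p$, and I expect that fixing this is where the full Theorem~\ref{finite-field-main} requires additional work.
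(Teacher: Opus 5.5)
\textbf{Where the proposal goes wrong.} Your reduction modulo a degree-one prime $\mathfrak p$ is the same framework the paper uses, but the obstacle you identify is not real. In Lemma~\ref{lem-main}, $A=GP$ with $G=B^\times(Y)$ and $P=X+B^+(\epsilon X)$. So every conjugate of every $\alpha\in A$ satisfies $\abs{\sigma(\alpha)}\leq e^Y(1+\epsilon)X\leq 2Xe^Y$, which is an absolute constant for fixed $X,Y$, not $B^d$. Hence if $a\neq b$ in $A$ and $\mathfrak p\mid a-b$, then $p\leq\abs{N(a-b)}\leq (4Xe^Y)^d=e^{O(d)}$. This is the same order as $\abs{A}\geq (cXY)^d/(Y\Delta_K^{3/2})$. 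The crude norm bound therefore already gives injectivity together with $\abs{A}\geq p^{c}$: take $d$ as large as allowed by $(4Xe^Y)^d<p$ and $\abs{A}<p^{1/2}$. No Chebotarev averaging is needed. Your second option (growing $X$) is a dead end in any case. The saving in $\abs{AA}$ is only about $(C/Y)^d$ against $\abs{A}\approx(XY)^d$, so $X,Y$ must stay bounded and $d\asymp\log p$.

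\textbf{The genuine gap.} The real gap is the one you flag only at the end and do not close. The prime $p$ is prescribed, so for every large $p$ you need a totally real field $K$ with three properties: $\Delta_K\leq C^d$; degree $d\asymp\log p$, adjustable to within a constant factor; and a prime of residue degree one above $p$. Theorem~\ref{th-tower} says nothing about $p$. Chebotarev averaging chooses $p$ rather than accepting it. Your cyclotomic fallback fails outright: $\bbq(\zeta_m)^+$, and indeed any abelian field, has root discriminant tending to infinity with the degree. For example, $\bbq(\zeta_\ell)^+$ has root discriminant $\ell^{(\ell-3)/(\ell-1)}$. So $\Delta_K\leq C^d$ fails there, and Lemma~\ref{lem-main} gives no saving. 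The paper supplies exactly this missing ingredient in Lemma~\ref{field-existence-split-p}. Using Golod--Shafarevich (Gasch\"utz--Vinberg form, with relations $r\leq d+1$), it shows the following pro-$2$ extension of $\bbq$ is infinite: the maximal one unramified outside $S=\{3,5,\dots,23\}\setminus\{p\}$ and split completely at $p$ and $\infty$. Hence for every power of $2$ there is a totally real $K$ of that degree in which $p$ splits completely and which is only tamely ramified, at primes of $S$. This gives $\Delta_K\leq(3\cdot5\cdots23)^d$. Having every power of $2$ available as a degree is what lets $d$ be tuned to $p$ within a factor $2$, yielding both $\abs{A}<p^{1/2}$ and $\abs{A}>p^{c}$.
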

We note that such a result for $\abs{A}$ very small in terms of $p$ is a consequence of Theorem~\ref{th-headline} combined with the transference method of Vu, Wood, and Wood \cite{VWW11}, but here we are able to find sum-product counterexamples which are reasonably `large' (in that $\abs{A}\geq p^c$ for some constant $c>0$).

The first sum-product results for subsets of $\bbf_p$ of size $p^\delta$ for some $\delta>0$ were obtained by Bourgain, Katz, and Tao \cite{BKT04}. The best result thus far obtained in this direction is due to Mohammadi and Stevens \cite{MoSt23}, who proved that if $\abs{A}< p^{1/2}$ then 
\[\max(\abs{A+A},\abs{AA})\geq \abs{A}^{\frac{5}{4}-o(1)}.\]

The third variant, which is more involved, constructs sum-product counterexamples in infinite fields of fixed positive characteristic.

\begin{theorem}\label{th-ffmain}
There exists an absolute constant $c>0$ such that, for any prime $p$, if $q$ is an power of $p$ then there exist arbitrarily large $A\subset \bbf_{q}((t))$ such that
\[\max(\abs{A+A},\abs{AA})\leq \abs{A}^{2-\frac{c}{\log p}}.\]
\end{theorem}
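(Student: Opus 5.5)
We will prove Theorem~\ref{th-ffmain} by transplanting the number-field construction behind Theorem~\ref{th-main} into the function-field setting. The role of $\bbz\subset\bbq$ will be played by $\bbf_q[t]\subset\bbf_q(t)$, and the role of the totally real field $K$ of degree $d$ by a finite separable extension $L/\bbf_q(t)$ of degree $d$ that splits completely at the place $t=0$. The splitting hypothesis gives $d$ distinct embeddings $L\hookrightarrow \bbf_q((t))$, which replace the real embeddings. For the finite set we take
\[A=\{\alpha\in\mathcal{O}_L : \abs{\sigma_i(\alpha)}_\infty\leq X\text{ for all archimedean-type places }\sigma_i\},\]
that is, an analogue of the lattice points of $\mathcal{O}_L$ in a box, now a finite-dimensional $\bbf_q$-vector space of elements of bounded pole orders. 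Since $A$ is an $\bbf_q$-vector space (or close to one), we get $\abs{A+A}\leq C^{d}\abs{A}$ automatically. In fact the sumset is even better behaved than over $\bbr$, because $A+A=A$ when $A$ is a Riemann--Roch space, and Riemann--Roch gives $\abs{A}=q^{\deg D-g+1}$.

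The product-set bound is the heart of the matter. Just as in Theorem~\ref{th-main}, we want $\abs{AA}\leq 2^{-d}\abs{A}^2$, or in characteristic $p$ some loss of the form $\abs{A}^{-c/\log p}$ per unit degree. The mechanism should mirror the real case. We choose $L$ so that $\mathcal{O}_L$ has many small primes that split completely, or many units, so that a typical product $ab$ has many factorizations $ab=a'b'$ with $a',b'\in A$. The multiplicative energy is then large, and Cauchy--Schwarz bounds $\abs{AA}$. Concretely, I would take $L$ to be a compositum of many Artin--Schreier extensions or quadratic-type extensions $\bbf_q(t)(\sqrt{f_j})$. This gives unramified-ish structure with a large class group or unit group, so that $A$ contains $\gg \abs{A}$ pairs related by multiplication by a fixed nontrivial set of units or ideal-swaps. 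The count of such coincidences must be a positive proportion $\exp(cd)$ of all pairs.

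The main obstacle is that the number-field argument presumably exploits archimedean geometry (units of size close to $1$ in every real embedding, which keep products inside the box). The non-archimedean analogue of "size $\le X$ in every embedding" is far more rigid, and the count of units or ideal rearrangements that preserve the box loses a factor depending on $q$. Only the characteristic, though, should enter, via the number of genuinely independent constructions available in $\bbf_p$-structure. This is where the exponent degrades to $c/\log p$. I would first try to build $L$ as $\bbf_q\otimes$ (a construction defined over $\bbf_p(t)$). I would then run a counting argument for pairs $(a,b)$ whose product admits a second factorization, using the Riemann--Roch formula in place of Minkowski's theorem and Chebotarev-type (Weil bound) counts in place of prime-ideal density estimates. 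Finally I would verify that the saving per degree is at least $p^{-O(1)}$ times a $\log$, giving exponent $2-c/\log p$ after letting $X\to\infty$ along $\deg D\to\infty$.
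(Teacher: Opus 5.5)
The gap is on the product side, which is the whole content of the theorem. The set you actually define, a Riemann--Roch space $A=H^0(C,\mathcal O(D))$, is the analogue of the additive box $P$ in Lemma~\ref{lem-main}. You give no mechanism for a saving in $\abs{AA}$ for this set, and none should be expected. Already for $\mathbb F_q[t]$ with $A$ the polynomials of degree $<n$, the divisor bound gives $\abs{AA}\ge\abs{A}^{2-o(1)}$. In higher genus, a factorization $h=ab$ (up to scalars) is a sub-divisor $E\le\operatorname{div}(h)+2D$ with $E\sim D$. This is a condition in an exponentially large class group, and for typical $h$ one expects it to leave only the trivial factorizations. The real construction behaves the same way: the proof of Lemma~\ref{lem-main} only uses $\abs{PP}\le\abs{P}^2$, and the entire saving comes from multiplying by a separate set $G$ of bounded units with $\abs{GG}\le O(1)^d\abs{G}$, the product $GP$ being direct. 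Your substitute step also points the wrong way. Cauchy--Schwarz gives $E^\times(A)\ge\abs{A}^4/\abs{AA}$, which is a \emph{lower} bound on $\abs{AA}$. Large multiplicative energy, let alone many coincidences for a positive proportion of pairs, is compatible with $\abs{AA}\gg\abs{A}^2$. An upper bound needs a containment such as $AA\subseteq PP\cdot GG$ with $\abs{GG}\le K\abs{G}$.

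This is what the paper builds. Take a curve $C/\mathbb F_p$ whose rational points play the role of the infinite places. Let $G$ be the sections of a line bundle $L_G$ vanishing only at points of $C(\mathbb F_p)$ (bounded-degree $S$-units with $S=C(\mathbb F_p)$), and let $P$ be the sections of $L_P$ vanishing at no point of $C(\mathbb F_p)$. Then $\abs{PG}=\abs{P}\abs{G}/(p-1)$, since $f_1/f_2=g_2/g_1$ can have neither zeros nor poles. Also $A+A\subseteq H^0(C,L_P\otimes L_G)$, and $\abs{AA}/\abs{A}\le\abs{P}\abs{GG}/\abs{G}$. Two inputs absent from your plan are essential:
\begin{enumerate}
\item Serre's curves with $N=\abs{C(\mathbb F_p)}\ge c\,g\log p$, so that the $\binom{d_G+N-1}{d_G}$ divisors supported on rational points outnumber the at most $(\sqrt p+1)^{2g}$ divisor classes.
\item An averaging over classes of degree $d_G$ to choose $L_G$ with $G$ large and $\abs{GG}/\abs{G}\le 2\abs{\operatorname{Pic}^0(C)(\mathbb F_p)[2]}\binom{2d_G+N-1}{2d_G}/\binom{d_G+N-1}{d_G}$.
\end{enumerate}
With $d_P=d_G=xN$ for a large absolute constant $x$, the saving $c/\log p$ is the binomial entropy, about $\log x/\log p$ per rational point, measured against $\log_p\abs{A}\approx xN$. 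It does not come from ``independent $\mathbb F_p$-structures''. Your stated endpoint, a saving of order $p^{-O(1)}$, is also inconsistent with the target $c/\log p$. Nor can your suggested fields replace Serre's curves. A compositum of $k$ quadratic or Artin--Schreier extensions of $\mathbb F_q(t)$ has at most $q+1$ rational places per unit of degree, while its genus per unit of degree tends to infinity with $k$, so $N/g\to0$. Finally, general $q$ follows from $\mathbb F_p((t))\subseteq\mathbb F_q((t))$.
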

In small characteristics the exponent saving is reasonably good -- for example when $q=1024$ we obtain 
\[\max(\abs{A+A},\abs{AA})\leq \abs{A}^{1.906}.\]
The best known lower bound for the sum-product problem in function fields, due to Bloom and Jones \cite{BlJo14}, is that for any $q$ and $A\subset \bbf_q((t))$, 
\[\max(\abs{A+A}, \abs{AA})\geq  q^{-1/5}\abs{A}^{6/5-o(1)}\]
(where the $o(1)$ exponent tends to zero as $\abs{A}\to \infty$).
\subsection*{The role of AI in this proof} 
The authors were inspired to revisit
the possibility of disproving the sum-product conjecture using number
fields of large degree by the recent OpenAI counterexample to the unit distance conjecture (see \cite{ABGLSSTWW2026UnitDistance}). Curiously, the final construction given here required far less number theoretic input than the unit distance counterexample. GPT-5.5 Pro was used as a sounding board in the early stages of the development of this proof, but the final proof, including all the main ideas, was almost entirely human-generated (the exception being the suggestion of Lemma~\ref{lem-unitsep}, which replaced a more complicated result of Schinzel with a short elementary argument). Everything in this paper was written by the authors.

\subsection*{Acknowledgements}

We thank Akshat Mudgal for clarifying the quantitative aspects of \cite{Mu24}, and suggesting that our construction could also be used to prove something like Theorem~\ref{th-linearmain}. We thank Jacob Fox and Sarah Peluse for many helpful comments and Spencer Dembner for his careful reading of an earlier version of this article. 

TB is a Royal Society University Research Fellow. 
WS is supported by NSF grant DMS-2502029 and is a Sloan Research Fellow. 
CS is supported by the National Science Foundation Graduate Research Fellowship Program under Grant No.~DGE-2146755.

\section{Sketch of the construction}
In this section we give a sketch of the construction which is used to prove Theorem~\ref{th-main}. The construction is a high-dimensional version of the standard Balog--Wooley 
example first introduced in \cite{BalogWooleyLowEnergy}.  In its simplest one-dimensional form, one takes
\[
        A=GP,
\]
where \(G\) is a short geometric progression and \(P\) is an interval.  The
multiplicative structure of \(G\) gives\footnote{In this section we will make free use of the Vinogradov notation $\ll$ and $\gg$, both of which indicate the relevant inequality holds up to some absolute constant.}
\[
        \abs{GG}\ll \abs{G},
\]
while the additive structure of \(P\) keeps \(GP+GP\) inside a relatively short
interval.  This gives examples for which both \(\abs{A+A}\) and \(\abs{AA}\) are
smaller than the trivial bound \(\abs{A}^{2}\), but only by a logarithmic factor
in \(\abs{A}\).  This example was used by Balog and Wooley \cite{BalogWooleyLowEnergy} to show that the natural additive energy variant of the sum-product conjecture is false, yet it falls short of being a counterexample to the original conjecture since the geometric progression is exponentially sparse, which allows $A+A$ to still have size $\geq \abs{A}^{2-o(1)}$.

The point of the present construction is to transform this example, replacing the geometric progression with a much denser multiplicatively structured set, and the arithmetic progression with a high-dimensional lattice embedded in \(\mathbb R\).  Instead of working in \(\mathbb Z\), we work in the ring of integers
\(\mathcal O_K\) of a totally real number field \(K\) of large degree \(d\).  The
\(d\) real embeddings
\[
        \sigma_1,\ldots,\sigma_d\colon K\hookrightarrow \mathbb R
\]
allow us to view \(\mathcal O_K\) as a lattice of full rank in \(\mathbb R^d\).  We will
use two different lattice structures in \(\mathcal O_K\): the additive lattice
of algebraic integers and the multiplicative logarithmic lattice of units.

The additive part of the construction is a box of algebraic integers.  We choose
a large parameter \(X\), and take \(P\subset \mathcal O_K\) so that every
embedding of every \(p\in P\) lies in a short interval around \(X\), say
\[
        \sigma_i(p)\in [X-cX,X+cX]\qquad(1\leq i\leq d)
\]
for some small constant $c>0$. The geometry of numbers gives
\[
        \abs{P}\gg X^d\Delta_K^{-1/2},
\]
up to harmless constants. Thus, provided the discriminant $\Delta_K$ is bounded above by $O(1)^d$, \(P\) behaves like a \(d\)-dimensional additive box of size $\gg X^d$.

The multiplicative part is a box in the unit lattice.  By Dirichlet's unit
theorem, the logarithms of the absolute values of the embeddings of units form a
lattice of rank \(d-1\) in the hyperplane defined by the equation $x_1+\cdots+x_d=0$. We choose
\[
        G=\{u\in\mathcal O_K^\times:
        \abs{\log\abs{\sigma_i(u)}}\leq Y\ \text{for all }i\}.
\]
The regulator of the number field controls the covolume of this unit lattice, and hence, provided the regulator is at most $O(1)^d$, there are $\gg Y^{d-1}$ many such units.  Moreover, since \(GG\) is contained in the same logarithmic lattice
box with \(Y\) replaced by \(2Y\), we have a small-doubling estimate of the form
\[
        \abs{GG}\leq O(1)^d\abs{G}.
\]
This is the high-dimensional analogue of the fact that a geometric progression
has small product set.

We then take
\[
        A=GP=\{up:u\in G,\ p\in P\}.
\]
Importantly, provided $X$ and $Y$ are chosen suitably, this product is direct in the sense that $\abs{A}=\abs{G}\abs{P}$. This is because, if $u_1p_1=u_2p_2$ with $u_i\in G$ and $p_i\in P$, then $p_1/p_2$ is a unit. Provided the short interval around $X$ in the definition of $P$ is sufficiently short, we have $\sigma_i(p_1/p_2)\in [1-\epsilon,1+\epsilon]$ for all $1\leq i\leq d$ and any small absolute constant $\epsilon>0$. By a result of Schinzel, however, if $u\neq1$ is a unit then there exists $1\leq i\leq d$ such that $\Abs{\sigma_i(u)-1}>\epsilon$, where $\epsilon>0$ is an absolute constant independent of $K$ and $d$. Therefore the only solutions to $u_1p_1=u_2p_2$ are those with $u_1=u_2$. 

To control the size of $A+A$, the key point is that multiplication by units in \(G\) expands each embedding by
at most \(e^Y\).  Therefore every element of \(A\) lies, in all real embeddings,
inside a box of side length \(O(Xe^Y)\).  Consequently
\[
        A+A\subseteq
        \{\alpha\in\mathcal O_K:
        \abs{\sigma_i(\alpha)}\ll Xe^Y\ \text{for all }i\}.
\]
The additive lattice-counting estimate then gives
\[
        \abs{A+A}\leq O(e^YX)^d\leq O(e^Y)^d\abs{A},
\]
since \(\abs{P}\asymp X^d\) and \(\abs{A}=\abs{G}\abs{P}\).

On the product side we use
\[
        AA\subseteq GGP P.
\]
The set \(GG\) has size only \(O(1)^d\abs{G}\), while trivially $\abs{PP}\leq \abs{P}^2$, and so
\[
        \abs{AA}
        \leq O(1)^d \abs{G}\abs{P}^{2}
        \leq O(1/Y)^{d-1}\abs{A}^2.
\]
The saving in the product set is therefore roughly the size of the unit box
\(G\), which is $\geq (cY)^{d-1}$. Since the saving in $\abs{AA}$ is $O(1/Y)^{d-1}$, if $Y$ is chosen as a sufficiently large absolute constant then
\[
        \abs{AA}\leq 2^{-d}\abs{A}^{2}.
\]
On the other hand, once $Y$ is fixed, \(X\) may be chosen large enough, depending on a prescribed \(\epsilon>0\), so
that the factor \(O(e^Y)^d\) in the sumset estimate is bounded above by 
\(\abs{A}^{\epsilon}\).  This gives
\[
        \abs{A+A}\leq \abs{A}^{1+\epsilon}\quad\textrm{and}\quad
        \abs{AA}\leq \abs{A}^{2-c\epsilon}
\]
for some constant $c>0$. 

All that remains is to show that we can perform the above construction for arbitrarily large $A$, which means (since $X$ and $Y$ are constants, so $\abs{A}$ grows like $O(1)^d$) that we need $d\to \infty$. In other words, we need a supply of number fields with degree \(d\to\infty\), in which both the 
discriminant and the regulator (which control the covolume of the additive lattice and multiplicative lattice respectively) grow at most exponentially in \(d\).  Such
bounded-root-discriminant towers go back to Martinet's use of class field
towers, and the regulator control follows from standard Brauer--Siegel type
bounds in this setting. The regulator control has already found applications outside number theory, being used to construct explicit lattice sphere packings~\cite[\S4]{TS91}.

\section{Algebraic number theory}
In this section we review the necessary concepts required from algebraic number theory; with the exception of Theorem~\ref{th-tower}, these are all classical results that can be found in most textbooks on the subject.

Let $K$ be a totally real number field of degree $d$ over $\bbq$, and let $\Delta_K$ be the discriminant of $K$ (which is strictly positive if $K$ is totally real). Let $R_K$ be the regulator of $K$. For those unfamiliar with algebraic number theory, the important role of these parameters for our purposes is that they control the covolume of the lattices of the algebraic integers and units respectively. The only fact that we will require about $K$ (aside from it being totally real) is that these are both bounded above by $O(1)^d$. This is usually done with emphasis on $\Delta_K$, but similar control on $R_K$ follows from the following lemma.
\begin{lemma}\label{lem-regbound}
If $K$ is a totally real number field of degree $d\geq 2$ then
\[R_K \leq \Delta_K.\]
\end{lemma}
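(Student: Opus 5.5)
The plan is to go through the analytic class number formula. For a totally real field we have $r_1=d$, $r_2=0$ and $w_K=2$, so the residue $\kappa_K$ of the Dedekind zeta function $\zeta_K(s)$ at $s=1$ is
\[
\kappa_K=\frac{2^{d}h_KR_K}{2\sqrt{\Delta_K}}=\frac{2^{d-1}h_KR_K}{\sqrt{\Delta_K}}.
\]
Since $h_K\geq 1$, this gives
\[
R_K\leq 2^{1-d}\sqrt{\Delta_K}\,\kappa_K .
\]
So it suffices to show $\kappa_K\leq 2^{d-1}\sqrt{\Delta_K}$.

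For the upper bound on the residue I would use the explicit bound of Louboutin for totally real fields:
\[
\kappa_K\leq\Big(\frac{e\log\Delta_K}{2(d-1)}\Big)^{d-1}.
\]
This bound is valid for $d\geq 2$, which is exactly where the hypothesis $d\geq 2$ enters. If I wanted to avoid citing it, I would derive a bound of the same shape, $\kappa_K\ll(C\log\Delta_K/d)^{d-1}$. The route is to apply Phragm\'en--Lindel\"of convexity to $(s-1)\zeta_K(s)$, using the functional equation, on a vertical strip with $\sigma=1+(d-1)/\log\Delta_K$. At that abscissa I would bound $\zeta_K(\sigma)\leq\zeta(\sigma)^d$, and it is this choice that produces the factor $(\log\Delta_K/(d-1))^{d-1}$.

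It then remains to check the elementary inequality. Write $L=\log\Delta_K$ and $t=L/(d-1)$. Taking logarithms, we need
\[
(d-1)\log\frac{eL}{4(d-1)}\leq \frac{L}{2},
\]
or equivalently
\[
1+\log t-\log 4\leq \frac{t}{2}\qquad\text{for all }t>0.
\]
The function $t\mapsto 1+\log t-\log 4-t/2$ is maximised at $t=2$, where it equals $\log 2-\log 4<0$. Hence $\kappa_K\leq 2^{d-1}\sqrt{\Delta_K}$, and so $R_K\leq\Delta_K$. Notice that we do not even need Minkowski's lower bound on $\Delta_K$; the inequality holds for every $t$, with room to spare.

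The main obstacle is the residue bound. Everything else is bookkeeping. The constants in that bound must be explicit and uniform in $d$: any bound of the form $\kappa_K\leq(c\log\Delta_K/(d-1))^{d-1}$ with $c<4$ works by the same calculus, with the maximum of $\log t+\log(c/2)-t/2$ now at $t=2$ giving value $\log c-1-\log 2$, which is negative precisely when $c<2e$. A weaker bound such as $\kappa_K\ll(\log\Delta_K)^{d-1}$, with no $1/(d-1)$ factor, would fail when $d$ is large. I would therefore cite Louboutin's inequality as a black box, as the paper does for Theorem~\ref{th-tower}. As a fallback I would carry out the convexity argument above, keeping track of the gamma factors $\Gamma(s/2)^d$ via Stirling's formula.
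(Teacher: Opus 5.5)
Your argument is correct, but it takes a different and considerably heavier route than the paper's. The paper never uses the residue at $s=1$. It takes the classical inequality from Lang's proof of Brauer--Siegel, $2^dR_Kh_K\leq 2s(s-1)(\pi^{-d/2}\Delta_K^{1/2})^s\Gamma(s/2)^d\zeta(s)^d$ for real $s>1$, and simply sets $s=2$. This gives $R_Kh_K\leq 4(\pi/12)^d\Delta_K\leq\Delta_K$. In your normalisation this reads $\kappa_K\leq 2(\pi/6)^d\sqrt{\Delta_K}$, which is already far below your target $2^{d-1}\sqrt{\Delta_K}$. So the residue bound you call the main obstacle is not really one: the estimate you need is weak enough that no information near $s=1$ is required, and your Phragm\'en--Lindel\"of fallback is unnecessary. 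What Louboutin's sharper estimate near $s=1$ buys you is a stronger conclusion, $R_K\leq\sqrt{\Delta_K}\,\bigl(e\log\Delta_K/(4(d-1))\bigr)^{d-1}$. That is roughly $\sqrt{\Delta_K}$ rather than $\Delta_K$, up to a factor exponential in $d$. The paper, however, only needs $R_K\leq O(1)^d$ when $\Delta_K\leq C^d$. The $s=2$ evaluation delivers that in two lines from a textbook inequality, with no specialised explicit result.

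There is one slip in your closing remark, though it does not affect the main argument. At $t=2$ the function $\log t+\log(c/2)-t/2$ equals $\log c-1$, not $\log c-1-\log 2$. So a bound $\kappa_K\leq(c\log\Delta_K/(d-1))^{d-1}$ makes your uniform-in-$t$ calculus work exactly when $c\leq e$. Your thresholds $c<4$ and $c<2e$ are both too generous, and they are also inconsistent with each other. In your main argument $c=e/2$, and the maximum is $-\log 2$, as you correctly computed.
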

\begin{proof}
As in the proof of \cite[XIII, Theorem 3]{La70}, for any real $s>1$, if $h_K$ is the class number of $K$ then
\[2^dR_Kh_K\leq 2s(s-1)(\pi^{-d/2}\Delta_K^{1/2})^s \Gamma(s/2)^d\zeta(s)^d.\]
In particular, letting $s=2$, since $h_K\geq 1$ and $d\geq 2$,
\[R_K\leq R_Kh_K \leq 4(\pi/12)^d\Delta_K\leq \Delta_K.\qedhere\]
\end{proof}

It therefore suffices to produce $K$ with $\Delta_K\leq O(1)^d$, for arbitrarily large $d$. Such towers were first constructed by Martinet \cite{Ma78}.

\begin{theorem}[Martinet]\label{th-tower}
There exists an absolute constant $C>0$ such that, for infinitely many $d$, there exist totally real number fields $K$ with degree $d$ with $\Delta_K\leq C^d$.
\end{theorem}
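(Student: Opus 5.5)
The plan is Golod--Shafarevich class field towers. We want a totally real base field $k$ whose $p$-class field tower is infinite for some prime $p$, with every field in the tower still totally real. We then take $K$ to be the finite layers of that tower. Each layer $K_n$ is unramified over $k$, so the root discriminant is constant: $\Delta_{K_n}^{1/[K_n:\bbq]}=\Delta_k^{1/[k:\bbq]}$. Since the degrees $[K_n:\bbq]=[k:\bbq]\,[K_n:k]$ tend to infinity, this gives infinitely many $d$ with $\Delta_K\leq C^d$, where $C=\Delta_k^{1/[k:\bbq]}$.

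The first point to arrange is total reality. The Hilbert class field is unramified at all places, including the infinite ones, in the narrow sense that real places stay real. So the ordinary (wide) Hilbert $p$-class field tower of a totally real field consists of totally real fields. I will therefore use the wide tower throughout.

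The second point is infiniteness. I will use the Golod--Shafarevich criterion in Roquette's form. Let $\rho=d_p\mathrm{Cl}(k)$ be the $p$-rank of the class group, and let $r_1+r_2-1+\delta$ be the rank of $\mathcal{O}_k^\times/(\mathcal{O}_k^\times)^p$, where $\delta$ records whether $k$ contains $\mu_p$. For totally real $k$ and $p=2$, this rank is $[k:\bbq]$. The tower is infinite provided
\[
\rho\geq 2+2\sqrt{[k:\bbq]+1}.
\]

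The step I expect to be the main obstacle is constructing an explicit $k$ with large $2$-rank of the class group. To do this I take $p=2$ and a multiquadratic field: start from $k_0=\bbq(\sqrt{q_1},\ldots,\sqrt{q_m})$, or rather, following Martinet, a suitable totally real field of small degree. Then adjoin $\sqrt{\ell_1\cdots}$ for many primes $\ell_j\equiv 1 \pmod 4$ (so that everything stays real), chosen so that many primes ramify in a quadratic extension $k/k_0$. Genus theory then gives $\rho\geq t-1-\mathrm{rank}(\text{units})$, where $t$ is the number of ramified primes of $k_0$. Choosing the $\ell_j$ to split completely in $k_0$ makes $t$ large relative to $[k:\bbq]$, so the Golod--Shafarevich inequality above holds; Martinet's choice with $k_0$ of degree $4$ or $8$ and about eight primes is one way to do this. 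I will carry out this count carefully, making sure the unit rank correction, which is at most $[k_0:\bbq]$, is beaten.
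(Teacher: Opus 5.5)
Your argument is correct. It is essentially Martinet's original class-field-tower route; the paper does not prove Theorem~\ref{th-tower} at all, but cites \cite{Ma78}. The ingredients you name all check out:
\begin{itemize}
\item the wide Hilbert $2$-class field tower of a totally real field consists of totally real fields;
\item the root discriminant is constant along that tower;
\item for totally real $k$ and $p=2$, the relation bound $r\le d+r_1+r_2-1+\delta=d+[k:\mathbb{Q}]$ gives exactly your criterion $\rho\geq 2+2\sqrt{[k:\mathbb{Q}]+1}$;
\item the genus bound $\rho\geq t-1-[k_0:\mathbb{Q}]$ is right.
\end{itemize}
The step you flag as the main obstacle is in fact easy. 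Already $k_0=\mathbb{Q}$ and $k=\mathbb{Q}(\sqrt{\ell_1\cdots\ell_8})$, with distinct primes $\ell_j\equiv 1\pmod 4$, give $t=8$. Hence $\rho\geq 6>2+2\sqrt{3}$. Martinet's more elaborate base fields only serve to make $C$ small. There are two small slips. The ``narrow'' Hilbert class field is the one in which real places may become complex, so you mean the wide one. And $k$ stays totally real because the radicand is positive, not because of the congruence $\ell_j\equiv 1\pmod 4$.

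The closest self-contained argument in the paper is the proof of Lemma~\ref{field-existence-split-p}, which proves a stronger statement by a different mechanism. Instead of an unramified tower over an auxiliary base field, it works directly over $\mathbb{Q}$ with $G_S^T(2)$. This is the Galois group of the maximal pro-$2$ extension unramified outside a fixed set $S$ of eight odd primes and split completely at $T=\{p,\infty\}$. The generator rank $d\geq |S|-2$ is read off from the explicit multiquadratic field $\mathbb{Q}(\sqrt{q}:q\in S)$. The relation rank $r\le d+1$ comes from \cite[Theorem 10.7.12]{Neukirch2008}. The discriminant is controlled by tameness, $\Delta_K\le(\prod_{q\in S}q)^d$, rather than by unramifiedness.

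Your route gives an exactly constant root discriminant and needs only the classical unramified Shafarevich bound plus genus theory. The paper's route avoids class groups and genus theory. More importantly, it lets one impose complete splitting of a prescribed prime $p$ at no extra cost, which the $p$-adic and finite-field variants require. To get that in your framework you would need a $T$-split class field tower and the corresponding modified relation-rank bound.
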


There are $d$ embeddings $K\hookrightarrow \bbr$. These let us view the algebraic integers as $d$-dimensional lattices. As we are concerned with both sums and products, both the additive and multiplicative versions of these lattices will be useful to us.
\subsection{The additive lattice}
The ring of algebraic integers $\mathcal{O}_K$ can be viewed as a lattice of rank $d$ in $\bbr^d$ via the Minkowski embedding
\[\alpha \mapsto (\sigma_1(\alpha),\ldots,\sigma_d(\alpha)),\]
where $\sigma_1,\ldots,\sigma_d$ are the embeddings $\sigma_i\colon K\hookrightarrow \bbr$. The covolume of this lattice is $\Delta_K^{1/2}$ (see \cite[V, Lemma 2]{La70}). We write 
\[B^+(X) = \{ \alpha \in \mathcal{O}_K : \abs{\sigma_i(\alpha)}\leq X\textrm{ for all }1\leq i\leq d\}.\]
\begin{lemma}\label{lem-ball}
Let $K$ be a totally real number field of degree $d$. For any $X\geq 1$
\[X^{d}\Delta_K^{-1/2}\leq \Abs{B^+(X)}\leq (2X+1)^{d}.\] 
\end{lemma}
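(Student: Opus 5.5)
We prove the two inequalities separately, viewing $\mathcal{O}_K$ as the lattice $\Lambda=\sigma(\mathcal{O}_K)\subset\bbr^d$ of covolume $\Delta_K^{1/2}$ under the Minkowski embedding. Then $B^+(X)$ is exactly $\Lambda\cap[-X,X]^d$.

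\emph{Upper bound.} Here we use that the rational integers are the only elements of $\mathcal{O}_K$ lying in $\bbz$, together with a coordinate-projection argument. Pick an integral basis and use the fact that the coordinate projection $\bbr^d\to\bbr$, $x\mapsto x_1$, does not directly separate points, so a cleaner route is needed. We instead claim that distinct elements of $B^+(X)$ have images separated in a way that allows a packing argument. Suppose $\alpha\neq\beta$ lie in $B^+(X)$ with $\lfloor\sigma_i(\alpha)\rfloor=\lfloor\sigma_i(\beta)\rfloor$ for every $i$. Then $\gamma=\alpha-\beta$ is a nonzero algebraic integer with $\abs{\sigma_i(\gamma)}<1$ for all $i$. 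Hence $\abs{N_{K/\bbq}(\gamma)}=\prod_i\abs{\sigma_i(\gamma)}<1$. But the norm is a nonzero rational integer, which is a contradiction. So the map $\alpha\mapsto(\lfloor\sigma_i(\alpha)\rfloor)_i$ is injective on $B^+(X)$. Its image lies in $\{-\lceil X\rceil,\dots,\lfloor X\rfloor\}^d$, which has at most $(2X+1)^d$ elements. This proves the upper bound.

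\emph{Lower bound.} The plan is Minkowski/Blichfeldt-style counting with a fundamental domain. Let $F$ be a fundamental parallelepiped of $\Lambda$; its volume is $\Delta_K^{1/2}$. Consider the box $Q=[-X,X]^d$, of volume $(2X)^d$. By an averaging (Blichfeldt) argument, some translate $t+Q$ satisfies $\abs{(t+Q)\cap\Lambda}\geq \mathrm{vol}(Q)/\mathrm{vol}(F)=(2X)^d\Delta_K^{-1/2}$. To see this, integrate $\sum_{\lambda\in\Lambda}1_{Q}(\lambda-t)$ over $t\in F$, which gives exactly $\mathrm{vol}(Q)$.

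Take $N\geq (2X)^d\Delta_K^{-1/2}$ points $\lambda_1,\dots,\lambda_N\in\Lambda\cap(t+Q)$. Then all differences $\lambda_j-\lambda_1$ lie in $\Lambda\cap(Q-Q)=\Lambda\cap[-2X,2X]^d$. This only gives $\abs{B^+(2X)}\geq(2X)^d\Delta_K^{-1/2}$, and rescaling $X\mapsto X/2$ yields $\abs{B^+(X)}\geq X^d\Delta_K^{-1/2}$, as required. This version requires applying the argument at $X/2$, which needs only $X/2>0$; the hypothesis $X\geq1$ is not even essential here.

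The main subtlety is the translate-versus-centered issue: Blichfeldt gives many points in some translate, not in the centered box. Passing to differences, with the factor-$2$ loss absorbed by the rescaling above, resolves it. The upper bound rests on the norm argument.
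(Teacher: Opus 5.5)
Your approach is the paper's: the upper bound uses the $1$-separation of distinct elements of $\mathcal{O}_K$ in the $L^\infty$ norm (via $1\le\abs{N(\alpha-\beta)}$), and the lower bound is the paper's Blichfeldt-plus-difference-set argument. The paper uses the box of half-width $X/2$ directly instead of rescaling at the end, which is the same thing. The lower bound is fine as written.

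There is, however, a counting error in your upper bound. The set $\{-\lceil X\rceil,\dots,\lfloor X\rfloor\}$ has $\lfloor X\rfloor+\lceil X\rceil+1$ elements. This equals $2X+1$ only when $X$ is an integer. For non-integer $X$ it is $2\lfloor X\rfloor+2$, which exceeds $2X+1$ whenever the fractional part of $X$ lies in $(0,\tfrac12)$. For example, with $X=1.2$ the floors can take the four values $-2,-1,0,1$, so your injection gives only $\Abs{B^+(X)}\le 4^d$, not $(3.4)^d$. The lemma is stated for all real $X\ge1$, and the paper applies it at non-integer radii such as $\epsilon X$ and $4Xe^Y$. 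As written, your argument therefore proves only $(2X+2)^d$.

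The fix is one line: use the shifted map $\alpha\mapsto(\lfloor\sigma_i(\alpha)+X\rfloor)_i$. Injectivity holds exactly as before, since equal floors still force $\abs{\sigma_i(\alpha-\beta)}<1$ for every $i$. The image now lies in $\{0,1,\dots,\lfloor 2X\rfloor\}^d$, which has $(\lfloor 2X\rfloor+1)^d\le(2X+1)^d$ elements.

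Alternatively, use the paper's volume packing. The open cubes of side $1$ centred at the $1$-separated points are pairwise disjoint and lie in $[-X-\tfrac12,X+\tfrac12]^d$, which has volume $(2X+1)^d$.

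The opening sentences of your upper-bound paragraph, about integral bases and coordinate projections, play no role and can be deleted.
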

\begin{proof}
In the embedding described above, $B^+(X)$ is contained inside the $L^\infty$ ball of radius $X$. Moreover, points in this lattice are at least $1$-separated in the $L^\infty$ norm: if $x\neq y\in \mathcal{O}_K$ then, since $x-y$ is a non-zero algebraic integer, it has a non-zero integral norm. Furthermore, since $N(\alpha)=\prod_{i=1}^d \sigma_i(\alpha)$, we deduce
\[1\leq \Abs{N(x-y)}\leq \prod_{i=1}^d\Abs{\sigma_i(x-y)},\]
so there must exist $1\leq i\leq d$ such that $\Abs{\sigma_i(x)-\sigma_i(y)}\geq 1$. By a standard packing argument (for example, placing disjoint balls of radius $1/2$ around each lattice point) there are at most $(2X+1)^{d}$ many $1$-separated points in a ball of radius $X$, and we are done.

For the lower bound we use Blichfeldt's lemma: the covolume of the lattice is $\Delta_K^{1/2}$, and hence there exists some $a$ such that the number of lattice points in $a+\{ x\in \bbr^d : \norm{x}_\infty \leq X/2\}$ is at least
\[\frac{\mathrm{vol}(\{ x: \norm{x}_\infty \leq X/2\})}{\Delta_K^{1/2}}= \frac{X^{d}}{\Delta_K^{1/2}}.\]
The conclusion now follows by taking the difference set of these points.
\end{proof}

\subsection{The unit lattice}
The group of units $\mathcal{O}_K^\times$ of $K$ is the set of algebraic integers $\alpha$ such that $\alpha^{-1}$ is also an algebraic integer. By Dirichlet's unit theorem (see, for example, \cite[Chapter 1.7]{Ne99}) the group of units (modulo the roots of unity) $\mathcal{O}_K^\times/\{\pm 1\}$ can be viewed as a lattice of rank $d-1$ in $\bbr^{d}$ via the embedding
\[u\mapsto (\log \abs{\sigma_1(u)},\ldots,\log \abs{\sigma_d(u)}).\]
This is a lattice of rank $d-1$ inside the hyperplane
\[H=\{ x\in \bbr^d : x_1+\cdots+x_d=0\}.\]
The covolume of this lattice in $H$ is $\sqrt{d}R_K$, where $R_K$ is the regulator (see \cite[Chapter 1, Proposition 7.5]{Ne99}). We write 
\[B^\times(Y) = \{ \alpha \in \mathcal{O}_K^\times : \abs{\log\abs{\sigma_i(\alpha)}}\leq Y\textrm{ for all }1\leq i\leq d\}.\]
We note here the trivial, but crucial, fact that integers in $B^\times(Y)$ are still bounded in the additive sense also, so that
\[B^\times(Y)\subseteq B^+(e^Y).\]

It is a well-known fact that points in the unit lattice are separated by an absolute constant (independent of both the field and degree). This follows, for example, from  Schinzel's lower bound for the Mahler measure~\cite[Theorem 2]{Schinzel1973}. For our purposes the following simple lemma (suggested by GPT-5.5 Pro) will suffice. Let $\phi=\frac{1+\sqrt{5}}{2}$, so that $0\leq x^2+x^{-2}-2<1$ whenever $x\in (\phi^{-1},\phi)$.
\begin{lemma}\label{lem-unitsep}
If $u\in \mathcal{O}_K^\times$ and $\phi^{-1}< \Abs{\sigma_i(u)}<\phi$ for all $1\leq i\leq d$ then $u\in\{\pm 1\}$. 
\end{lemma}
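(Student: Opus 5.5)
Consider the algebraic integer $\beta=u^2+u^{-2}-2=(u-u^{-1})^2$. Since $u$ is a unit, $u^{-1}\in\mathcal{O}_K$, so $\beta\in\mathcal{O}_K$. For each embedding, $\sigma_i(\beta)=x^2+x^{-2}-2$ with $x=\abs{\sigma_i(u)}\in(\phi^{-1},\phi)$. By the remark preceding the lemma, this gives $0\leq\sigma_i(\beta)<1$ for all $1\leq i\leq d$.

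Suppose $\beta\neq 0$. Then $N(\beta)=\prod_{i=1}^d\sigma_i(\beta)$ is a nonzero rational integer, exactly as in the proof of Lemma~\ref{lem-ball}, so $\abs{N(\beta)}\geq 1$. On the other hand, every factor has absolute value strictly less than $1$, so $\abs{N(\beta)}<1$, a contradiction. Hence $\beta=0$.

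Then $(u-u^{-1})^2=0$ in the field $K$, so $u=u^{-1}$. This gives $u^2=1$, and therefore $u\in\{\pm1\}$.

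The only point needing care is the elementary inequality $0\leq x^2+x^{-2}-2<1$ on $(\phi^{-1},\phi)$, which the paper already states. It follows from $x^2+x^{-2}-2=(x-x^{-1})^2$ together with $\abs{x-x^{-1}}<1$ there, since $\phi-\phi^{-1}=1$ and $x-x^{-1}$ is monotone. So I expect no real obstacle: the argument reduces to the integrality of the norm.
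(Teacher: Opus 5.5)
Your proof is correct and is essentially the paper's argument: the paper also takes $u^2+u^{-2}-2\in\mathcal{O}_K$, notes that every real embedding of it lies in $[0,1)$, and uses the integrality of the norm to force it to vanish, so that $u^2=1$. Writing this element as $(u-u^{-1})^2$ adds a clean justification of the elementary inequality on $(\phi^{-1},\phi)$, which the paper only asserts.
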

\begin{proof}
Let $\alpha=u^2+u^{-2}-2\in \mathcal{O}_K$. If $\alpha \neq 0$ then, for each embedding $\sigma$, we have
\[0<\sigma(u)^2+\sigma(u)^{-2}-2<1,\]
so $\sigma(\alpha)\in (0,1)$. This contradicts that $N(\alpha)=\prod \sigma(\alpha)$ must be an integer. It follows that $\alpha=0$, whence $u^2=1$ and so $u\in \{\pm 1\}$. 
\end{proof}

The proof of the following is similar to that of Lemma~\ref{lem-ball}.
\begin{lemma}\label{lem-ball-mult}
Let $K$ be a totally real number field of degree $d$. For any $Y\geq 1$
\[Y^{d-1}d^{-1/2}R_K^{-1}\leq \Abs{B^\times(Y)}\leq 10(5Y+1)^{d-1}.\] 
\end{lemma}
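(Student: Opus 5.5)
We will mirror the proof of Lemma~\ref{lem-ball}, working in the logarithmic embedding $L\colon u\mapsto (\log\abs{\sigma_1(u)},\ldots,\log\abs{\sigma_d(u)})$. This map sends $\mathcal{O}_K^\times/\{\pm1\}$ onto a lattice $\Lambda$ of rank $d-1$ in the hyperplane $H$, with covolume $\sqrt{d}R_K$. The set $B^\times(Y)$ is exactly the preimage of $\Lambda\cap Q_Y$, where $Q_Y=H\cap[-Y,Y]^d$. Since $u$ and $-u$ have the same image, $\abs{B^\times(Y)}=2\abs{\Lambda\cap Q_Y}$, so it suffices to count lattice points of $\Lambda$ in $Q_Y$.

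\emph{Upper bound.} By Lemma~\ref{lem-unitsep}, distinct points of $\Lambda$ are separated in $L^\infty$ by at least $\log\phi$. Indeed, if $L(u)-L(v)$ has all coordinates of absolute value $<\log\phi$, then $u/v$ is $\pm1$. We project $H$ onto the first $d-1$ coordinates. This is injective on $H$ and does not increase $L^\infty$ distances, but separation could a priori drop. To handle this, we note that if two points of $H$ agree to within $\delta$ in the first $d-1$ coordinates, the last coordinates differ by at most $(d-1)\delta$, which is too lossy. We therefore use the $L^\infty$ norm in $\bbr^d$ directly and do a volume packing inside $H$ instead. Around each lattice point we place the set $H\cap(\text{point}+[-\tfrac{\log\phi}{2},\tfrac{\log\phi}{2}]^d)$; these sets are pairwise disjoint and all lie in $H\cap[-Y-1,Y+1]^d$. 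A cleaner route is to project to $\bbr^{d-1}$ by forgetting the coordinate of largest $\ldots$; this is messy. The simplest approach is to use the projection $\pi$ to the first $d-1$ coordinates together with a pigeonhole argument. We cover $[-Y,Y]^{d-1}$ by cubes of side $s$. If two lattice points lie in the same cube, their difference $w\in\Lambda$ has $\abs{w_i}<s$ for $i<d$, but $w_d=-\sum_{i<d} w_i$ could be large.

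To fix this, we use the fact that $u\mapsto u^2+u^{-2}-2$ only requires $\sigma_i(\alpha)\in(0,1)$ on \emph{most} coordinates, with a norm bound on the rest. Precisely, Lemma~\ref{lem-unitsep}'s argument adapts as follows: if $\abs{w_i}<\epsilon$ for all $i<d$, then $\abs{N(\alpha)}\le (2\cosh 2\epsilon-2)^{d-1}\cdot e^{2\abs{w_d}}$. Since $\abs{w_d}\le(d-1)\epsilon$, taking $\epsilon$ a small absolute constant with $(2\cosh2\epsilon-2)e^{2\epsilon}<1$ forces $N(\alpha)=0$, so $w=0$. Hence $\pi(\Lambda)$ is $\epsilon$-separated in $L^\infty$ on $\bbr^{d-1}$, with $\pi(\Lambda\cap Q_Y)\subset[-Y,Y]^{d-1}$. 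Packing then gives at most $(2Y/\epsilon+1)^{d-1}$ points. With $\epsilon=2/5$ (to be checked: $(2\cosh0.8-2)e^{0.8}\approx 0.67\cdot2.23>1$, which fails) we would instead take $\epsilon$ smaller, or exploit that the constant $10$ and $5Y$ give slack. Matching exactly $10(5Y+1)^{d-1}$ is the main obstacle. The plan is to choose the separation scale $\epsilon\ge 2/5$ via a sharper use of the norm argument, for instance using $\alpha=u+u^{-1}-2$ or $u-1$ with signs fixed by working modulo $\pm1$, and to absorb leftover constants into the factor $10$.

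\emph{Lower bound.} We apply Blichfeldt in $H$, which has dimension $d-1$. The body $S=H\cap[-Y/2,Y/2]^d$ has $(d-1)$-volume at least $Y^{d-1}$ times the section volume of the unit cube by $H$ through its centre, which is $\ge\sqrt d$ (its projection to $\bbr^{d-1}$ contains $[-1/2,1/2]^{d-1}\cap\{\abs{\sum x_i}\le 1/2\}$, volume $\ge 1$ by Ball/Vaaler-type bounds, and area scales by $\sqrt d$). Some translate of $S$ therefore contains at least $\operatorname{vol}(S)/(\sqrt dR_K)\geq Y^{d-1}/R_K\geq Y^{d-1}d^{-1/2}R_K^{-1}$ lattice points. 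Differences of these points lie in $\Lambda\cap Q_Y$ and are distinct, and the factor $2$ from $\pm$ only helps.
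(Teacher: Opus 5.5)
\textbf{The upper bound has a genuine gap.} It is not actually proved, and the projection route you settle on cannot prove it.

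For $\alpha=u^2+u^{-2}-2=(u-u^{-1})^2$, your estimate is $\abs{N(\alpha)}\le\bigl((2\cosh 2\epsilon-2)e^{2\epsilon}\bigr)^{d-1}=\bigl((e^{2\epsilon}-1)^2\bigr)^{d-1}$. This is $<1$ only when $\epsilon<\tfrac12\log 2\approx 0.347$. The estimate is sharp as a function of $w$: take $w=(\epsilon,\dots,\epsilon,-(d-1)\epsilon)$. So this argument cannot certify any separation $\epsilon\ge 2/5$.

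The best the projection yields is about $2(5.78Y+1)^{d-1}$. The excess $(5.78/5)^{d-1}$ is exponential in $d$, so it cannot be absorbed into the constant $10$. Your proposed alternatives do not help. Multiplying by $-1$ flips all signs at once, so you cannot make every $\sigma_i(u)$ positive. Passing to $u^2$ gives $u^2-1=u(u-u^{-1})$, which has the same norm up to sign.

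The missing step is the packing inside $H$ that you wrote down and then abandoned, which is exactly what the paper does. Set $c=\log\phi$. Since the lattice points lie in $H$, the pieces $x+\bigl(H\cap[-\tfrac c2,\tfrac c2]^d\bigr)$ are disjoint translates. They lie in $H\cap[-Y-\tfrac c2,Y+\tfrac c2]^d$; your radius $Y+1$ is too lossy for $Y$ below about $3.7$.

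The paper compares volumes using Hensley's bounds $(2r)^{d-1}\le\mathrm{vol}_{d-1}(H\cap[-r,r]^d)\le 5(2r)^{d-1}$. This gives at most $5(2Y/c+1)^{d-1}$ lattice points. Since $2/\log\phi<5$, doubling for $\pm u$ gives $10(5Y+1)^{d-1}$. In fact the container is just the dilate of a single piece by the factor $(2Y+c)/c$. So the volume ratio is exactly $(2Y/c+1)^{d-1}$, and even Hensley's constant $5$ is unnecessary there.

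\textbf{The lower bound rests on a false claim.} The section $H\cap[-\tfrac12,\tfrac12]^d$ does not have $(d-1)$-volume $\ge\sqrt d$; its volume tends to $\sqrt{6/\pi}$. Its projection $\{x\in[-\tfrac12,\tfrac12]^{d-1}:\abs{\sum x_i}\le\tfrac12\}$ has volume $\Pr(\abs{U_1+\cdots+U_{d-1}}\le\tfrac12)\asymp d^{-1/2}$, not $\ge 1$; already for $d=3$ it is $3/4$. Vaaler's theorem concerns central sections, not this projection.

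What is true is Hensley's (equivalently Vaaler's) lower bound $\mathrm{vol}_{d-1}(H\cap[-\tfrac Y2,\tfrac Y2]^d)\ge Y^{d-1}$, which is the input the paper uses. With it, your Blichfeldt-plus-difference-set argument gives exactly $Y^{d-1}d^{-1/2}R_K^{-1}$. The stated lower bound therefore survives, but the factor $d^{-1/2}$ is genuinely needed rather than slack, and the volume bound must be cited correctly.
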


In the proof of Lemma~\ref{lem-ball-mult} we will require bounds on the $(d-1)$-dimensional volume of $H\cap \{ x\in \bbr^d : \norm{x}_\infty \leq r\}$. Hensley \cite{He79} proved that 
\begin{equation}\label{eq-hensley}
    (2r)^{d-1}\leq \mathrm{vol}_{d-1}(H\cap \{ x\in \bbr^d : \norm{x}_\infty \leq r\})\leq 5(2r)^{d-1}.
\end{equation}
In fact, the central limit theorem implies that this volume is $\sim \sqrt{6/\pi}(2r)^{d-1}$ as $d\to \infty$ (a remark which Hensley attributes to Selberg).

\begin{proof}
Losing only a factor of $2$ (since $u$ and $-u$ are both mapped to the same vector) the set $B^\times(Y)$ can be viewed as a subset of the $L^\infty$ ball of radius $Y$, intersected with the hyperplane $H$. Moreover, by Lemma~\ref{lem-unitsep}, points in this lattice are at least $(\log \phi)$-separated in the $L^\infty$ norm. Indeed, if $x,y\in \mathcal{O}_K^\times$ and $x\not\in \{y,-y\}$ then $x/y\in \mathcal{O}_K^\times \backslash \{\pm 1\}$, and hence there exists $\sigma$ such that
\[ \Abs{\sigma(x)}/\Abs{\sigma(y)} \not\in(\phi^{-1},\phi).\]
Combining the same standard packing argument as in the proof of Lemma~\ref{lem-ball} with \eqref{eq-hensley}, there are at most $5(\frac{2}{c}Y+1)^{d-1}$ many $c$-separated points in $H$ intersected with an $L^\infty$ ball of radius $Y$. This proves the upper bound since $2/\log(\phi)<5$.

For the lower bound, we use the same idea as before: the covolume of the lattice is $\sqrt{d}R_K$, and hence there exists some $a$ such that the number of lattice points in $a+\{ x\in H : \norm{x}_\infty \leq Y/2\}$ is at least 
\[\frac{\mathrm{vol}_{d-1}(\{ x\in H: \norm{x}_\infty \leq Y/2\})}{\sqrt{d}R_K}\geq \frac{Y^{d-1}}{\sqrt{d}R_K}.\]
The conclusion now follows by taking the difference set of these points. 
\end{proof}

\section{The construction}
In this section we use the algebraic number theory facts of the previous section to prove Theorems~\ref{th-main} and \ref{th-mainmult}.

\begin{lemma}\label{lem-main}
There exists an absolute constant $c>0$ such that the following holds. Let $K$ be a totally real number field of degree $d\geq 2$ with discriminant $\Delta_K$ and let $X,Y\geq 2$. There exists a set $A\subset \mathcal{O}_K$ such that
\[\frac{(cXY)^d}{Y\Delta_K^{3/2}}\leq \abs{A}\leq (XY/c)^d,\]
\[\Abs{AA}\leq c^{-d}Y^{1-d}\Delta_K^2\abs{A}^2,\]
and
\[\abs{A+A}\leq (e^Y/c)^d\Delta_K^{1/2}\abs{A}.\]
\end{lemma}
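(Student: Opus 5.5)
The plan is to make the Balog--Wooley style construction of the sketch precise: take $A=GP$ with $G=B^\times(Y)$ and $P$ a box of algebraic integers all of whose embeddings lie in a short interval around $X$. Two facts drive everything. The product $GP$ is direct, by Lemma~\ref{lem-unitsep}. And the lattice-point counts of Lemma~\ref{lem-ball} and Lemma~\ref{lem-ball-mult}, combined with Lemma~\ref{lem-regbound}, give all the size bounds.

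\emph{Construction of $P$.} I will redo the Blichfeldt argument from the proof of Lemma~\ref{lem-ball} with a cube of side $X/8$. This gives a translate containing at least $(X/8)^d\Delta_K^{-1/2}$ points of $\mathcal{O}_K$. Fixing one such point $p_0$, the differences $p-p_0$ lie in $\mathcal{O}_K$ and have every embedding in $[-X/8,X/8]$. Adding the rational integer $n=\lceil X\rceil$ gives a set $P$ with $\abs{P}\geq (X/8)^d\Delta_K^{-1/2}$, all of whose elements have every embedding in $[\tfrac78 X,\tfrac98X+1]$. For $X\geq 2$ these embeddings are positive, and for any $p_1,p_2\in P$ each ratio $\sigma_i(p_1)/\sigma_i(p_2)$ lies in $(\phi^{-1},\phi)$. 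Shrinking the side length by a further constant factor if needed is harmless. Trivially $P\subseteq B^+(2X)$.

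\emph{Directness and $\abs{A}$.} Suppose $u_1p_1=u_2p_2$ with $u_i\in G$ and $p_i\in P$. Then $p_1/p_2=u_2/u_1$ is a unit whose embeddings all have absolute value in $(\phi^{-1},\phi)$. By Lemma~\ref{lem-unitsep} it equals $\pm1$, and positivity of the embeddings forces $p_1=p_2$ and hence $u_1=u_2$. So $\abs{A}=\abs{G}\abs{P}$.
\begin{itemize}
\item For the lower bound, Lemma~\ref{lem-ball-mult} and Lemma~\ref{lem-regbound} give $\abs{G}\geq Y^{d-1}d^{-1/2}\Delta_K^{-1}$. Combined with the bound on $\abs{P}$, this yields $\abs{A}\geq (X/8)^dY^{d-1}d^{-1/2}\Delta_K^{-3/2}$. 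The factor $d^{-1/2}\geq 2^{-d}$ is absorbed into $c^d$.
\item For the upper bound, $\abs{G}\leq 10(5Y+1)^{d-1}$ and $\abs{P}\leq (X/4+1)^d$. Since $X,Y\geq2$, the product is at most $(XY/c)^d$.
\end{itemize}

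\emph{Products and sums.} For the product set, $AA\subseteq (GG)(PP)$ and $GG\subseteq B^\times(2Y)$, so $\abs{AA}\leq 10(10Y+1)^{d-1}\abs{P}^2$. Writing $\abs{P}^2=\abs{A}^2/\abs{G}^2$ and using the lower bound $\abs{G}^2\geq Y^{2d-2}d^{-1}\Delta_K^{-2}$ gives
\[\abs{AA}\leq C^dY^{1-d}\Delta_K^2\abs{A}^2.\]
For the sum set, every element of $G$ lies in $B^+(e^Y)$ and every element of $P$ lies in $B^+(2X)$. Hence $A\subseteq B^+(2Xe^Y)$ and $A+A\subseteq B^+(4Xe^Y)$, which has size at most $(9Xe^Y)^d$ by Lemma~\ref{lem-ball}. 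Comparing only with $\abs{P}\leq\abs{A}$, rather than with the full lower bound for $\abs{A}$, yields
\[\abs{A+A}\leq (72e^Y)^d\Delta_K^{1/2}\abs{P}\leq (e^Y/c)^d\Delta_K^{1/2}\abs{A},\]
with the claimed exponent $1/2$ on $\Delta_K$.

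The only genuinely delicate step is the construction of $P$. The interval must be short enough for all pairwise ratios to lie in $(\phi^{-1},\phi)$, which is what makes Lemma~\ref{lem-unitsep} apply, yet $P$ must still have size $\gg (cX)^d\Delta_K^{-1/2}$. The translate-and-shift argument above handles both requirements at once. Everything else is bookkeeping of constants into $c^d$.
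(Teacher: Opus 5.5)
Your proposal is correct and follows the paper's proof essentially step for step. You take $A=GP$ with $G=B^\times(Y)$ and $P$ a small box of algebraic integers recentred at an integer near $X$; directness comes from Lemma~\ref{lem-unitsep} together with positivity of the embeddings, the product bound from $AA\subseteq (GG)(PP)$ with $GG\subseteq B^\times(2Y)$, and the sum bound from $A+A\subseteq B^+(4Xe^Y)$ compared against $\abs{P}\leq\abs{A}$. The only cosmetic difference is that you build $P$ directly from a Blichfeldt translate, with $\epsilon=1/8$ and shift $\lceil X\rceil$, instead of setting $P=X+B^+(\epsilon X)$ after assuming $X$ is a large integer, so you handle all $X\geq 2$ without that reduction.
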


Theorem~\ref{th-main} is an immediate consequence, letting $K$ be a totally real field of sufficiently large degree $d\geq 2$ with $\Delta_K \leq C^d$ for some absolute constant $C>0$, as provided by Theorem~\ref{th-tower}, and choosing $Y=4C^2c^{-1}$, say.

\begin{proof}
Without loss of generality, we can assume that $X$ and $Y$ are both sufficiently large (in absolute terms), and that $X$ is an integer. Let $G=B^\times(Y)$, so that by Lemma~\ref{lem-ball-mult} and Lemma~\ref{lem-regbound}
\[Y^{d-1}d^{-1/2}\Delta_K^{-1}\leq Y^{d-1}d^{-1/2}R_K^{-1}\leq \abs{G}\leq 10(5Y+1)^{d-1}.\]
Since $GG\subseteq B^\times(2Y)$,
\[\Abs{GG}\leq (CY)^{d-1}\leq (C')^d\Delta_K\abs{G}\]
for some absolute constants $C,C'>0$. Let $\epsilon>0$ be some small absolute constant to be chosen soon, and 
\[P=X+B^+(\epsilon X),\]
so that by Lemma~\ref{lem-ball}
\[(\epsilon X)^d\Delta_K^{-1/2}\leq\abs{P}\leq(2\epsilon X+1)^d.\]
Let $A=GP$. We first claim that $\abs{A}=\abs{G}\abs{P}$, for which it suffices to prove that if $u_1/u_2=p_1/p_2$ with $u_i\in G$ and $p_i\in P$ then $u_1= u_2$. This follows since $u_1/u_2\in \mathcal{O}_K^\times$, and for all embeddings $\sigma$ and $p\in P$,
\[\sigma(p)\in [X-\epsilon X, X+\epsilon X],\]
whence 
\[\frac{1-\epsilon}{1+\epsilon}\leq \sigma(p_1/p_2)\leq \frac{1+\epsilon}{1-\epsilon}.\]
Hence, provided $\epsilon>0$ is sufficiently small and $X$ is sufficiently large (which we can assume without loss of generality), $\sigma(p_1/p_2)\in (\phi^{-1},\phi)$. So by Lemma~\ref{lem-unitsep} we have $u_1/u_2\in \{\pm 1\}$, and in fact $u_1/u_2\neq -1$ since otherwise $\sigma(p_1/p_2)=-1$. Therefore there exist constants $0<c<C$ such that
\[\frac{(cXY)^d}{Y\Delta_K^{3/2}}\leq \abs{A}\leq (CXY)^d.\]
For the product set, we note (using the trivial bound $\abs{PP}\leq \abs{P}^2$)
\[\Abs{AA}\leq \Abs{GG}\Abs{PP}\leq  c^{-d}Y^{1-d}\Delta_K^2\abs{A}^2.\]
Finally, every $\alpha\in A$ is an algebraic integer such that $\Abs{\sigma(\alpha)}\leq 2Xe^Y$ for all $\sigma$, and hence $A+A\subseteq B^+(4Xe^Y)$. By Lemma~\ref{lem-ball}
\[\abs{A+A}\leq (Ce^YX)^d\leq (C'e^Y)^d\Delta_K^{1/2}\abs{A}\]
(using $\abs{A}\geq \abs{P}\geq (\epsilon X)^d\Delta_K^{-1/2}$) for some absolute constants $C,C'>0$.
\end{proof}

A similar construction works for the proof of Theorem~\ref{th-mainmult} -- in fact here the construction is even simpler, since we can just take $A=B^\times(Y)$. 
\begin{lemma}\label{lem-mainmult}
There exists an absolute constant $c>0$ such that the following holds. Let $K$ be a totally real number field of degree $d\geq 2$ with discriminant $\Delta_K$ and let $Y\geq 2$. There exists a set $A\subset \mathcal{O}_K$ such that
\[\frac{(cY)^d}{Y\Delta_K^{3/2}}\leq \abs{A}\leq (Y/c)^d\]
and
\[\max(\abs{kA},\Abs{A^{(k)}})\leq (ke^Y/c)^d\textrm{ for any }k\geq 2.\]
\end{lemma}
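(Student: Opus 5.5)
Take $A=B^\times(Y)$, as the remark before the lemma suggests, and read off all three bounds from the lattice-counting results of the previous section.

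\textbf{Size of $A$.} By Lemma~\ref{lem-ball-mult} combined with Lemma~\ref{lem-regbound},
\[Y^{d-1}d^{-1/2}\Delta_K^{-1}\leq \abs{A}\leq 10(5Y+1)^{d-1}.\]
Since $d^{-1/2}\geq 2^{-d}$ and $\Delta_K\geq 1$, the left side is at least $(cY)^d/(Y\Delta_K^{3/2})$ for a suitable absolute $c$; the factor $\Delta_K^{-3/2}$ is just wasteful and is chosen to match Lemma~\ref{lem-main}. For the upper bound, $10(5Y+1)^{d-1}\leq (Y/c)^d$ once $c$ is small, using $Y\geq 2$ and $d\geq 2$.

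\textbf{Product set.} The unit group is closed under multiplication, and $\abs{\log\abs{\sigma_i(u_1\cdots u_k)}}\leq kY$. Hence $A^{(k)}\subseteq B^\times(kY)$, and Lemma~\ref{lem-ball-mult} gives
\[\Abs{A^{(k)}}\leq 10(5kY+1)^{d-1}.\]
This is at most $(ke^Y/c)^d$, since $Y\ll e^Y$ and the constant $10$ is absorbed into $c^{-d}$. Alternatively, one can simply use $B^\times(kY)\subseteq B^+(e^{kY})$, but that is much worse. The better route is the unit-lattice bound, which is only polynomial in $kY$.

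\textbf{Sum set.} Every $u\in A$ satisfies $\abs{\sigma_i(u)}\leq e^Y$ for all $i$, that is, $A\subseteq B^+(e^Y)$. Hence every element of $kA$ is an algebraic integer all of whose conjugates have absolute value at most $ke^Y$, so $kA\subseteq B^+(ke^Y)$. The upper bound of Lemma~\ref{lem-ball} then gives
\[\abs{kA}\leq (2ke^Y+1)^d\leq (3ke^Y)^d.\]
This is of the required form with $c\leq 1/3$.

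\textbf{Remaining work.} No real obstacle is expected; the only care needed is in choosing one absolute $c$ that works simultaneously in all three estimates. The main point is the product bound: it uses the multiplicative lattice count for $B^\times(kY)$ rather than the additive embedding, because only the former gives a base polynomial in $k$ and $Y$ rather than exponential in $kY$.
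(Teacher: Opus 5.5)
Your proposal is correct and follows the paper's proof essentially line for line. You take $A=B^\times(Y)$, bound $\abs{A}$ via Lemma~\ref{lem-ball-mult} and Lemma~\ref{lem-regbound}, bound $\Abs{A^{(k)}}$ via $A^{(k)}\subseteq B^\times(kY)$ and the unit-lattice count, and bound $\abs{kA}$ via $kA\subseteq B^+(ke^Y)$ and Lemma~\ref{lem-ball}; your constant bookkeeping (using $d^{-1/2}\geq 2^{-d}$, $\Delta_K\geq 1$ and $Y\leq e^Y$) supplies exactly the details the paper leaves implicit.
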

Once again, Theorem~\ref{th-mainmult} is an immediate consequence, letting $K$ be a totally real number field of large degree $d$ with $\Delta_K\leq C^d$ for some constant $C>0$ and choosing $Y=(C')^{1/\epsilon}$ for some other constant $C'$, so that
\[\max(\abs{kA},\Abs{A^{(k)}})\leq (ke^Y/c)^d\leq \abs{A}^{C^{1/\epsilon}+\epsilon \log k}.\]
This proves the second statement; to prove the first take $\epsilon=C/\log\log k$ for some sufficiently large constant $C>0$ (note that the choice of $A$ then depends on $k$).
\begin{proof}
We argue as in the previous lemma, except that we simply take $A=G=B^\times(Y)$, so that by Lemma~\ref{lem-ball-mult} and Lemma~\ref{lem-regbound}
\[Y^{d-1}d^{-1/2}\Delta_K^{-1}\leq \abs{A}\leq 10(5Y+1)^{d-1}.\]
For any $k\geq 2$, since $A^{(k)}\subseteq B^\times(kY)$,
\[\Abs{A^{(k)}}\leq (CkY)^{d-1}\]
for some absolute constant $C>0$. Furthermore, $A\subseteq B^+(e^Y)$, and hence $kA\subseteq B^+(ke^Y)$, so
\[\abs{kA}\leq (Cke^Y)^d.\qedhere\]
\end{proof}

\section{Numerical estimates}\label{sec-numerical}
We have not tried to keep track of explicit constants in the proofs above, since these would obscure the main ideas of the proof, and the calculations become quite messy. In this section we sketch what a quantified version of the construction would give, yielding in particular $c\geq 0.00000087$. We have not made any attempt to change the structure of the argument, even slightly, to optimize the constants. Doing so would likely yield a better value, although we expect a $c$ obtained by any variant of this kind of argument to be very small.

An earlier version of our argument constructed a field with small split primes, as in the disproof of the unit distance conjecture described in \cite{ABGLSSTWW2026UnitDistance}, and considered elements divisible only by these ideals instead of units. To our surprise, the existence of small split primes turned out to be completely unnecessary, resulting in the simplified version presented here, but it is likely that an optimized version would include these primes.

We first state variants of our lemmas with all the different constants appearing named, and then give explicit values for these constants, before stating a version of the main result for these constants, and then finally giving an explicit value for the main result.

\begin{enumerate}
\item In Lemma~\ref{lem-regbound} we have 
\[R_K \leq c_1^{-d}\Delta_K.\]
\item In Theorem~\ref{th-tower} we have $\Delta_K\leq C_2^d$.
\item In Lemma~\ref{lem-ball}
\[X^d\Delta_K^{-1/2}\leq \Abs{B^+(X)}\leq (2X+1)^d.\]
\item In Lemma~\ref{lem-unitsep} $u\in \{\pm 1\}$ whenever $\Abs{\sigma_i(u)}\in [\tfrac{1}{1+c_3},1+c_3]$.
\item In Lemma~\ref{lem-ball-mult}
\[(1-o(1))^dY^{d-1}R_K^{-1}\leq \Abs{B^\times(Y)}\leq 10(C_4Y+1)^{d-1}.\]
\end{enumerate} 

For example, following the proofs given above we can take 
\[c_1\geq 3.819,\quad C_2\leq 857.57, \quad c_3\geq 0.618,\quad\textrm{and}\quad C_4\leq 4.16.\]
The values of most of the constants here are immediate from the proofs presented above. The constant $C_2$ is sometimes called Martinet's constant (see \cite{HaMa01}). Hajir, Maire, and Ramakrishna \cite[\S3.3.3]{hajir2021shafarevich} proved that we can take $C_2 \leq 857.57$.

For the rest of this sketch we will use the notation $\ls$ and $\gs$ to hide losses of $(1+o(1))^d$ (which are inconsequential since we can take $d$ arbitrarily large). In general, our construction leads to
\[(c_1/C_2)^dY^{d}\ls\abs{G}\leq \abs{GG}\ls (2C_4Y+1)^{d}\]
and, with $\epsilon= \tfrac{c_3}{2+c_3}$ (which is permissible provided $X$ is an integer), 
\[\abs{P}\gs(\epsilon C_2^{-1/2})^dX^d.\]
By discarding elements of $P$ and $G$ if necessary, we can assume that the lower bounds on $\abs{G}$ and $\abs{P}$ are attained, and 
\[(c_1\epsilon C_2^{-3/2})^d(XY)^d\ls \abs{A}.\]
Now 
\[\abs{AA}\ls\brac{\frac{C_2^2(2C_4Y+1)}{c_1^2Y^2}}^d\abs{A}^2\]
and 
\begin{align*}
\abs{A+A}
&\ls (4(1+\epsilon)Xe^Y+1)^d\\
&\ls\brac{\frac{(4(1+\epsilon)Xe^Y+1)C_2^3}{c_1^2\epsilon^2X^2Y^2}}^d\abs{A}^2.
\end{align*}
For example, with the constant choices above we have
\[\abs{AA}\ls\brac{\frac{419531}{Y}+\frac{50425}{Y^2}}^d\abs{A}^2\]
and 
\[\abs{A+A}\ls\brac{\frac{3836812879}{XY^2}e^Y+\frac{776017933}{X^2Y^2}}^d\abs{A}^2,\]
while
\[\abs{A}\gs(0.000035XY)^d.\]
A rough approximation to the optimal choice is to take $X=\lfloor e^{1140402}\rfloor$ and $Y=1140402$, which leads to arbitrarily large $A\subset \bbr$ with
\[\max(\abs{A+A},\abs{AA})\leq \abs{A}^{2-0.00000087}.\]

\section{Linear equations in a multiplicative group}\label{sec-linear}
Let $\Gamma\leq \bbc^\times$ be a multiplicative group. A natural question is how many solutions the equation
\begin{equation}\label{eq-sunit}
x_1+\cdots+x_k=1
\end{equation}
can have with $x_i\in \Gamma$. We are concerned only with non-degenerate solutions, which are those such that $\sum_{i\in I}x_i\neq 0$ for every non-empty $I\subseteq \{1,\ldots,k\}$. Building on a sequence of earlier results, Evertse, Schlickewei, and Schmidt \cite{ESS02} proved the following.
\begin{theorem}[Evertse-Schlickewei-Schmidt]\label{th-ess}
If $\Gamma\leq \bbc^\times$ is a multiplicative group of rank $d$ then, for any $k\geq 2$, the number of non-degenerate solutions to \eqref{eq-sunit} is at most
\[\exp(C_kd)\]
for some constant $C_k>0$ depending only on $k$.
\end{theorem}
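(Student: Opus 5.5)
The plan is to follow the classical strategy for unit equations: split the non-degenerate solutions of \eqref{eq-sunit} into solutions of ``small'' and ``large'' height. Small solutions are counted by a lattice-point argument. Large solutions are shown, via a quantitative Subspace Theorem, to lie in few proper linear subspaces, and we then induct on $k$. The whole point is to make every count exponential in $d$ with a base depending only on $k$. Throughout, the solution vector $\mathbf{x}=(x_1,\ldots,x_k)$ is viewed as the projective point $(x_1:\cdots:x_k:-1)$, and we use the absolute logarithmic height $h$.

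\emph{Step 1: reduction to a group of finite rank inside $\overline{\bbq}^\times$.} Enlarging $\Gamma$ to its division group $\overline{\Gamma}$ does not change the rank, and it only helps the upper bound. A specialization argument then moves to $\overline{\Gamma}\subset\overline{\bbq}^\times$. We choose a finitely generated ring containing generators of $\Gamma$ and specialize it to $\overline{\bbq}$ by a homomorphism. This homomorphism must be injective on the (finitely many, by the argument below) relevant solutions and must preserve non-degeneracy, which is possible because only finitely many polynomial conditions have to be avoided.

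\emph{Step 2: small solutions.} Fix a threshold $H=H(k)$. The solutions with $\max_i h(x_i)\leq H$ are points of $\overline{\Gamma}^k$ of bounded height. To count them we need a uniform lower bound for heights of non-torsion elements on the relevant scale; this plays the role of the separation in Lemma~\ref{lem-unitsep}. The cleanest route uses the fact that $\overline{\Gamma}/\text{torsion}\otimes\bbr$ with the height is a normed space of dimension $d$: after a packing argument, the points of $\overline{\Gamma}$ with $h\leq H$ fall into at most $(C H/\delta)^{d}$ classes modulo torsion. Here $\delta$ is a lower bound for the height on the relevant part, taken from Schinzel's or Dobrowolski's bound. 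The torsion ambiguity is then removed using non-degeneracy, with the contribution controlled as in the earlier ``few solutions in roots of unity'' results. This gives $\exp(C_k d)$ small solutions.

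\emph{Step 3: large solutions via the Subspace Theorem.} For solutions of large height we apply the absolute quantitative Subspace Theorem of Evertse--Schlickewei to the linear forms $X_1,\ldots,X_k$ and $X_1+\cdots+X_k$ at the places where the $x_i$ are small. Because the $x_i$ are $S$-unit-like, a product formula argument shows that the double product of these forms is $\ll H(\mathbf{x})^{-\epsilon}$. It follows that all large solutions lie in at most $c(k)^{d}$ (indeed a number bounded in terms of $k$ times $e^{O(d)}$) proper subspaces of $\bbq^{k+1}$. On each subspace, a non-trivial linear relation among the $x_i$ plus non-degeneracy lets us rewrite the equation as equations of the same shape with fewer variables over $\overline{\Gamma}$. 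The induction hypothesis then bounds each piece by $\exp(C_{k-1}d)$, and multiplying the counts gives $\exp(C_k d)$.

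The main obstacle is Step 3. The Subspace Theorem must be applied in a form whose number of exceptional subspaces is independent of the number field and of the size of $S$. Such a form is the absolute, degree-uniform quantitative version; this is exactly where Evertse--Schlickewei's work is indispensable, and it is the reason the exponent depends on $d$ only linearly. I would invoke that quantitative theorem as a black box rather than reprove it. I would then spend the real effort on the bookkeeping, so that the choice of places where the $x_i$ are small (a $k$-dependent number of ``patterns'') and the induction on subspaces each cost at most a factor $e^{O_k(d)}$.
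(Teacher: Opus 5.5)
The paper does not prove this theorem; it is quoted from \cite{ESS02}. Judged on its own, your outline has two genuine gaps.

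\textbf{Step 2 (small solutions).} A packing count of the elements of $\overline{\Gamma}$ of height at most $H$ needs a uniform separation $\delta$, and no such $\delta$ exists. Modulo torsion, $\overline{\Gamma}$ is $\bbq^d$, which is dense in $\overline{\Gamma}\otimes\bbr\cong\bbr^d$. For example, if $2\in\Gamma$ then $2^{1/m}\in\overline{\Gamma}$ has $h(2^{1/m})=\frac{\log 2}{m}$, so infinitely many classes have height at most $H$. Staying inside $\Gamma$ does not help either: $\Gamma=\langle 2^{1/m}\rangle\leq\bbr^\times$ has rank $1$ but about $2mH/\log 2$ elements of height at most $H$. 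Schinzel's bound needs a totally real (or CM) field, and Dobrowolski's bound decays with the degree, which is unbounded here. Lemma~\ref{lem-unitsep} works in the paper only because all the elements are units of one totally real field.

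So small solutions cannot be counted as group elements; the count has to use the equation. One way is to cover the ball of radius $H$ in the $d$-dimensional normed space $\overline{\Gamma}\otimes\bbr$ by $(1+4H/\theta)^d$ balls of radius $\theta/2$, and fix a solution $\mathbf{y}$ in each ball that contains one. Any other solution $\mathbf{x}$ in that ball gives $\mathbf{z}=\mathbf{x}\mathbf{y}^{-1}$ of height $O(\theta)$ solving $y_1z_1+\cdots+y_kz_k=1$ non-degenerately. What then bounds each ball is a small-points theorem of Bogomolov--Zhang type for this hyperplane in $\mathbb{G}_m^k$, uniform in the coefficients. This small-points ingredient is what \cite{AmVi09} later sharpened.

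\textbf{Step 3 (large solutions).} This gap is not bookkeeping. Choosing, at each place of a set $S$ outside which the $x_i$ are units, which of the $k+1$ forms to drop is the classical $S$-unit argument. It produces $(k+1)^{|S|}$ patterns and hence a bound exponential in $|S|$. But $|S|$ is not controlled by the rank: $\Gamma=\langle p_1\cdots p_s\rangle$ has rank $1$ and needs $|S|\geq s+1$. The absolute quantitative Subspace Theorem makes the count uniform in the field and in $S$ only for one \emph{fixed} system of twisted heights. It does nothing to reduce the number of patterns, so it is not the source of the linear dependence on $d$, contrary to what you claim.

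The missing idea is to index the twists by direction rather than by places. The map $\mathbf{x}\mapsto(\log|x_i|_v)_{i,v}$ is linear on the $d$-dimensional space $\overline{\Gamma}\otimes\bbr$. After dividing by $h(\mathbf{x})$, the solutions therefore lie on the unit sphere of a $d$-dimensional normed space, which can be covered by $e^{O_k(d)}$ small caps. For the large solutions in one cap, a single twisted height suffices, with exponents at every place read off from the centre of the cap. The Subspace Theorem then contributes only a factor depending on $k$. These two coverings in a $d$-dimensional space (here and in Step 2) are where $\exp(C_kd)$ comes from.

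\textbf{Two smaller repairs.} First, ``finitely many, by the argument below'' in Step 1 is circular; specialize an arbitrary finite set of solutions instead. Second, the equations produced by your induction on subspaces carry coefficients. You should absorb these into $\Gamma$, which raises its rank by at most $k$.
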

They gave $C_k$ as an explicit function of $k$, which has been improved (most recently by Amoroso and Viada \cite{AmVi09}, and is now polynomial in $k$), but here we are most concerned with the dependence on the rank $d$.

Erd\H{o}s, Stewart, and Tijdeman \cite{EST88} constructed,\footnote{In \cite{EST88} they just address the case $k=2$, but this is straightforward to generalise to arbitrary $k\geq 2$ as indicated in \cite{ESS02}.} for any $k\geq 2$ and large enough $d$, multiplicative groups $\Gamma\leq \bbq^\times$ of rank $d$ in which the number of non-degenerate solutions to \eqref{eq-sunit} is at least
\[\exp\brac{ c_k \brac{\frac{d}{\log d}}^{1-\frac{1}{k}}}\]
for some $c_k>0$. This has been improved in some regimes by Konyagin and Soundararajan \cite{KoSo07} (again for $\Gamma\leq \bbq^\times$).

It has been conjectured (see, for example, \cite{ESS02}) that the dependence on $d$ in Theorem~\ref{th-ess} can be improved, perhaps to $\exp(C_kd^{1-c_k})$ for some $c_k>0$. Our construction is also able to disprove this, and shows that the linear dependence on $d$ in the exponent is the best possible. Again, we stress that our construction makes heavy use of algebraic number fields of large degree, and so it remains possible that the dependence on $d$ in the upper bound can be improved if $\Gamma\leq \bbq^\times$, for example.

\begin{theorem}
There is an absolute constant $C>0$ such that the following holds. Let $k\geq C$ be any integer. There exist infinitely many $d\geq 2$ and multiplicative groups $\Gamma\leq\bbr^\times$ of rank $\leq d$ such that there are at least
\[\exp((C^{-1}k\log k)d)\]
many non-degenerate solutions to \eqref{eq-sunit}.
\end{theorem}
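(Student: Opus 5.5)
The plan is to reuse the multiplicative box $B^\times(Y)$ from Lemma~\ref{lem-ball-mult} and find many solutions by pigeonholing on $k$-fold sums. Fix $k\geq C$ and let $K$ be a totally real field of large degree $d$ with $\Delta_K\leq C_0^d$, as provided by Theorem~\ref{th-tower}. Fix one real embedding $\sigma_1$ and regard $K\subset\bbr$ via $\sigma_1$.

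\emph{Choosing the units.} Let $G$ consist of the units $u\in B^\times(Y)$ with $\sigma_1(u)>0$. Since $u$ and $-u$ lie in $B^\times(Y)$ together, exactly one of each pair is kept. By Lemmas~\ref{lem-ball-mult} and \ref{lem-regbound},
\[\abs{G}\geq \tfrac12 Y^{d-1}d^{-1/2}\Delta_K^{-1}.\]
I will take $Y=k$.

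\emph{Pigeonholing.} Consider the map $(g_1,\ldots,g_k)\mapsto g_1+\cdots+g_k$ on $G^k$. Since $G\subseteq B^+(e^Y)$, every such sum lies in $B^+(ke^Y)$. By Lemma~\ref{lem-ball} this set has size at most $(3ke^Y)^d$. Hence some $s\in\mathcal{O}_K$ is the sum for at least
\[N:=\frac{\abs{G}^k}{(3ke^Y)^d}\]
tuples. Moreover $s>0$ under $\sigma_1$, since each $g_i$ is positive there.

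\emph{The group and the solutions.} Let $\Gamma$ be the subgroup of $\bbr^\times$ generated by $\mathcal{O}_K^\times$ and $s$. By Dirichlet's theorem its free rank is at most $(d-1)+1=d$; the torsion is only $\pm1$. For each of the $N$ tuples, set $x_i=g_i/s$. Then $x_i\in\Gamma$, $x_i>0$, and $x_1+\cdots+x_k=1$. Because $s$ is fixed, distinct tuples give distinct solutions. Every subsum of positive reals is nonzero, so all these solutions are non-degenerate.

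\emph{Counting.} With $Y=k$ and $\Delta_K\leq C_0^d$,
\[\log N\geq k(d-1)\log k-kd\log(2C_0)-\tfrac k2\log d-d\bigl(k+\log(3k)\bigr).\]
Write this as $kd\log k-kd\,O(1)-O(k\log d)-O(d\log k)$. For $k\geq C$ with $C$ large, and $d$ large in terms of $k$, this is at least $\tfrac12 kd\log k$. That gives the claimed $\exp((C^{-1}k\log k)d)$ with room to spare.

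\emph{Main obstacle.} The delicate point is the balance of $Y$: the sumset bound costs $e^{Yd}$, while the gain is only $Y^{kd}$. So $Y$ must be about $k$ (anything with $\log Y\asymp\log k$ and $Y\leq k$ works) to get $k\log k$ in the exponent. Everything else reduces to the lattice-point bounds already established.
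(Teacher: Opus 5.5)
Your proposal is correct and follows the paper's proof essentially step for step. Both arguments keep the positive half of the unit box $B^\times(Y)$ and pigeonhole the $k$-fold sums inside $B^+(ke^Y)$. Both then divide by the popular sum $s$, adjoin $s$ to the unit group to get rank at most $d$, use positivity for non-degeneracy, and choose $Y=k$.
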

\begin{proof}
Let $K$ be a totally real number field of degree $d$ with $\Delta_K\leq C^d$ for some constant $C>0$, as provided by Theorem~\ref{th-tower}. Let $A$ be constructed as in Lemma~\ref{lem-mainmult} (so it is a ball of lattice points in the multiplicative unit lattice of $K$), viewed as a subset of $\bbr$. Note that the unit group has rank $d-1$. Losing only a factor of $2$ in $\abs{A}$ we can assume that $a>0$ for all $a\in A$. We therefore obtain, for any $Y\geq 2$, infinitely many $d$ with accompanying $A$ (contained in a multiplicative group of rank $d-1$) such that $\abs{A}\geq (cY)^{d-1}$ and
\[\abs{kA}\leq (Cke^Y)^d\]
for some constants $c,C>0$. By the pigeonhole principle there exists some $x\in kA$ such that
\[a_1+\cdots+a_k=x\]
has at least
\[\frac{\abs{A}^k}{\abs{kA}}\geq\frac{(cY)^{k(d-1)}}{(Cke^Y)^d}\geq Y^{-k}\left(\frac{(cY)^k}{Cke^Y}\right)^d\]
many solutions. Letting $z_i=a_i/x$, and expanding the group of units with the generator $x$, we achieve at least this many solutions to \eqref{eq-sunit} in a multiplicative group of rank at most $d$. Furthermore, since $z_i>0$, all of these solutions are automatically non-degenerate.

The conclusion then follows from taking $Y=k$.
\end{proof}

A related question is to bound the number of solutions to \eqref{eq-sunit} in the group of units $\mathcal{O}_K^\times$. Evertse \cite{Ev83} proved that the number of solutions to $x_1+x_2=1$ with $x_1,x_2\in \mathcal{O}_K^\times$ is at most $3\cdot 7^{3d}$, where $d$ is the degree of $K$. (Such $x_i$ are often called `exceptional units'.) Niklasch \cite{Ni97} considered this question further and in particular, generalising a conjecture of Stewart (see \cite[p.120]{EGST88}), conjectured \cite[Conjecture 4.2]{Ni97} the sub-exponential upper bound of
\[\exp(d^{2/3+o(1)}).\]
We are able to prove that if we consider the analogous question with $2$ variables replaced by a sufficiently large (but still a constant) number of variables, the analogous conjecture is false, and in fact there are exponentially in $d$ many solutions.
\begin{theorem}\label{th-unitlower-body}
There exists an integer $k\geq 2$ and an absolute constant $C>1$ such that, for infinitely many $d$, there exists a number field $K$ of degree $d$ such that there are at least $C^d$ non-degenerate solutions to \eqref{eq-sunit} with $x_i\in \mathcal{O}_K^\times$.
\end{theorem}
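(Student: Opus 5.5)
The plan is to adapt the pigeonhole argument of the previous theorem. There, the common value $x$ of $a_1+\cdots+a_k$ is not a unit, so dividing by it leaves $\mathcal{O}_K^\times$. The fix is to count coincidences between two $k$-fold sums of units rather than a single sum. Let $K$ be a totally real field from Theorem~\ref{th-tower} with $\Delta_K\leq C^d$, and let $A=B^\times(Y)$ be as in Lemma~\ref{lem-mainmult}, restricted (losing a factor $2$) to units with $\sigma_1(a)>0$. By Lemma~\ref{lem-mainmult} we have $\abs{A}\geq (cY)^{d}/(Y\Delta_K^{3/2})$ and $\abs{kA}\leq (Cke^Y)^d$.

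By Cauchy--Schwarz, the number $E$ of $2k$-tuples in $A^{2k}$ with
\[a_1+\cdots+a_k=a_{k+1}+\cdots+a_{2k}\]
satisfies $E\geq \abs{A}^{2k}/\abs{kA}$. For each such tuple, dividing by $a_{2k}$ gives
\[\frac{a_1}{a_{2k}}+\cdots+\frac{a_k}{a_{2k}}-\frac{a_{k+1}}{a_{2k}}-\cdots-\frac{a_{2k-1}}{a_{2k}}=1,\]
which is a solution to \eqref{eq-sunit} in $2k-1$ variables with every $x_i\in\mathcal{O}_K^\times$, since $-1$ is a unit. This map is at most $\abs{A}$-to-one, so we get at least $\abs{A}^{2k-1}/\abs{kA}$ solutions. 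This is at least
\[\left(\frac{(c'Y)^{2k-1}}{Cke^Y}\right)^{d}\]
up to factors subexponential in $d$ once $\Delta_K\leq C^d$ is absorbed. We then fix $Y$ to be a large absolute constant and take $k$ large in terms of $Y$, so the base exceeds some $C_0>1$. This gives $C_0^d$ solutions with $2k-1$ variables.

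The main obstacle is non-degeneracy. Because the signs are now mixed, a solution can have a vanishing subsum. Such a subsum corresponds to splitting the signed $2k$-tuple $(a_1,\ldots,a_k,-a_{k+1},\ldots,-a_{2k})$ into two nonempty parts, each summing to zero. A part consisting only of $+$ signs (or only of $-$ signs) cannot vanish, because $\sigma_1(a)>0$ for all $a\in A$. So every vanishing part contains both signs, say $p$ plus and $q$ minus terms. The number of such parts is at most $E_{p,q}$, the number of solutions of $a_1+\cdots+a_p=a'_1+\cdots+a'_q$.

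The degenerate count is therefore at most $\sum_{I}E_{p_I,q_I}E_{k-p_I,k-q_I}$, and this must be shown to be at most $E/2$. The trivial bound $E_{p,q}\leq\abs{A}^{p+q-1}$ is too weak here, since $\abs{kA}$ is far larger than $\abs{A}$. Instead I would try to show that $E$ is dominated by its non-degenerate part by a counting/induction argument, comparing $E_{p,q}$ with $\abs{A}^{p+q}/\max(\abs{pA},\abs{qA})$-type quantities. Should this fail, the fallback is a cruder device that removes the degenerate solutions outright: work in two widely separated scales of $Y$, or restrict all $a_i$ on one side to a sub-box of units that is far larger in some embedding $\sigma_j$. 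Then any mixed-sign vanishing subsum is forced to be impossible by a size comparison in that embedding, at the cost of only a constant factor in the base of the exponential count. I expect this control of degenerate solutions, rather than the counting itself, to be the crux.
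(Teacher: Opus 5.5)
Your skeleton is the paper's: Cauchy--Schwarz on the $k$-fold additive energy $E$ of $B^\times(Y)$, then dividing by one unit and using $-1\in\mathcal{O}_K^\times$. Your reduction of non-degeneracy to the absence of a proper vanishing subsum of the signed $2k$-tuple is also correct. But the step you call the crux is left unproved, and neither suggested route closes it.

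\begin{itemize}
\item Comparing $E_{p,q}$ with $\abs{A}^{p+q}/\max(\abs{pA},\abs{qA})$ points the wrong way. For $p=q$ that is a Cauchy--Schwarz \emph{lower} bound for $E_{p,p}$. An upper bound of that shape would need the representation functions to be nearly flat, which you have no way to prove.
\item The fallback defeats itself. The full relation $a_1+\cdots+a_k-a_{k+1}-\cdots-a_{2k}=0$ is itself a mixed-sign vanishing sum. So any size comparison in one embedding that forbids all mixed-sign vanishing subsums also forbids every solution you are counting.
\end{itemize}

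The missing idea is that you need no absolute bound on the degenerate tuples, only one relative to $E$, and H\"older's inequality supplies it; this is the step the paper uses. Identify $\mathcal{O}_K\cong\bbz^d$ additively and let $f(\theta)=\sum_{a\in A}e^{2\pi i a\cdot\theta}$ on $(\bbr/\bbz)^d$. Suppose $I$ indexes a vanishing part of size $m$; then $2\leq m\leq 2k-2$, since $0\notin A$. The number of tuples with $\sum_{I}\pm a_i=0=\sum_{I^c}\pm a_i$ is at most $\int\abs{f}^m\int\abs{f}^{2k-m}$. Interpolating between $\int\abs{f}^2=\abs{A}$ and $\int\abs{f}^{2k}=E$ gives
\[\int\abs{f}^m\int\abs{f}^{2k-m}\leq \abs{A}^{\frac{k}{k-1}}E^{\frac{k-2}{k-1}}.\]
Summing over at most $4^k$ sets $I$, the degenerate tuples number at most
\[4^kE\brac{\abs{A}^k/E}^{1/(k-1)}\leq 4^kE\brac{\abs{kA}/\abs{A}^k}^{1/(k-1)}.\]
For $Y$ large and $k$ large in terms of $Y$, the ratio $\abs{kA}/\abs{A}^k$ is exponentially small in $d$, so this is at most $E/2$ once $d$ is large. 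After AM--GM this is the paper's bound: the energy is at most $C^{k^2}\abs{A}^k+X$, where $X$ is the non-degenerate count. With this step added your argument is complete, and the restriction to $\sigma_1(a)>0$ becomes unnecessary.
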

This is, in hindsight, a simple consequence of the fact that units of bounded height still have small height after $O(1)$ many sums, and so the size of their $k$-fold sumset is small. To highlight the simplicity of the example we will present the construction from first principles, at the cost of some slight repetition of earlier arguments.
\begin{proof}
Let $C>0$ be the constant provided by Theorem~\ref{th-tower}, and let $K$ be a number field of degree $d$ and discriminant $\Delta_K\leq C^d$. Let $Y\geq 1$ be some constant to be chosen later, and let $A=B^\times(Y)\subset \mathcal{O}_K^\times$, so that by Lemma~\ref{lem-regbound} and Lemma~\ref{lem-ball-mult}
\[\abs{A}\geq Y^{d-1}d^{-1/2}C^{-d}.\]
Since $B^\times(Y)\subseteq B^+(e^Y)$, $kA\subseteq B^+(ke^Y)$, and hence by Lemma~\ref{lem-ball}
\[\abs{kA}\leq (2ke^Y+1)^d\leq (3ke^Y)^d,\]
say. By the Cauchy-Schwarz inequality it follows that 
\[\#\{x_1+\cdots+x_k=y_1+\cdots+y_k : x_i,y_i\in A\}\geq \frac{\abs{A}^{2k}}{\abs{kA}}\geq \abs{A}^{3k/2},\]
say, provided we first choose $Y$ to be some large constant depending on $C$, and then $k$ some larger constant depending on $Y$. By H\"{o}lder's inequality (see, for example, the proof of \cite[Lemma 5]{ABP25}) the left-hand side is at most
\[C^{k^2}\abs{A}^k+X\]
for some constant $C>1$, where $X$ counts the number of solutions to $x_1+\cdots-y_k=0$ in which no subsum on the left-hand side vanishes. Hence $X\geq \abs{A}^{k/4}$, say, provided $d$ is sufficiently large, which concludes the proof (using that $-1\in \mathcal{O}_K^\times$ and dilating by some fixed $y_k\in A$).
\end{proof}
\section{Variants}\label{sec-variants}

In this section, we discuss three variants of our argument. The first, which requires only minor modifications, disproves the sum-product conjecture in the $p$-adic numbers for each prime $p$. The bounds obtained are uniform in $p$, though we do not make them explicit.

The second, which again requires only minor modifications, produce counterexamples to the strongest form of the sum-product conjecture in all sufficiently large finite fields $\mathbb F_p$ of prime order.

The third, which requires a complete rewrite of the argument, disproves the sum-product conjecture in certain fields of formal Laurent series in characteristic $p$. The bounds on $|A+A|$ and $|AA|$ obtained this way get worse as the characteristic $p$ grows, but for small characteristics, the bounds are much stronger than those obtained from the real version of the argument.

\subsection{The \texorpdfstring{$p$}{p}-adics} We now explain the $p$-adic variant. If the field $K$ in Lemma \ref{lem-main} has a prime lying over $p$ that is split, for example, if $p$ splits completely in $K$, then $K$ embeds into $\mathbb Q_p$ and thus  the set $A \subseteq \mathcal O_K$ constructed in Lemma \ref{lem-main} embeds into $\mathbb Z_p$. Thus, to prove the analogue of Theorem \ref{th-headline} in $\mathbb Z_p$, it suffices to prove the following variant of Theorem \ref{th-tower}.

\begin{lemma}\label{field-existence-split-p}  There exists an absolute constant $C>0$ such that, for every $d$ a power of $2$, there exists a totally real number field $K$ with degree $d$ in which the prime $p$ splits completely such that $\Delta_K\leq C^d$. \end{lemma}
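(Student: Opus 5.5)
The plan is to build $K$ as a layer of a $2$-class field tower with splitting conditions, starting from a real quadratic field $k$ whose discriminant is bounded independently of $p$. Every layer of such a tower is unramified over $k$, so its root discriminant equals that of $k$. Because the tower is a pro-$2$ tower over a quadratic field, every layer has degree a power of $2$ over $\bbq$. This route is independent of Theorem~\ref{th-tower}.

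\emph{Base field.} Fix once and for all a list of $N$ (say $N=40$) primes $q_1,\ldots,q_N\equiv 1\pmod 8$; these are absolute constants. Given $p$, discard $p$ from the list if it appears there. Consider the quadratic characters $q_i\mapsto \left(\frac{q_i}{p}\right)\in\{\pm1\}$ (for $p=2$ every $q_i$ is $1 \bmod 8$, so $2$ splits regardless). By pigeonhole we can choose a subset $I$ with $\abs{I}= 2m$, where $m\ge 8$, on which the characters multiply to $1$: pair up primes with equal character value. Set $D=\prod_{i\in I}q_i$ and $k=\bbq(\sqrt D)$. Then the following hold.
\begin{itemize}
\item $D\equiv 1\pmod 8$ and $D$ is a nonzero square mod $p$, so $p$ splits in $k$, say $p\mathcal{O}_k=\mathfrak p\mathfrak p'$.
\item $k$ is real.
\item $D\le \prod_{i=1}^N q_i=:D_0$, an absolute constant.
\end{itemize}

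\emph{Large $S$-class group.} Let $S$ consist of $\mathfrak p,\mathfrak p'$ and the two real places. We want the maximal unramified $2$-extension $k_S$ of $k$ in which every place of $S$ splits completely to be infinite. By genus theory, the genus field $\bbq(\sqrt{q_i}:i\in I)$ is totally real and unramified over $k$ (all $q_i\equiv1\pmod 4$). Hence the ordinary (wide) class group of $k$ has $2$-rank at least $\abs{I}-1$. The $S$-class group is the quotient of the class group by the classes of $\mathfrak p,\mathfrak p'$. Since $\mathfrak p\mathfrak p'=(p)$ is principal, these classes generate a cyclic subgroup, so the $2$-rank drops by at most $1$. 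Thus $\mathrm{rk}_2\,\mathrm{Cl}_S(k)\ge \abs{I}-2$.

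\emph{Infinite tower.} I will then invoke the Golod--Shafarevich criterion with splitting conditions, in the form used by Hajir--Maire and Martinet. The group $\mathrm{Gal}(k_S/k)$ is infinite as soon as
\[
\mathrm{rk}_2\,\mathrm{Cl}_S(k)\ \ge\ 2+2\sqrt{\mathrm{rk}\,\mathcal{O}_{k,S}^\times+1}.
\]
Here the $S$-unit rank is $1+2=3$, so the right-hand side is $6$, which is satisfied once $\abs{I}\ge 8$. I expect this step to be the main obstacle: quoting the correct version of the criterion, with the $S$-split condition, the ordinary versus narrow class group, and the treatment of $p=2$ (where the $q_i\equiv1 \bmod 8$ condition is used). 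Everything else is elementary.

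\emph{Conclusion.} Since $\mathrm{Gal}(k_S/k)$ is an infinite pro-$2$ group, it has open normal subgroups of index $2^j$ for every $j$. The fixed fields $K_j$ of these subgroups satisfy the following.
\begin{itemize}
\item $K_j$ is totally real, since the real places split completely.
\item $p$ splits completely in $K_j$, since $p$ splits in $k$ and $\mathfrak p,\mathfrak p'$ split completely in $K_j/k$.
\item $[K_j:\bbq]=2^{j+1}$.
\item $K_j/k$ is unramified, so $\Delta_{K_j}^{1/d}=\Delta_k^{1/2}\le D_0^{1/2}$.
\end{itemize}
Taking $C=D_0^{1/2}$ handles all $d=2^{j+1}\ge 2$. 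The case $d=1$ is trivial with $K=\bbq$, giving all powers of $2$.
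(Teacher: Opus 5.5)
Your proof is correct, but it takes a different route from the paper's.

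\textbf{The paper's route.} The paper works directly over $\bbq$. It takes the pro-$2$ group $G_S^T(2)$ of the maximal $2$-extension unramified outside the fixed set $S=\{3,5,\ldots,23\}\setminus\{p\}$ and split completely at $T=\{p,\infty\}$. It bounds $d(G_S^T(2))\ge\abs{S}-2$ by starting from $\bbq(\sqrt q: q\in S)$ and quotienting by the inertia group at $2$ and then by the Frobenius at $p$. It bounds $r-d\le 1$ via the Euler characteristic formula in Neukirch--Schmidt--Wingberg. Finally it controls $\Delta_K$ by the tame-ramification estimate $\Delta_K\le(\prod_{q\in S}q)^d$.

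\textbf{Your route.} You move the dependence on $p$ into the choice of a real quadratic base field $k=\bbq(\sqrt D)$ from a finite list, choosing $D$ to be a square mod $p$ so that $p$ splits in $k$. You then use the unramified (wide) $2$-class field tower of $k$ with splitting at $\mathfrak p,\mathfrak p'$ and at infinity.

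\textbf{What each buys.} Your version gives the discriminant bound for free: the root discriminant is exactly $\Delta_k^{1/2}$, so no tame-ramification estimate is needed. The generator count is just genus theory plus quotienting by the cyclic subgroup generated by $[\mathfrak p]$. The relation bound is the classical one for class field towers with splitting conditions. The paper's version uses one base field and one ramification set for every $p$. It also gets a much smaller constant than yours as written ($C=3\cdot5\cdots23$ versus $D_0^{1/2}$). That matters slightly for the explicit exponents discussed later in the paper, but not for the lemma as stated.

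\textbf{The step you flagged.} Your margin is large enough that the precise form of the Golod--Shafarevich criterion does not matter. With $\abs{I}\ge 16$ you have $\mathrm{rk}_2\,\mathrm{Cl}_S(k)\ge 14$. Suppose you even keep the extra $\mu_2$ contribution and use the cruder bound $r\le d+\mathrm{rk}\,\mathcal{O}_{k,S}^\times+1=d+4$. The Gasch\"utz--Vinberg condition $r\le d^2/4$ then only requires $d\ge 7$. So the subtleties you worried about (wide versus narrow class group, the $\delta$ term, the case $p=2$) do not affect the conclusion.
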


\begin{proof} Let $T = \{p,\infty\}$ and $S=\{3,5,7,11,13,17,19,23\}\backslash \{p\}$. Let $G_S^T(2)$ be the Galois group of the maximal pro-$2$ extension of $\mathbb Q$ unramified outside $S$ and split completely at all primes in $T$. If $G_S^T(2)$ is infinite, then for every $d$ a power of $2$  there is a number fields $K$ with degree $d$ which is totally real (since $\infty$ splits), in which $p$ splits completely, and is ramified only at primes in $S$, with Galois group of order a power of $2$. This is because an infinite pro-$2$-group has open subgroups of index every power of $2$ (because a finite $2$-group has subgroups of index every power of $2$ up to its order).

An extension of fields is called tamely ramified at a prime $p$ if the order of the inertia subgroup at $p$ of the Galois group is coprime to $p$. Since these fields $K$ have Galois group of order a power of $2$, the inertia subgroup has order a power of $2$, so because they are ramified only at odd primes, they are tamely ramified at each ramified prime. It follows by \cite[III, Theorem 2.6]{Ne99} that $\Delta_K \leq (\prod_{q\in S} q )^d \leq C^d$ where $C= 3\cdot 5 \cdot 7 \cdot 11 \cdot 13 \cdot 17 \cdot 19 \cdot 23$. So it only remains to check that $G_S^T(2)$ is infinite.

Let $d(G_S^T(2))$ be the minimum number of generators and $r(G_S^T(2))$ the minimum number of relations in a presentation of $G_S^T(2)$. We can check \[ d(G_S^T(2)) \geq |S|-2\] as follows.  The quadratic field $\mathbb Q( \{ \sqrt{q} \mid q \in S\})$ is split at infinity, ramified only at primes in $S$ and possibly at $2$, and has Galois group $(\mathbb Z/2)^{|S|}$. For $m$ an odd integer, the extension $\mathbb Q_2(\sqrt{m})$ depends only on $m$ mod $8$. It follows that \[\mathbb Q_2( \{ \sqrt{q} \mid q \in S\})= \mathbb Q_2( 1,\sqrt{-1}, \sqrt{5},\sqrt{-5})=\mathbb Q_2( \sqrt{-1},\sqrt{5})\] where we have chosen one representative from each congruence class mod $8$. Since $\mathbb Q_2(\sqrt{5})$ is unramified, $\mathbb Q_2( \sqrt{-1},\sqrt{5})$ has inertia group of order $2$. Thus the inertia group at $2$ is a subgroup of order $2$ of the Galois group. Taking the quotient by this we get a field with Galois group $(\mathbb Z/2)^{|S|-1}$ that is ramified only at primes in $S$. The Frobenius element at $p$ is an element of this Galois group, and thus has order $1$ or $2$. Taking the quotient by this element, we get a field with Galois group $(\mathbb Z/2)^{|S|-2}$ or $(\mathbb Z/2)^{|S|-1}$ which in addition splits completely at $p$. Thus $(\mathbb Z/2)^{|S|-2}$ is a quotient of $G_S^T(2)$ and hence 
\begin{equation}\label{Zp-case-1} d(G_S^T(2)) \geq |S|-2\geq 5.\end{equation}

We have
\begin{equation}\label{Zp-case-2} r(G_S^T(2)) \leq  d(G^T_S(2))+1 \end{equation}
by \cite[Theorem 10.7.12]{Neukirch2008}, since, in the notation of \cite{Neukirch2008}, $\chi(G_S^T(2))=1+r(G_S^T(2))-d(G_S^T(2))$ and we have $\theta=0$, $S$ does not intersect $S_p =\{p\}$, $r=1$, and $T \setminus S_\infty= \{p\}$ has cardinality $1$, so that $\chi(G_S^T(2))\leq 0+0+1+1=2$ and hence $r(G_S^T(2))-d(G_S^T(2))\leq 1$.

It follows from \eqref{Zp-case-1} and \eqref{Zp-case-2} that
\[ r(G_S^T(2)) <  \frac{d(G^T_S(2))^2}{4} \]
and hence by the Golod-Shafarevich theorem~\cite{GolodShafarevich} in its refined form due to Gasch\"utz and Vinberg~\cite{Vinberg1965,Koch1969}, $G_S^T(2)$ is infinite, as desired.\end{proof}

\subsection{Finite fields}

\begin{theorem}\label{finite-field-main} There exist constants $c>0$ and $f<1$ such that for each $\delta \in (0,1)$, for each prime $p$ sufficiently large depending on $\delta$, there exists $A \subset \mathbb F_p$ with $p^{f\delta}< \abs{A} < p^{\delta}$ and $\max(\abs{A+A},\abs{AA})\leq \abs{A}^{2-c}$. \end{theorem}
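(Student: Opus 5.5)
The plan is to take the construction of Lemma~\ref{lem-main} and reduce it modulo a prime of degree one. Three things need to hold: (i) $p$ splits in $K$; (ii) reduction modulo a prime $\mathfrak p\mid p$ is injective on the relevant sumset and product set; (iii) the parameters are chosen so that $\abs{A}$ lands in $(p^{f\delta},p^\delta)$.

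Fix $Y$ a large absolute constant and $X$ a large absolute constant, exactly as in the proof of Theorem~\ref{th-main}. The size of $A$ is then $\approx \lambda^d$ for some absolute $\lambda>1$. Choose $d$ so that $\lambda^d\approx p^{\delta'}$ for a suitable $\delta'<\delta$; this means $d\asymp \delta\log p$.

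\textbf{Choosing the field.} We need a totally real field $K$ of degree $d$, with $\Delta_K\le C^d$, in which $p$ splits completely. Lemma~\ref{field-existence-split-p} supplies such $K$ for every $d$ a power of $2$. Restricting $d$ to powers of $2$ costs only a factor $2$ in the exponent, which is absorbed into $f$. Since $p$ splits completely, $\mathcal O_K/\mathfrak p\cong \bbf_p$ for any $\mathfrak p\mid p$. Let $\pi\colon\mathcal O_K\to\bbf_p$ be the reduction map and set $A'=\pi(A)$, where $A=GP$ is the set from Lemma~\ref{lem-main}.

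\textbf{Injectivity.} This is the key point. Every element $\alpha$ of $A\cup(A+A)\cup AA$, and every difference of two such elements, satisfies $\abs{\sigma_i(\alpha)}\le M$ for all $i$, where $M=O(X^2e^{2Y})$ is an absolute constant. If $\alpha\ne0$ and $\alpha\in\mathfrak p$, then $p=N(\mathfrak p)$ divides $N(\alpha)$. Hence
\[p\le \abs{N(\alpha)}\le M^d.\]
To make this impossible we need $M^d<p$, that is, $d<\log p/\log M$. This is compatible with $d\asymp\delta\log p$ provided $\delta\log M/\log\lambda$ stays below $1$.

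Here the upper bound $\abs{A}<p^{\delta}$ with $\delta$ arbitrary, together with $\abs{A}>p^{f\delta}$ with $f$ fixed, requires care. We take $d=\lfloor \delta\log p/\log(\text{const})\rfloor$, rounded down to a power of $2$. If $\delta$ is small then $M^d<p$ holds automatically. If $\delta$ is close to $1$, the bound $M^d\approx p^{\delta\log M/\log\lambda}$ might exceed $p$.

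To fix this I would first try to make $\lambda$ comparable to $M$. Since $\abs{A}\approx(cXY)^d$ and $M\approx Xe^Y$, taking $X$ large with $Y$ fixed gives $\log M/\log\lambda\to1$ (roughly $2$ if we count products, since these involve $X^2$). So $\abs{A}\le p^{\delta}$ might force $M^d\le p^{2\delta}$, which is too large. The fix is to choose $d$ with $\lambda^d\approx p^{\delta/2}$ and accept $f\approx 1/2$ (with $f<1$ as allowed), or to note that the statement only needs some fixed $f<1$. Concretely, we pick $d$ so that $M^{2d}<p$ and $\abs{A}\ge p^{f\delta}$; when $\delta$ is near $1$ we instead use a smaller $\delta'$, which is still within the target range for a suitably small $f$.

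With injectivity in hand, $\abs{A'+A'}=\abs{A+A}$, $\abs{A'A'}=\abs{AA}$ and $\abs{A'}=\abs{A}$. The bound $\max\le\abs{A}^{2-c}$ then transfers directly from Lemma~\ref{lem-main}, with $c$ as in Theorem~\ref{th-headline}.

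The main obstacle is the parameter bookkeeping in this last step. We must reconcile the norm bound $M^{O(d)}<p$, which governs injectivity, with hitting the whole window $(p^{f\delta},p^\delta)$ despite only having $d$ available along powers of $2$. This is why the statement allows a constant $f<1$ and requires $p$ sufficiently large depending on $\delta$.
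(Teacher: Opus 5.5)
Your proposal is correct and follows essentially the same route as the paper: reduce the set from Lemma~\ref{lem-main} modulo a degree-one prime supplied by Lemma~\ref{field-existence-split-p}, get injectivity from the norm bound, and take $d$ to be a power of $2$ as large as possible subject to both the injectivity constraint and $\abs{A}<p^\delta$, which yields a fixed small $f$. One simplification the paper uses is that injectivity is only needed on $A$ itself, so $(4Xe^Y)^d<p$ suffices, because $\abs{\pi(A)+\pi(A)}=\abs{\pi(A+A)}\le\abs{A+A}$ and likewise for products; this removes your extra factor $X^2e^{2Y}$ coming from $AA$.
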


Sum-product results in finite fields typically require both an upper bound and a lower bound on the size of $A$, so we have stated this result with an upper bound and a lower bound.

It may be possible to prove a result with tighter control on $\abs{A}$ in terms of $p$, by choosing $p$ after constructing a field $K$ and a subset of $\mathcal O_K$, at the cost that this result would hold for infinitely many primes instead of all primes.

The value of $c$ obtained from our argument is very small. It is slightly worse than the explicit value of $c$ we obtain for the main theorem, owing to the dependence of this argument on Lemma \ref{field-existence-split-p}. It may be possible to prove a similar result in finite fields of large size and small characteristic, with a better exponent, using the results of the next subsection and reducing modulo a prime of the function field $\mathbb F_q(C)$. To make this interesting, one would have to check that the sets produced this way are far from any subfield of the finite field, for example as in the finite field sum-product estimate of Li and Roche-Newton \cite{Li2011}.

\begin{proof} We apply Lemma~\ref{field-existence-split-p} for a $d$ to be chosen later to produce a number field $K$ of degree $d$ in which $p$ splits completely. We apply Lemma~\ref{lem-main} to produce $A \subset \mathcal O_K$. Since the prime $p$ splits completely in $K$, we may choose a prime $\mathfrak p$ of $\mathcal O_K$ lying over $p$, with residue field $\mathbb F_p$, to obtain a surjection $\mathcal O_K \to \mathbb F_p$. We will consider the image of $A$ inside $\mathbb F_p$.

In the proof of Lemma~\ref{lem-main}, it is observed that every $\alpha \in A$ has $\abs{\sigma(\alpha)}\leq 2X e^Y$ for all $\sigma$, and thus for $\alpha_1 \neq \alpha_2$ in $A$ we have $\abs{\sigma(\alpha_1-\alpha_2)}\leq 4 X e^Y$ and hence the norm of $\alpha_1-\alpha_2$, which is the product of its image under all embeddings $\sigma$, is at most $(4X e^Y)^d$. If $\alpha_1 $ and $\alpha_2$ have the same image in $\mathbb F_p$, then $\alpha_1-\alpha_2$ must be divisible by $\mathfrak p$ and hence have a norm a multiple of $p$.

It follows that for the map $\mathcal O_K \to\mathbb F_p$ to be injective on $A$, it suffices to have $(4 Xe^Y)^d <p$.

From Lemma \ref{lem-main}, $\abs{A} \leq (XY/c')^d$ for an absolute constant $c'$. Thus to have $\abs{A}<p^\delta$, it suffices to have $(XY/c')^{d}<p^\delta$. Let $d$ be the least power of $2$ such that $(4 Xe^Y)^d <p$ and $(XY/c')^{d}<p^\delta$. Arguing as in the proof of Theorems \ref{th-main} and \ref{th-headline}, we have $\max(\abs{A+A},\abs{AA})\leq \abs{A}^{2-c}$. Since $p$ is sufficiently large, $d$ is sufficiently large to be used in this argument.

It remains to prove $\abs{A}> p^{\frac{\delta}{f}}$. To do this, we use Lemma \ref{lem-main} which gives
\begin{equation}\label{A-lb-fin-case} \abs{A} \geq \frac{ (c' XY)^d}{ Y \Delta_K^{3/2}} \geq  \frac{ (c' XY)^d}{ Y C^{3d/2}}\end{equation} which, with parameters chosen as in the proof of Theorems \ref{th-main} and \ref{th-headline}, is exponentially large in $d$. Since $d$ is the least power of $2$ such that $(4 Xe^Y)^d <p$ and $(XY/c')^{\frac{d}{\delta}}<p$, we have either $(4 Xe^Y)^{2d} \geq p$ or $(XY/c')^{\frac{2d}{\delta}}\geq p$. Combining either one of these with \eqref{A-lb-fin-case} gives a lower bound of a power of $p^\delta$, as desired.\end{proof}

\subsection{Function fields}

We now construct counterexamples to the sum-product conjecture in fields of characteristic $p$. The constructions will lie in a sequence of fields $\mathbb F_q(C_i)$ for a sequence of algebraic curves $C_i$, and hence give counterexamples to the sum-product conjecture in any field containing all of them as subfields, such as $\overline{\mathbb F_q(t)}$ or $\mathbb F_q((t))$ (since our curves $C_i$ will have rational points so that $\mathbb F_q(C_i) \subseteq \mathbb F_q((t))$).

The rational places of $C_i$ will play the role that the infinite places play in the main argument of this paper, or that the small split primes play in the original unit distance argument. Hence we rely on constructions of curves with many rational points.

Let $C$ be a smooth projective geometrically connected curve over a finite field $\mathbb F_q$, and let $\mathbb F_q(C)$ be a field of rational functions on $C$. A convenient way to produce a subset $A \subset \mathbb F_q(C)$ is to construct a subset $A \subseteq H^0(C,L)$ of the global sections $H^0(C,L)$ of a line bundle $L$ on $C$. Dividing by any nonzero section of $L$ identifies $A \subset H^0(C,L)$ with a subset of $\mathbb F_q(C)$. This operation is compatible with taking sums and products, so to find a counterexample to sum-product it suffices to find a subset $A \subseteq H^0(C,L)$ such that $A + A \subseteq H^0(C,L)$ and $AA \subseteq H^0(C,L^{ 2})$ are both small. 

Our construction is as follows. Let $L_P$ be a line bundle of degree $d_P$ and $L_G$ be a line bundle of degree $d_G$. Let 
\[ P = \{ f \in H^0(C, L_P) \mid f \textrm{ does not vanish at any point in } C(\mathbb F_q)\}\]
and
\[G = \{ g \in H^0(C, L_G) \mid g \textrm{ vanishes only at points in } C(\mathbb F_q)\}.\]
Since the $0$ section vanishes everywhere, $0$ is contained in neither $P$ nor $G$.

Let $A = P G \subseteq H^0(C, L_P \otimes L_G)$. 

To understand the analogy between this construction and our original construction with number fields, one should think of $C(\mathbb F_q)$ as analogous to the set of infinite places and $H^0(C,L)$ as analogous to the set of elements of the ring of integers with bounded absolute value at each infinite place, with the exact bound depending on the line bundle $L$. Then $P$ is analogous to the set of elements of the ring with bounded absolute value at each infinite place, that are also not too small at each infinite place (since the nonvanishing at $x\in C(\mathbb F_q)$ forces the $x$-adic absolute value to not be too large), which is exactly how the set $P$ in the number field case can be described. The elements of $G$ are analogous to the set of elements of the ring of integers with bounded absolute value at each infinite place that are not divisible by any finite prime (since vanishing at a point is equivalent to being divisible by the corresponding prime ideal), in other words, units of bounded absolute value, which is similar to the construction of $G$ in the number field case. (We have dropped the lower bound on the absolute value that was used in the number field case.)

\begin{lemma}\label{ff-pg} We have $ |PG| = \frac{|P| |G|}{q-1}.$\end{lemma}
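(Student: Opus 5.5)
We want $|PG| = |P||G|/(q-1)$. The plan is to show that the multiplication map $P\times G\to H^0(C,L_P\otimes L_G)$, $(f,g)\mapsto fg$, has every fibre equal to an orbit of the scaling action $\lambda\cdot(f,g)=(\lambda f,\lambda^{-1}g)$ of $\mathbb F_q^\times$. This action is free, since $f\neq 0$ and $g \ne 0$ (neither $P$ nor $G$ contains the zero section). It also preserves $P$ and $G$, because scaling by a nonzero constant does not change the vanishing locus. Each orbit therefore has exactly $q-1$ elements, and the map sends each orbit to a single product, so the formula follows once we know that distinct orbits give distinct products.

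So suppose $f_1g_1=f_2g_2$ with $f_i\in P$ and $g_i\in G$, as sections of $L_P\otimes L_G$. We compare divisors of zeros, $\mathrm{div}(f_1)+\mathrm{div}(g_1)=\mathrm{div}(f_2)+\mathrm{div}(g_2)$, all of which are effective divisors on $C$.

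The key step is to separate these divisors according to their support. Each $\mathrm{div}(f_i)$ is supported away from $C(\mathbb F_q)$, and each $\mathrm{div}(g_i)$ is supported inside $C(\mathbb F_q)$. Restricting the equality to the points of $C(\mathbb F_q)$ gives $\mathrm{div}(g_1)=\mathrm{div}(g_2)$, and restricting to the complement gives $\mathrm{div}(f_1)=\mathrm{div}(f_2)$.

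Two nonzero sections of the same line bundle with the same divisor differ by a nowhere-vanishing regular function on the projective curve $C$. Since $C$ is geometrically connected, the only such functions are the constants in $\mathbb F_q^\times$. Hence $g_2=\lambda g_1$ for some $\lambda\in\mathbb F_q^\times$. Then $f_1g_1=f_2\lambda g_1$, and since $g_1\neq 0$ and $H^0$ embeds into the function field, we get $f_2=\lambda^{-1}f_1$. This shows that $(f_2,g_2)$ lies in the orbit of $(f_1,g_1)$.

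The only place needing care is the divisor bookkeeping. Points of $C$ here should be closed points, and $C(\mathbb F_q)$ means the degree-one closed points. The condition "$f$ does not vanish at any point of $C(\mathbb F_q)$" then says exactly that $\mathrm{div}(f)$ avoids those points, and "$g$ vanishes only at points of $C(\mathbb F_q)$" says exactly that $\mathrm{div}(g)$ is supported on them. With these readings the support separation above is immediate, and I expect no real obstacle.
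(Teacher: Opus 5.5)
Your proof is correct and is essentially the paper's argument. The paper notes that $f_1/f_2=g_2/g_1$ has no zeros or poles on $C(\mathbb F_q)$ (from the left side) and none off $C(\mathbb F_q)$ (from the right side), so it is a constant in $\mathbb F_q^\times$; this is your support separation of divisors phrased for the ratio, and you additionally spell out the freeness and $P$-, $G$-invariance of the scaling action that the paper leaves implicit in writing $PG=(P\times G)/\mathbb F_q^\times$.
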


\begin{proof}  If $f_1,f_2 \in P$ and $g_1,g_2\in G$ satisfy $f_1g_1=f_2g_2$ then we have $f_1/f_2=g_2/g_1$. Since $f_1/f_2$ is a rational function with no zeroes or poles at points of $C(\mathbb F_q)$, and $g_2/g_1$ is a rational function with only zeroes and poles at points of $C(\mathbb F_q)$, they must both have no zeroes or poles and hence be elements of $\mathbb F_q^\times$. Thus $PG = (P \times G)/\mathbb F_q^\times$.\end{proof}

Let $g$ be the genus of $C$.

\begin{lemma}\label{ff-rr} As long as $d_P \geq 2g-1 +\abs{C(\mathbb F_q)}$ we have 
\begin{equation}\label{ff-P-bound} |P| = q^{ d_P + 1-g} (1-q^{-1})^{|C(\mathbb F_q)|} \end{equation}
and
\begin{equation}\label{ff-sum-bound}|PG + PG| \leq q^{ d_P+d_G  + 1-g} .\end{equation}
\end{lemma}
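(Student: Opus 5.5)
The count of $P$ comes from Riemann--Roch plus an independent evaluation map, and the sumset bound is a containment in one space of sections. Write $N=\abs{C(\mathbb F_q)}$ and let $D=\sum_{x\in C(\mathbb F_q)} x$ be the reduced divisor of rational points, which has degree $N$.

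\textbf{Step 1: the evaluation map is surjective.} Consider
\[H^0(C,L_P)\to \bigoplus_{x\in C(\mathbb F_q)} L_P|_x\cong \mathbb F_q^{N},\]
sending a section to its values (fibres) at the rational points. Its kernel is $H^0(C,L_P(-D))$. The bundle $L_P(-D)$ has degree $d_P-N\geq 2g-1$, so by Riemann--Roch
\[h^0(L_P(-D))=d_P-N+1-g.\]
Similarly $\deg L_P\geq 2g-1$ gives $h^0(L_P)=d_P+1-g$. Comparing dimensions, the image has dimension $N$, so the map is surjective. Equivalently, this follows from $H^1(L_P(-D))=0$ in the long exact sequence of $0\to L_P(-D)\to L_P\to L_P|_D\to 0$. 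This is the only step needing care, and it is exactly where the hypothesis $d_P\geq 2g-1+N$ is used.

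\textbf{Step 2: counting $P$.} A section lies in $P$ precisely when its image in $\mathbb F_q^N$ has every coordinate nonzero. Since the map is a surjective linear map, each fibre is a coset of the kernel and has size $q^{d_P-N+1-g}$. There are $(q-1)^N$ points of $\mathbb F_q^N$ with all coordinates nonzero. Hence
\[\abs{P}=(q-1)^N q^{d_P-N+1-g}=q^{d_P+1-g}(1-q^{-1})^{N},\]
which is \eqref{ff-P-bound}.

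\textbf{Step 3: the sumset bound.} Every element of $PG$ is a product $fg$ with $f\in H^0(L_P)$ and $g\in H^0(L_G)$, so it lies in $H^0(C,L_P\otimes L_G)$. This is a vector space, so it is closed under addition, and therefore
\[PG+PG\subseteq H^0(C,L_P\otimes L_G).\]
The bundle $L_P\otimes L_G$ has degree $d_P+d_G$. Since $d_P\geq 2g-1$ and $d_G\geq 0$ (we may assume $G$ is nonempty, as otherwise $PG$ is empty and the bound is trivial), this degree is at least $2g-1$. Riemann--Roch then gives
\[h^0(L_P\otimes L_G)=d_P+d_G+1-g,\]
so $\abs{PG+PG}\leq q^{d_P+d_G+1-g}$, which is \eqref{ff-sum-bound}. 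Alternatively, the general bound $h^0\leq \deg+1-g+h^1$ with $h^1=0$ in this degree range gives the same inequality.
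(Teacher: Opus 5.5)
Your proof is correct. Step 3 is the paper's argument verbatim: containment in $H^0(C,L_P\otimes L_G)$, nonemptiness of $G$ forcing $d_G\ge 0$ (since $0\notin G$), and Riemann--Roch in degree $\ge 2g-1$.

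For $\abs{P}$ you take a different route. The paper does inclusion--exclusion over all subsets $S\subseteq C(\mathbb F_q)$. It applies Riemann--Roch separately to each $L_P(-\sum_{x\in S}[x])$ to get $q^{d_P-\abs{S}+1-g}$, then sums the binomial expansion to $q^{d_P+1-g}(1-q^{-1})^{\abs{C(\mathbb F_q)}}$. You instead apply Riemann--Roch only to $L_P$ and $L_P(-D)$, deduce that evaluation at the rational points surjects onto $\mathbb F_q^N$, and count fibres over the $(q-1)^N$ vectors with no zero coordinate.

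The two arguments are equivalent. Surjectivity is exactly what gives each inclusion--exclusion term its ``expected'' size $q^{d_P+1-g-\abs{S}}$. Conversely, the inclusion--exclusion sum is just the count of nowhere-zero vectors in $\mathbb F_q^N$. The hypothesis $d_P\ge 2g-1+N$ enters at the same point in both, namely for the lowest-degree bundle $L_P(-D)$.

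Your version is more structural: it shows the values at the rational points are jointly uniformly distributed, so it would immediately count sections with any prescribed vanishing pattern at those points. The paper's version is a mechanical computation that never needs to introduce the evaluation map.
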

\begin{proof} These follow by the Riemann-Roch formula.

For \eqref{ff-sum-bound}, note that $PG+ PG$ is a subset of $H^0(C, L_P \otimes L_G)$. If $G$ is non-empty we have $d_G\geq 0$ so the assumption implies $d_P +d_G \geq   2g-1 +\abs{ C(\mathbb F_q)}\geq 2g-1$ and thus 
\[|PG + PG|  \leq |H^0(C,L_P \otimes L_G)| =q^{ \deg( L_P \otimes L_G )+ 1-g} = q^{d_P +d_G +1-g} \] by Riemann-Roch.

For \eqref{ff-P-bound}, we use inclusion-exclusion to obtain
\[ |P|= \sum_{S \subseteq C(\mathbb F_q)} (-1)^{|S|} |\{ f\in H^0(C, L_P) | f \textrm{ vanishes at all points in } S\}|\]\[=  \sum_{S \subseteq C(\mathbb F_q)}  (-1)^{|S|} |H^0(C, L_P (- \sum_{x\in S}[x]))| =  \sum_{S \subseteq C(\mathbb F_q)}  (-1)^{|S|}  q^{ \deg L_P(- \sum_{x\in S}[x])+1-g} \]\[= \sum_{S \subseteq C(\mathbb F_q)}  (-1)^{|S|} q^{ d_P - |S| + 1-g}= q^{ d_P + 1-g} (1-q^{-1})^{|C(\mathbb F_q)|} \]
since $\deg L_P(- \sum_{x\in S}[x]) = d_P - |S| \geq d_P - |C(\mathbb F_q)|  \geq 2g-1$ by assumption. \end{proof}

Let $\operatorname{Pic}^0(C)(\mathbb F_q)$ be the degree-zero Picard group of $C$ (which goes by other names, including the $\mathbb F_q$-points of the Jacobian of $C$ and the class group of $\mathbb F_q(C)$) and let $\operatorname{Pic}^0(C)(\mathbb F_q)[2]$ be its $2$-torsion subgroup. For $L$ a line bundle, let 
\[ N_{\mathbb F_q}(L) = |\{ g \in H^0(C, L) \mid g \textrm{ vanishes only at points in } C(\mathbb F_q)\}|.\]

\begin{lemma}\label{ff-pm} For each $d_G \geq 0$ there exists a line bundle $L_G$ of degree $d_G$ such that
\begin{equation}\label{ff-G-bound} N_{\mathbb F_q}(L_G) \geq \frac{ (q-1)   \binom{d_G + |C(\mathbb F_q)|-1}{d_G}}{ 2|\operatorname{Pic}^0(C)(\mathbb F_q)|}\end{equation}
and
\begin{equation}\label{ff-almost-product-bound} \frac{ N_{\mathbb F_q}(L_G^2)}{N_{\mathbb F_q}(L_G)} \leq \frac{ 2 |\operatorname{Pic}^0(C)(\mathbb F_q)[2]| \binom{2d_G + |C(\mathbb F_q)|-1}{2d_G}}{ \binom{d_G + |C(\mathbb F_q)|-1}{d_G}}.\end{equation}\end{lemma}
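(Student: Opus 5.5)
Set $n=|C(\mathbb F_q)|$ and let $x_1,\dots,x_n$ be the rational points. The idea is to count effective divisors supported on $C(\mathbb F_q)$ and sort them by linear equivalence class. Every $g\in H^0(C,L)$ that vanishes only at rational points has divisor of zeros
\[\mathrm{div}(g)=\sum_i m_i[x_i], \qquad \sum_i m_i=\deg L,\]
and this divisor is linearly equivalent to $L$. Conversely, given an effective divisor $D=\sum m_i[x_i]$ of degree $\deg L$ with $\mathcal O(D)\cong L$, the sections $g$ with $\mathrm{div}(g)=D$ are exactly the $\mathbb F_q^\times$-multiples of one fixed section. Hence
\[N_{\mathbb F_q}(L)=(q-1)\cdot\#\{D\ge 0 \text{ supported on } C(\mathbb F_q):\ \deg D=\deg L,\ \mathcal O(D)\cong L\}.\]

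\textbf{Pigeonhole.} The number of such divisors of degree $d_G$, with no equivalence condition, is the number of $n$-tuples of nonnegative integers summing to $d_G$, namely $M_1:=\binom{d_G+n-1}{d_G}$. Let $f(L)$ be the number lying in the class of $L$. Then $\sum_{L\in\operatorname{Pic}^{d_G}} f(L)=M_1$. When $n\ge1$ (which we may assume), $\operatorname{Pic}^{d_G}(C)(\mathbb F_q)$ is a torsor under $\operatorname{Pic}^0(C)(\mathbb F_q)$, so it has $h:=|\operatorname{Pic}^0(C)(\mathbb F_q)|$ elements. In the same way, $\sum_{L'\in \operatorname{Pic}^{2d_G}}f(L')=M_2:=\binom{2d_G+n-1}{2d_G}$.

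We need a single $L$ for which $f(L)$ is large and $f(L^2)/f(L)$ is small. The plan is to combine the two requirements into one averaged inequality. Consider the sum
\[S=\sum_{L\in\operatorname{Pic}^{d_G}} f(L^{2}).\]
The squaring map $\operatorname{Pic}^{d_G}\to\operatorname{Pic}^{2d_G}$ has fibres that are empty or cosets of $\operatorname{Pic}^0[2]$, so each class of degree $2d_G$ is hit at most $t:=|\operatorname{Pic}^0(C)(\mathbb F_q)[2]|$ times. Therefore $S\le tM_2$.

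Now let $B$ be the set of $L$ with $f(L)\ge M_1/(2h)$. The $L\notin B$ contribute less than $h\cdot M_1/(2h)=M_1/2$ to $\sum_L f(L)$, so
\[\sum_{L\in B} f(L)\ge M_1/2.\]
Suppose that every $L\in B$ had $f(L^2)>2tM_2f(L)/M_1$. Summing over $B$ would give
\[S>\frac{2tM_2}{M_1}\sum_{L\in B}f(L)\ge tM_2,\]
which is a contradiction. So some $L_G\in B$ satisfies $f(L_G^2)\le 2tM_2 f(L_G)/M_1$.

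Multiplying through by $q-1$ turns these two properties of $L_G$ into \eqref{ff-G-bound} and \eqref{ff-almost-product-bound}; the factor $q-1$ cancels in the ratio. Note that $f(L_G)\ge M_1/(2h)>0$, so the ratio is well defined.

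\textbf{Main obstacle.} The delicate step is the fibre count for squaring. We need the squaring map on $\operatorname{Pic}^{d_G}(C)(\mathbb F_q)$ to have fibres of size at most $t$. This holds because two line bundles with the same square differ by an $\mathbb F_q$-rational $2$-torsion class. We also rely on the identification of $\operatorname{Pic}$ with line bundles defined over $\mathbb F_q$, which is valid since a curve over a finite field always has a divisor of degree one. The remaining steps are the routine bookkeeping above.
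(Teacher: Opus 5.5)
Your proof is correct and follows the paper's argument. You count effective divisors supported on $C(\mathbb F_q)$ by class, average over $\operatorname{Pic}^{d_G}(C)(\mathbb F_q)$, and bound the fibres of squaring by $|\operatorname{Pic}^0(C)(\mathbb F_q)[2]|$, exactly as the paper does; the only cosmetic difference is that you pick $L_G$ via the threshold set $B$ and a contradiction, whereas the paper picks $L_G$ where a single linear combination of $N_{\mathbb F_q}(L_G)$ and $N_{\mathbb F_q}(L_G^2)$ is at least its expectation $\tfrac12$.
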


\begin{proof} If we choose $L_G$ uniformly at random among isomorphism classes of line bundles $L_G$ of degree $d_G$, letting $\mathbb E$ be the expectation, we have \[ \mathbb E[ N_{\mathbb F_q}(L_G)] =\frac{ (q-1)   \binom{d_G + |C(\mathbb F_q)|-1}{d_G}}{ |\operatorname{Pic}^0(C)(\mathbb F_q)|}\] since there are $ \binom{d_G + |C(\mathbb F_q)|-1}{d_G}$ divisors of degree $d_G$ supported at the points of $C(\mathbb F_q)$, each divisor defines $q-1$ sections of $L_G$ if its divisor class equals the divisor class of $L_G$ and $0$ sections otherwise, and the number of divisor classes of degree $d_G$ is $|\operatorname{Pic}^0(C)(\mathbb F_q)|$.

If we choose $L_G$ uniformly at random, then the divisor class of $L_G^{2}$ is chosen uniformly at random from the divisor classes of degree $2d_G$ that are divisible by $2$, the number of which is $\frac{| \operatorname{Pic}^0(C)(\mathbb F_q)|}{|\operatorname{Pic}^0(C)(\mathbb F_q)[2]|}$. Thus \[ \hspace{-.5in} \mathbb E[ N_{\mathbb F_q}(L_G^2) ] \leq \frac{ (q-1)   \binom{2d_G + |C(\mathbb F_q)|-1}{2d_G} |\operatorname{Pic}^0(C)(\mathbb F_q)[2]| }{ |\operatorname{Pic}^0(C)(\mathbb F_q)|}\] by the same reasoning. Hence we can choose $L_G$ of degree $d_G$ with 
\begin{equation}\label{ff-expectation-inequality} \frac{ |\operatorname{Pic}^0(C)(\mathbb F_q)|}{ (q-1)   \binom{d_G + |C(\mathbb F_q)|-1}{d_G}}N_{\mathbb F_q}(L_G) - \frac{ |\operatorname{Pic}^0(C)(\mathbb F_q)|}{2 (q-1)   \binom{2d_G + |C(\mathbb F_q)|-1}{2d_G} |\operatorname{Pic}^0(C)(\mathbb F_q)[2]| }N_{\mathbb F_q}(L_G^2) \geq \frac{1}{2} \end{equation}
as the expectation of the left hand side of \eqref{ff-expectation-inequality} is at least $\frac{1}{2}$ when $L_G$ is chosen uniformly at random and thus we can choose an $L_G$ where the left hand side is at least $\frac{1}{2}$.

\eqref{ff-expectation-inequality} immediately implies \eqref{ff-G-bound} by dropping the $N_{\mathbb F_q}(L_G^2)$ term and implies \eqref{ff-almost-product-bound} by dropping the $\frac{1}{2}$ term.\end{proof}

\begin{lemma} \label{ff-product} Choosing $L_G$ as in Lemma \ref{ff-pm}, we have
\begin{equation}\label{ff-G-product-bound} \frac{ |GG|}{|G|} \leq \frac{ 2 |\operatorname{Pic}^0(C)(\mathbb F_q)[2]| \binom{2d_G + |C(\mathbb F_q)|-1}{2d_G}}{ \binom{d_G + |C(\mathbb F_q)|-1}{d_G}}\end{equation}
and as long as $d_P \geq 2g-1 +\# C(\mathbb F_q)$ we have
\begin{equation}\label{ff-product-bound} \frac{|PGPG|}{|PG|}\leq \frac{ 2 |\operatorname{Pic}^0(C)(\mathbb F_q)[2]| \binom{2d_G + |C(\mathbb F_q)|-1}{2d_G} q^{ d_P + 1-g} (1-q^{-1})^{|C(\mathbb F_q)|}}{ \binom{d_G + |C(\mathbb F_q)|-1}{d_G}}.\end{equation} 
\end{lemma}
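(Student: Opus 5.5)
The plan is to bound $GG$ by counting sections of $L_G^2$, and then bound $PGPG$ by combining this with a trivial bound on $PP$ and the direct-product structure from Lemma~\ref{ff-pg}.

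\emph{Bound on $GG$.} If $g_1,g_2\in G$, then $g_1g_2\in H^0(C,L_G^2)$, and its zeroes are supported on the zeroes of $g_1$ and $g_2$. So $g_1g_2$ vanishes only at points of $C(\mathbb F_q)$, and it is nonzero. Hence $GG$ is contained in the set counted by $N_{\mathbb F_q}(L_G^2)$, giving
\[|GG|\leq N_{\mathbb F_q}(L_G^2).\]
Since $|G|=N_{\mathbb F_q}(L_G)$ by definition, \eqref{ff-almost-product-bound} of Lemma~\ref{ff-pm} gives \eqref{ff-G-product-bound} directly.

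\emph{Bound on $PGPG$.} By commutativity, $PGPG=(PP)(GG)$, so
\[|PGPG|\leq |PP|\,|GG|\leq |P|^2|GG|.\]
We can do better by a factor of $q-1$: both $PP$ and $GG$ are stable under multiplication by $\mathbb F_q^\times$. So the product map from $PP\times GG$ to $PGPG$ factors through the quotient by the antidiagonal $\mathbb F_q^\times$-action, and
\[|PGPG|\leq \frac{|PP|\,|GG|}{q-1}\leq \frac{|P|^2|GG|}{q-1}.\]
Either version suffices if we track factors carefully. By Lemma~\ref{ff-pg}, $|PG|=|P||G|/(q-1)$, so
\[\frac{|PGPG|}{|PG|}\leq \frac{|P|^2|GG|/(q-1)}{|P||G|/(q-1)}=|P|\,\frac{|GG|}{|G|}.\]

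\emph{Conclusion.} Substitute \eqref{ff-P-bound} for $|P|$ (this uses the hypothesis $d_P\geq 2g-1+|C(\mathbb F_q)|$) and \eqref{ff-G-product-bound} for $|GG|/|G|$. This yields \eqref{ff-product-bound}.

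The only point requiring care is the factor $q-1$. If we used the crude bound $|PGPG|\leq|P|^2|GG|$, we would lose a factor of $q-1$ against $|PG|$. The $\mathbb F_q^\times$-invariance argument above removes that loss, so there is no genuine obstacle.
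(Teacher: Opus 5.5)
Your proposal is correct and follows the paper's proof essentially step for step: it bounds $GG$ via the sections of $L_G^2$ vanishing only on $C(\mathbb F_q)$ together with \eqref{ff-almost-product-bound}, then uses $|PGPG|\le |P|^2|GG|/(q-1)$ from $\mathbb F_q^\times$-stability, then Lemma~\ref{ff-pg}, then substitutes \eqref{ff-P-bound}. The one inaccuracy is your aside that ``either version suffices''. The crude bound $|PGPG|\le |P|^2|GG|$ would leave an extra factor $q-1$ in \eqref{ff-product-bound}, as you note yourself at the end, so the antidiagonal $\mathbb F_q^\times$-action is genuinely needed; it is free because all elements of $PP$ and $GG$ are nonzero sections.
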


\begin{proof} We have \[ \{g \in H^0(C, L_G) \mid g \textrm{ vanishes only at points in } C(\mathbb F_q)\}^2\]\[ \subseteq \{ h \in H^0(C, L_G^2) \mid h \textrm{ vanishes only at points in } C(\mathbb F_q)\} \] which together with \eqref{ff-almost-product-bound} implies \eqref{ff-G-product-bound}.

We have $ |PGPG| \leq |PP| |GG|/(q-1) \leq |P|^2 |GG|/(q-1)$ since both $PP$ and $GG$ are stable under multiplication by $\mathbb F_q^\times$ and we have $|PG|=|P||G|/(q-1)$ by Lemma \ref{ff-pg} so we have
\[ \frac{ |PGPG|}{|PG|} \leq \frac{ |P| |GG|}{|G|} \] so that \eqref{ff-product-bound} follows from \eqref{ff-G-product-bound} and \eqref{ff-P-bound}.
\end{proof}

Putting this together, we set $A = PG$ and then embed $A$ into $\mathbb F_q(C)$. Using  Lemma \ref{ff-pg}, \eqref{ff-P-bound}, and \eqref{ff-G-bound}, we obtain

 \begin{equation}\label{ff-a} |A| \geq \frac{ \binom{d_G + |C(\mathbb F_q)|-1}{d_G} q^{ d_P + 1-g} (1-q^{-1})^{|C(\mathbb F_q)|}}{ 2|\operatorname{Pic}^0(C)(\mathbb F_q)|}.\end{equation} Using \eqref{ff-sum-bound}, we obtain  \begin{equation}\label{ff-a-sum} | A + A |\leq q^{ d_P+d_G  + 1-g}.\end{equation} Using \eqref{ff-product-bound}, we obtain \begin{equation}\label{ff-a-product}\frac{|AA|}{|A|} \leq \frac{ 2 |\operatorname{Pic}^0(C)(\mathbb F_q)[2]| \binom{2d_G + |C(\mathbb F_q)|-1}{2d_G} q^{ d_P + 1-g} (1-q^{-1})^{|C(\mathbb F_q)|}}{ \binom{d_G + |C(\mathbb F_q)|-1}{d_G}}.\end{equation}

We first give a counterexample to the sum-product theorem in $\mathbb F_p((t))$, though with the exponent getting worse as the characteristic grows. This argument is not particularly optimized. Afterwards, we give an argument that gets a more optimized exponent in $\mathbb F_q((t)))$ for specific finite fields $\mathbb F_q$. This second result is specialized to the case of $q$ a perfect square, to take advantage of known constructions of curves with many rational points over finite fields of square order.

We begin with an asymptotic formula for binomial coefficients For $x,y$ positive reals, let $F_q(x,y) = (x+y)\log_q(x+y) - x \log_q(x) - y\log_q(y)$.

We have the asymptotic
\begin{equation}\label{binomial-asymptotic} \binom{n+m}{n} = q^{ F_q (x,y) g + o(g)} \textrm{ when} n =xg +o(g) \textrm{ and } m=yg + o(g) \end{equation} that follows from Stirling's formula.

\begin{theorem}\label{th-ffspec} There is an absolute constant $c>0$ such that for any prime $p$, there exist finite subsets $A \subset \mathbb F_{p}((t))$ of arbitrarily large cardinality such that $|A + A| \leq |A|^{2 -\frac{c}{\log p}} $ and $|AA| \leq |A|^{2-\frac{c}{\log p}}$.\end{theorem}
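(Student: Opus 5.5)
We plan to specialize the general bounds \eqref{ff-a}, \eqref{ff-a-sum} and \eqref{ff-a-product} to a family of curves $C$ over $\mathbb F_p$ of genus $g\to\infty$ with many rational points, and to scale the parameters linearly in $g$. For the curves we would invoke Serre's class field tower construction, which gives $A(q)\geq c_0\log q$ for every prime power $q$. This yields curves over $\mathbb F_p$, each with a rational point, satisfying $|C(\mathbb F_p)|=ag+o(g)$ with $a\geq c_0\log p$; if necessary we pass to a subfamily and a smaller $a$ so that $a\asymp \log p$. Two class group estimates are then needed. The first is $|\operatorname{Pic}^0(C)(\mathbb F_p)|\leq (1+\sqrt p)^{2g}\leq p^{g}4^{g}$, from the Weil bounds on Frobenius eigenvalues. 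The second is $|\operatorname{Pic}^0(C)(\mathbb F_p)[2]|\leq 2^{2g}$, since the $2$-torsion of a $g$-dimensional abelian variety has order at most $2^{2g}$.

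Next we fix the parameters $d_G=yg$ and $d_P=Dg$, where $D\geq 2+a$ so that Lemma~\ref{ff-rr} applies. The plan is to take logarithms base $p$, divide by $g$, and use \eqref{binomial-asymptotic}; all $(1-p^{-1})^{|C(\mathbb F_p)|}$ factors and constant$^g$ factors are absorbed into $O(a)$ and $O(\log_p 4)$ terms. To leading order this gives
\[\tfrac1g\log_p|A|\approx E:=D-2+F_p(y,a),\qquad \tfrac1g\log_p|A+A|\leq D+y,\]
\[\tfrac1g\log_p\tfrac{|AA|}{|A|}\lesssim D-1+F_p(2y,a)-F_p(y,a)+2\log_p 2 .\]
The product requirement $|AA|\leq|A|^{2-c}$ then reduces to
\[S:=2F_p(y,a)-F_p(2y,a)-1-O(\log_p 4)\geq cE .\]
The sum requirement reduces to $y+2-F_p(y,a)\leq (1-c)E$.

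The key computation is the size of the saving $S$, and I expect this to be the main obstacle. In the regime $y\ll a$ one checks that $2F_p(y,a)-F_p(2y,a)\approx 2y\log_p2$. This is swamped by the fixed loss of $1$, which comes from dividing by $|\operatorname{Pic}^0|\approx p^g$, so that regime fails. We therefore take $y\gg a$. There a direct expansion gives
\[2F_p(y,a)-F_p(2y,a)=a\log_p\frac{(y+a)^2}{a(2y+a)}+O(\cdot)\approx a\log_p\frac{y}{2a}.\]
We choose $y=a\,p^{K/a}$ with $K$ an absolute constant large enough that $S\geq 1$. Since $a\asymp\log p$, the quantity $p^{K/a}$ is bounded by an absolute constant, so $y=O(a)=O(\log p)$. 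This has to be checked carefully, including the error terms in \eqref{binomial-asymptotic} and in the $\operatorname{Pic}$ estimates.

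Finally, the sum condition forces $E\gtrsim y$, which we arrange by taking $D\asymp y+2$. That is, $d_P$ is chosen large enough that $P$ dominates $|A|$ and $A+A$ costs only a factor $p^{O(yg)}$. With this choice $E=O(\log p)$, and both conditions hold with $c\asymp S/E\gg 1/\log p$. Since $|A|\geq p^{g(E-o(1))}$ and $g\to\infty$ along the tower, we obtain arbitrarily large $A\subset\mathbb F_p(C)\subseteq\mathbb F_p((t))$, the last inclusion coming from expanding at a rational point of $C$. This would give $\max(|A+A|,|AA|)\leq|A|^{2-c/\log p}$. If Serre's bound turns out too weak for small $p$, I would instead use an explicit tower over each small prime, for example from the tables of known $A(p)$ lower bounds, since only finitely many $p$ are affected.
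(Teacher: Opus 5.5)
Your proposal is correct and follows the paper's argument: the same set $A=PG$ on Serre's curves with $|C(\mathbb F_p)|\gg g\log p$, the same Weil bound on $|\operatorname{Pic}^0|$ and $2^{2g}$ bound on its $2$-torsion, and degrees $d_G\asymp d_P\asymp |C(\mathbb F_p)|$ with a large absolute multiplier. This makes the logarithmically growing saving $2F_p(y,a)-F_p(2y,a)\approx a\log_p(y/2a)$ beat the $O(g)$ class-group loss; the paper simply normalizes by $|C(\mathbb F_p)|$ instead of $g$, taking $d_P=d_G=x|C(\mathbb F_p)|$.

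One small correction: passing to a subfamily cannot lower $|C(\mathbb F_p)|/g$, so your step forcing $a\lesssim\log p$ (which your choice $y=ap^{K/a}$ needs in order to have $y\gg a$) is not justified as stated. Instead, take $y=xa$ with $x$ a large absolute constant, as the paper effectively does; this needs only the lower bound $a\gg\log p$ and still gives $c\gg 1/\log p$ uniformly in $a$. Alternatively, run the construction with a subset of the rational points, for which all the lemmas hold verbatim.
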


Theorem~\ref{th-ffmain} immediately follows, since for $q$ a power of $p$, $\mathbb F_{q}((t))$ contains $\mathbb F_{p}((t))$ and so the $p$ case implies the general case.

\begin{proof} It was proven by Serre~\cite{Serre83} (but see \cite[Appendix]{Elkies2004} for the proof) that there exists an absolute constant $d>0$ such that for each finite field $\mathbb F_q$ there exists $C$ over $\mathbb F_p$ with genus $g(C)$ arbitrarily large such that \begin{equation}\label{points-on-curve-serre} \abs{C(\mathbb F_q)}  \geq d g(C) \log(q) \end{equation}

We have
\begin{equation}\label{jacobian-trivial} \abs{\operatorname{Pic}^0(C)(\mathbb F_q)} \leq (\sqrt{q}+1)^{2g} \end{equation}
by Weil's Riemann hypothesis for curves.

 Finally, we have the bound
 \begin{equation}\label{2-torsion-trivial} \abs{\operatorname{Pic}^0(C)[2](\mathbb F_q)}\leq 2^{2 g(C)}\end{equation} valid since $\operatorname{Pic}^0(C)$ is an abelian variety of dimension $g(C)$ and thus has at most $2^{2g(C)}$ two-torsion points. 

We take $q=p$ and take $d_P = d_G = x  \abs{ C(\mathbb F_p)}$ for some absolute but sufficiently large integer $x$.  We have 
\[d_P = x \abs{C(\mathbb F_q)} \geq xd g(C) \log (q) \geq xd g(C) \log 2 \geq 2g \]
for $x$ sufficiently large. 

From \eqref{ff-a}, \eqref{binomial-asymptotic}, \eqref{jacobian-trivial}, and \eqref{points-on-curve-serre} we get
\[ \log_q \abs{A}= \abs{C(\mathbb F_q)} (F_q(x,1) + x + \log_q(1-q^{-1}) +o(1))  - g(C) ( 1+ 2\log_q (\sqrt{q}+1)) \]
\[ =\abs{C(\mathbb F_q)} \left(F_q(x,1) + x + O\left( \frac{1}{\log q} \right) + o(1) \right) \]
since $2\log_q (\sqrt{q}+1)=O(1)$ and $g(c) = O ( \frac{1}{\log q} ) \abs{C(\mathbb F_q)} $  and $\log_q(1-q^{-1})= O ( \frac{1}{\log q} ) $ also.

By  \eqref{ff-a-sum}, we get
\[ \log_q \abs{A+A} \leq  2x \abs{C(\mathbb F_q)} .\]

Using \eqref{ff-a-product}, \eqref{2-torsion-trivial}, \eqref{binomial-asymptotic}, and \eqref{points-on-curve-serre}, we get
\[ \log_q \left(\frac{\abs{AA}}{\abs{A}}\right) \leq  \abs{C(\mathbb F_q)} ( F_q(2x,1) - F_q(x,1) +x+ o(1) )  + g ( 2 \log_q(2) )\]
\[ = \abs{C(\mathbb F_q)} ( F_q(2x,1) - F_q(x,1) +x+O\left(\frac{1}{\log q}\right) +o(1) ).\]

Now \[F_q(x,1) =\frac{ ( (x+1) \log(x+1)-x\log x)}{\log q} = {\log(x+1) + x \log(1+1/x)} {\log q}.\] We can choose $x$ sufficiently large that $F_q(x,1)$ is greater than the $O(\frac{1}{\log q})$ term by some positive multiple of $\frac{1}{\log q}$, in which case  $ \frac{\log_q \abs{A+A}}{\log_q \abs{A}} $ will be at most $2-c/\log q$ for some $c>0$, as desired. We have
\[ \log_q\left( \frac{ \abs{A}^2}{\abs{AA}} \right) \geq \abs{C(\mathbb F_q)} ( 2F_q(x,1) - F_q(2x,1) +O \left(\frac{1}{\log q}+o(1)\right)\]
and
\[  2F_q(x,1) - F_q(2x,1) ={ 2 \log x -\log(x+1) + 2x \log(1+1/x)- 2x \log(1+1/(2x))}{\log q}.\]
We can choose $x$ sufficiently large that $ 2F_q(x,1) - F_q(2x,1) $ is greater than the $O(\frac{1}{\log q})$ term by some positive multiple of $\frac{1}{\log q}$, in which case we have
\[ \frac{ \log_q( \frac{ \abs{A}^2}{\abs{AA}} )}{\log_q(\abs{A})}  \geq \frac{c}{\log q} \] for some $c>0$, as desired, since the denominator is $O( \abs{C(\mathbb F_q)})$. \end{proof}

Finally, we give a more optimized version of the proof of Theorem~\ref{th-ffspec} over fields of perfect square order. As promised, this argument delivers exponents close to $1.9$ for some values of $q$.
 
 \begin{theorem}\label{ff-complicated} Let $q$ be a prime power that is a perfect square. Let $a,b \in (1,2)$. Then there exist finite subsets $A \subset \mathbb F_{q}((t))$ of arbitrarily large cardinality such that $|A + A| \leq |A|^a$ and $|AA| \leq |A|^b$ as long as there exist $\beta>0$ and $\alpha> \sqrt{q}+1$ such that
 \begin{equation}\label{ff-a-condition} a >\frac{ \alpha + \beta - 1}{ F_q(\beta, \sqrt{q}-1) + \alpha + 2(\sqrt{q}-1) \log_q (1- q^{-1}) -2 } \end{equation}
 \begin{equation}\label{ff-b-condition} b > 1 + \frac{2 \log_q( 2) + F_q(2\beta, \sqrt{q}-1) - F_q(\beta, \sqrt{q}-1) + \alpha -1+ ( \sqrt{q}-1) \log_q(1-q^{-1} )  }{ F_q(\beta, \sqrt{q}-1) + \alpha +  2(\sqrt{q}-1) \log_q (1- q^{-1}) -2 }. \end{equation}
 
 If $q$ is a power of $2$, we may replace \eqref{ff-b-condition} by the weaker 
 \begin{equation}\label{ff-b-condition-char2}  b > 1 + \frac{\frac{ \log_q( 2) }{\sqrt{q}+1} + F_q(2\beta, \sqrt{q}-1) - F_q(\beta, \sqrt{q}-1) + \alpha + ( \sqrt{q}-1) \log_q(1-q^{-1} )  -1}{ F_q(\beta, \sqrt{q}-1) + \alpha +  2(\sqrt{q}-1) \log_q (1- q^{-1}) -2}.\end{equation} \end{theorem}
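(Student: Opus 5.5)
We plug the general bounds \eqref{ff-a}, \eqref{ff-a-sum} and \eqref{ff-a-product} into a family of curves with many points over $\mathbb F_q$, for $q$ a square. For $q$ a perfect square, the Ihara/Garcia--Stichtenoth towers give curves $C$ over $\mathbb F_q$ with genus $g\to\infty$ and $\abs{C(\mathbb F_q)}=(\sqrt q-1)g+o(g)$, meeting the Drinfeld--Vl\u{a}du\c{t} bound. Each such curve has a rational point, so $\mathbb F_q(C)\subseteq\mathbb F_q((t))$, and it suffices to produce $A\subseteq\mathbb F_q(C)$. We take $d_G=\beta g+o(g)$ and $d_P=\alpha g+o(g)$, integers. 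The hypothesis $\alpha>\sqrt q+1$ gives $d_P\geq 2g-1+\abs{C(\mathbb F_q)}$ for large $g$, so Lemmas~\ref{ff-rr} and~\ref{ff-product} apply.

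We then compute everything to first order in $g$. By \eqref{binomial-asymptotic} with $n=d_G$ and $m=\abs{C(\mathbb F_q)}-1$,
\[\log_q\binom{d_G+\abs{C(\mathbb F_q)}-1}{d_G}=F_q(\beta,\sqrt q-1)g+o(g),\]
and similarly $F_q(2\beta,\sqrt q-1)g+o(g)$ for the $2d_G$ coefficient. For the class number we use \eqref{jacobian-trivial}, or better the bound $\abs{\operatorname{Pic}^0(C)(\mathbb F_q)}\leq q^{g+o(g)}$ available in optimal towers. For the $2$-torsion we use $2^{2g}$, i.e.\ $2g\log_q 2$.

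With these inputs, \eqref{ff-a} should give
\[\log_q\abs{A}\geq g\brac{F_q(\beta,\sqrt q-1)+\alpha+1-1+(\sqrt q-1)\log_q(1-q^{-1})-1}+o(g),\]
and \eqref{ff-a-sum} gives $\log_q\abs{A+A}\leq g(\alpha+\beta-1)+o(g)$. Dividing yields \eqref{ff-a-condition}, after reconciling the exact $\log_q(1-q^{-1})$ and $-2$ terms in the denominator. The discrepancy between my $(\sqrt q-1)$ coefficient and the stated $2(\sqrt q-1)$, and between my $-1$ and the stated $-2$, presumably comes from a cruder class number bound; I would track it honestly. For the product set, $\log_q(\abs{AA}/\abs{A})$ from \eqref{ff-a-product} is
\[g\brac{2\log_q2+F_q(2\beta,\sqrt q-1)-F_q(\beta,\sqrt q-1)+\alpha+1-1+(\sqrt q-1)\log_q(1-q^{-1})}.\]
Dividing by $\log_q\abs{A}$ gives \eqref{ff-b-condition}, and strict inequalities absorb the $o(g)$ terms for $g$ large.

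The main obstacle is controlling $\abs{\operatorname{Pic}^0}$ and $\abs{\operatorname{Pic}^0[2]}$ well enough. The denominators in the stated conditions suggest a bound of the form $\abs{\operatorname{Pic}^0(C)(\mathbb F_q)}\leq q^{g}\cdot(1-q^{-1})^{-(\sqrt q-1)g}$ or similar, coming from the zeta function of an asymptotically optimal curve. I would derive it from the explicit formula $\abs{\operatorname{Pic}^0}=\prod_i(1-\omega_i)$ together with the Tsfasman--Vl\u{a}du\c{t} limit distribution of Frobenius eigenvalues. In characteristic $2$ the $2$-torsion is far smaller: it is governed by the $p$-rank, at most $g$, and for the tower it is small. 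The extra bound $\abs{\operatorname{Pic}^0[2]}\leq 2^{g/(\sqrt q+1)}$ giving \eqref{ff-b-condition-char2} should follow from the Deuring--Shafarevich formula for the $2$-rank along the tower, since the tower's ramification is concentrated at few points. That is the step I would verify most carefully.
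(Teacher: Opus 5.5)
Your architecture is the paper's: an Ihara/Tsfasman--Vl\u{a}du\c{t}--Zink sequence with $\abs{C(\mathbb F_q)}/g\to\sqrt q-1$, $d_P\approx\alpha g$ and $d_G\approx\beta g$, then \eqref{ff-a}, \eqref{ff-a-sum} and \eqref{ff-a-product} evaluated with \eqref{binomial-asymptotic} and the $2^{2g}$ bound on $2$-torsion. However, the one non-formal input, the class number, is handled incorrectly. As written, your computation yields neither \eqref{ff-a-condition} nor \eqref{ff-b-condition}.

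\textbf{The class number bound is false.} The bound $\abs{\operatorname{Pic}^0(C)(\mathbb F_q)}\leq q^{g+o(g)}$ fails for optimal towers. By the functional equation, $h=L(1)=q^{g}L(q^{-1})$, with $\ln L(q^{-1})=\sum_{m\geq1}\frac{N_m-q^m-1}{mq^m}$. In a sequence attaining Drinfeld--Vl\u{a}du\c{t} one has $B_d=o(g)$ for $d\geq 2$, so $N_m/g\to\sqrt q-1$ for each fixed $m$. Since the $m$-th term is $O(gq^{-m/2}/m)$, this gives
\[\frac1g\log_q h\to 1+(\sqrt q-1)\log_q\frac{q}{q-1}=1-(\sqrt q-1)\log_q(1-q^{-1})>1.\]
This is exactly the Rosenbloom--Tsfasman limit \eqref{jacobian-limit} that the paper uses. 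So the mismatch you noticed does not come from the paper using a cruder bound; your bound is the wrong one, and the stated denominators come from the true asymptotic.

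Falling back on \eqref{jacobian-trivial} does not help. For every $q\geq 4$ we have $2\log_q(\sqrt q+1)>1-(\sqrt q-1)\log_q(1-q^{-1})$, so the Weil bound would force strictly stronger hypotheses than those stated.

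\textbf{There is also an arithmetic slip.} The factor $q^{d_P+1-g}$ contributes $(\alpha-1)g+O(1)$, not $\alpha g$. Your `$\alpha+1-1$' counts the constant $1$ at order $g$. This appears both in your bound for $\log_q\abs{A}$ and in your product-set numerator, so your ratio for $b$ does not match \eqref{ff-b-condition} either.

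\textbf{The repair.} Use the bound you guessed in your final paragraph:
\[\log_q\abs{\operatorname{Pic}^0(C)(\mathbb F_q)}\leq g\left(1-(\sqrt q-1)\log_q(1-q^{-1})\right)+o(g).\]
Only this upper bound is needed, and only in the lower bound for $\abs{A}$, since \eqref{ff-a-product} involves just the $2$-torsion. With it and the corrected $\alpha-1$, \eqref{ff-a} gives
\[\frac1g\log_q\abs{A}\geq F_q(\beta,\sqrt q-1)+\alpha+2(\sqrt q-1)\log_q(1-q^{-1})-2+o(1).\]
One copy of $(\sqrt q-1)\log_q(1-q^{-1})$ comes from $(1-q^{-1})^{\abs{C(\mathbb F_q)}}$ and one from the class number. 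Likewise, \eqref{ff-a-product} gives
\[\frac1g\log_q(\abs{AA}/\abs{A})\leq 2\log_q2+F_q(2\beta,\sqrt q-1)-F_q(\beta,\sqrt q-1)+\alpha-1+(\sqrt q-1)\log_q(1-q^{-1})+o(1).\]
These are precisely the expressions in the statement.

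Before dividing, you should also note that the denominator is positive. It exceeds $(\sqrt q-1)(1+2\log_q(1-q^{-1}))>0$, using $\alpha>\sqrt q+1$ and $\log_q(1-q^{-1})\geq\log_4(3/4)>-\tfrac12$.

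Your characteristic-$2$ plan, bounding the $2$-rank along the Garcia--Stichtenoth tower via Deuring--Shafarevich, is essentially how Cascudo, Cramer and Xing obtain \eqref{2-torsion-char2-bound}. The paper simply cites that result, so this part is fine.
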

 
 \begin{proof} Note that under the assumptions, the denominator $F_q(\beta, \sqrt{q}-1) + \alpha + 2(\sqrt{q}-1) \log_q (1- q^{-1}) -2 $ is always at least $\sqrt{q}-1 + 2(\sqrt{q}-1) \log_q(1-q^{-1})$ and thus always positive since $\log_q(1-q^{-1}) \geq \log_4(3/4) > -\frac{1}{2}$ as $q\geq 4$.
 
 We choose a sequence of curves $C_i$ with $g_i$ tending to $\infty$ and \begin{equation}\label{points-on-curve-limit}\lim_{i \to \infty} \frac{ |C_i(\mathbb F_q)|}{g(C_i)} = \sqrt{q}-1.\end{equation} That such a sequence exists for $q$ a perfect square was proven independently by Ihara~\cite{Ih82} and by Tsfasman, Vl\u{a}du\c{t}, and Zink~\cite{Tsfasman1982}. That this is optimal was proven by Drinfeld and Vl\u{a}du\c{t}~\cite{Vladut1983}.  For such a sequence, the limit
 \begin{equation}\label{jacobian-limit} \lim_{i \to \infty} \frac{\log_q  |\operatorname{Pic}^0(C_i)(\mathbb F_q)|}{g(C_i)} = 1- ( \sqrt{q}-1) \log_q (1- q^{-1})\end{equation}
 was established by Rosenbloom and Tsfasman~\cite[Lemma A.2]{Rosenbloom1990}.

 We choose $d_{P_i} = \lceil \alpha g(C_i) \rceil$ and $d_{G_i} = \lceil \beta g(C_i) \rceil$ and construct a set $A_i =P_iG_i$ as described above. We have $A_i \subseteq \mathbb F_q(C_i)$. For $i$ sufficiently large, we can embed $\mathbb F_q(C_i)$ into $\mathbb F_q((t))$ using any rational point of $C_i$, of which there are many by \eqref{points-on-curve-limit}, so $A_i$ will indeed define a subset of $\mathbb F_q((t))$.
 
 We have $d_{P_i} \geq 2g(C_I)-1+ |C_i(\mathbb F_q)| $ for $i$ sufficiently large since  $\alpha> \sqrt{q}+1$.
 
 From \eqref{ff-a}, \eqref{binomial-asymptotic}, \eqref{points-on-curve-limit}, and \eqref{jacobian-limit}, we have that
\[ \frac{ \log_q( |A_i|)}{g(C_i)} \geq  F_q(\beta, \sqrt{q}-1) + \alpha -1 + (\sqrt{q}-1) \log_q (1- q^{-1})  - 1 + (\sqrt{q}-1) \log_q(1-q^{-1})+o(1) \] \[= F_q(\beta, \sqrt{q}-1) + \alpha +2(\sqrt{q}-1) \log_q (1- q^{-1}) -2+o(1) .  \]

From \eqref{ff-a-sum} we see that
\[ \frac{\log_q( |A_i+A_i|)}{g(C_i)} \leq \alpha + \beta -1+o(1). \]
From \eqref{ff-a-product}, \eqref{2-torsion-trivial}, \eqref{binomial-asymptotic}, and \eqref{points-on-curve-limit}, we see that
\[ \frac{\log_q( |A_iA_i|) -\log_q(|A_i|)}{g(C_i)} \] \[\leq 2 \log_q(2) + F_q(2\beta, \sqrt{q}-1) + \alpha -1 +  (\sqrt{q}-1) \log_q(1-q^{-1})  - F_q(\beta, \sqrt{q}-1)+o(1).\]

From these and \eqref{ff-a-condition} it follows that $|A_i+A_i|\leq |A_i|^a$ for $i$ sufficiently large, and from these and \eqref{ff-b-condition} it follows that $|A_i A_i| \leq |A_i|^b$ for $i$ sufficiently large.

Finally, in the case when $q$ is a power of $2$,  there exists a sequence $C_i$ satisfying \eqref{points-on-curve-limit} and thus \eqref{jacobian-limit} but also
\begin{equation}\label{2-torsion-char2-bound} \lim_{i \to \infty} \frac{\log_q  |\operatorname{Pic}^0(C_i)[2](\mathbb F_q)|}{g(C_i)} = \frac{\log_q(2)}{\sqrt{q}+1} \end{equation} was proven by Cascudo, Cramer, and Xing~\cite[Theorem 2.3(iii)]{CCX}. Replacing \eqref{2-torsion-trivial} with \eqref{2-torsion-char2-bound} in the above argument, we obtain the same conclusion under \eqref{ff-b-condition-char2}. \end{proof}

 In small characteristic, one can obtain explicit exponents close to $1.9$, as long as the finite field size is large enough. For example, if $q=1024$ we can take $a=b=1.906$ since we may take $\alpha=33.01$ and $\beta=40.53$. For $q=41^2$ we can take $a=1.910$ and $b=1.912$ since we may take $\alpha=42.01$ and $b=51.5$.
 
 Over very small finite fields, the exponents are slightly worse. For example, if $q=4$ we can take  $ a=1.939$ and $b=1.941$ since we may take $\alpha=10.75$ and $\beta=11.25$. If $q=9$ we can take $a=1.964$ and $b=1.972$ since we may take $\alpha=11.5$ and $\beta=13$.

\bibliographystyle{plain}
\bibliography{SPCrefs}

\end{document}